\newtheorem{theoremA}{Theorem}
\newtheorem{corollaryA}[theoremA]{Corollary}
\newtheorem{propositionA}[theoremA]{Proposition}
\newtheorem{lemmaA}[theoremA]{Lemma}
\newtheorem{theorem}{Theorem}[section]
\newtheorem{corollary}[theorem]{Corollary}
\newtheorem{definition}[theorem]{Definition}
\newtheorem{example}[theorem]{Example}
\newtheorem{lemma}[theorem]{Lemma}
\newtheorem{notation}[theorem]{Notation}
\newtheorem{problem}{Problem}
\newtheorem{proposition}[theorem]{Proposition}
\newtheorem{remark}[theorem]{Remark}
\newcommand{\inte}{\mathrm{int}}
\newcommand{\Hess}{\operatorname{Hess}}
\newcommand{\Lip}{\operatorname{Lip}}
\newcommand{\rr}{\mathbb{R}}
\newcommand{\sph}{\mathbb{S}}
\newcommand{\zz}{\mathbb{Z}}
\newcommand{\nn}{\mathbb{N}}
\newcommand{\sect}{\operatorname{Sect}}
\newcommand{\ric}{\operatorname{Ric}}
\newcommand{\vol}{\mathrm{vol}}
\newcommand{\diam}{\mathrm{diam}}
\newcommand{\scal}{\operatorname{Scal}}
\newcommand{\curv}{\mathrm{Curv}}
\newcommand{\bbb}{{\mathbb{B}}}
\renewcommand{\S}{{\mathbb{S}}}
\newcommand{\R}{\mathbb{R}}
\newcommand{\N}{\mathbb{N}}
\newcommand{\pM}{{\partial M}}
\renewcommand{\a}{{\alpha}}
\renewcommand{\b}{{\beta}}
\newcommand{\g}{{\gamma}}
\newcommand{\e}{{\epsilon}}
\newcommand{\s}{{\sigma}}
\renewcommand{\l}{{\lambda}}
\newcommand{\II}{\mathrm{II}}
\begin{document}

\title{The smooth Riemannian extension problem}
\author{Stefano Pigola}
\address{Universit\`a dell'Insubria, Dipartimento di Scienza e Alta Tecnologia\\
Via Valleggio 11, 22100 Como, Italy} \email{stefano.pigola@uninsubria.it}
\author{Giona Veronelli}
\address{Universit\'e Paris 13, Sorbonne Paris Cit\'e, LAGA, CNRS ( UMR 7539)
99\\
avenue Jean-Baptiste Cl\'ement F-93430 Villetaneuse - FRANCE} \email{veronelli@math.univ-paris13.fr}
\date{\today}

\begin{abstract}
Given a metrically complete Riemannian manifold $(M,g)$ with smooth boundary $\partial M \not=\emptyset$ and assuming that one of its curvatures is subject to a certain bound, we address the problem of whether it is possibile to realize $(M,g)$ as a domain inside a geodesically complete Riemannian manifold $(\bar M,\bar g)$ without boundary, by preserving the same curvature bounds. In this direction we provide three kind of results: (1) a general existence theorem showing that it is always possible to obtain a geodesically complete Riemannian extension without curvature constraints; (2) various topological obstructions to the existence of a complete Riemannian extension with prescribed sectional and Ricci curvature bounds; (3) some existence results of complete Riemannian extensions with sectional and Ricci curvature bounds, mostly in the presence of a convexity condition on the boundary.
\end{abstract}

\maketitle

\tableofcontents

\section*{Introduction}
Let $(M,g)$ be a given Riemannian manifold with smooth (possibly non-compact) boundary $\partial M \not = \emptyset$. This means that the Riemannian metric $g$ is a smooth, positive definite symmetric tensor up to the boundary points. Assume that $(M,g)$ is subject to some constraint on one of its Riemannian invariants, such as a curvature (or a volume growth) bound. The general problem we are interested in consists in understanding when, and to what extent, the original manifold $M$ can be prolonged past its boundary in order to obtain a new smooth Riemannian manifold $(M' ,  g')$, this time without boundary, such that one of the invariants alluded to above is kept controlled. Clearly, the most interesting situation occurs when the extended metric can be taken to be geodesically complete. In this case we can speak of $(M',  g')$ as a \textit{complete Riemannian extension of $(M,g)$ with controlled Riemannian invariants}. First insights into the possibility of constructing a complete prolongation were given by S. Alexander and R. Bishop in \cite {AB}. Actually, this paper is mostly focused on the prolongation of open manifolds without boundary, but it contains useful information also in the boundary case. The existence of a smooth extension, via gluing techniques, of compact manifolds with a strict Ricci curvature lower bound and a convexity condition on the boundary follows from work by G. Perelman, \cite{Per, Wan}. See Section \ref{section-existence-convex} below. Extensions of compact manifolds with non-negative scalar curvature up to the mean convex boundary are contained in \cite{R}. The extended metric is just $C^{2}$ but this is (abundantly) enough to get interesting rigidity results based on the positive mass theorem. Very recently, \cite{AMW}, a gluing technique in the spirit of \cite{Per} has been applied to prove that the space of metrics with non-negative Ricci curvature and convex boundary on the Euclidean three ball is path connected. In a somewhat different direction, gluing methods have been also employed by J. Wong, \cite{Won} in order to obtain isometric extensions with totally geodesic boundary and a metric-curvature lower bound in the sense of Alexandrov. This has applications to Gromov-Hausdorff precompactness results and volume growth estimates.\smallskip

In view of the well known relations between the topology of a complete Riemannian manifold and the bounds on its curvatures, or its volume growth, we are naturally led to guess that some topological obstruction appears somewhere in the extension process. In this direction, it is important to verify whether some of these obstructions are encoded in the original piece with boundary and this requires, first, a phenomenological investigation over concrete examples. For instance, a complete extension with non-negative Ricci curvature should be forbidden in general. In this respect note that the topology of a compact manifold with convex boundary and positive Ricci curvature cannot be too much wide and this is compatible with the positive results we have mentioned above; see Part \ref{section-nonexistence}. Topological obstructions should also appear at the level of upper sectional curvature bounds. Think for instance to the Cartan-Hadamard theorem, valid in the setting of geodesic metric spaces. The possibility of extending a complete simply connected manifold with boundary and negative curvature $K<0$ to a complete manifold with sectional curvature controlled by $K+\epsilon$ was addressed by S. Alexander, D. Berg and R. Bishop, \cite[p. 705]{ABB-TAMS}, during their investigations on isoperimetric properties under the assumption that the boundary has negative curvature on its concave sections. We are grateful to S. Alexander for pointing out this reference. In sharp contrast, in view of J. Lohkamp insights, \cite{Loh-Annals2}, it is expected that an upper Ricci curvature bound imposes no restrictions at all. 
\smallskip

This very brief and informal discussion serves to outline a major project concerning the systematic investigation around the  \textit{Riemannian extension problem}.
\begin{definition}
 Let $(M,g)$ be a smooth $m$-dimensional Riemannian manifold with possibly nonempty boundary. A Riemannian extension of $(M,g)$ is any smooth $m$-dimensional Riemannian manifold $(M',g')$ with possibly non-empty boundary such that $M$ is isometrically embedded in $M'$.
\end{definition}
Roughly speaking, this project could be articulated in the following problems that represent (some of) the basic steps towards a suitable understanding of the subject.

\begin{problem}[completeness]\label{problem-completeness}
Let $(M,g)$ be a metrically complete Riemannian manifold with smooth boundary $\partial M \not=\emptyset$. Does there exist a geodesically complete Riemannian extension $(M',g')$ of $M$ with $\partial M'= \emptyset$?
\end{problem}

\begin{problem}[curvature constraints]\label{problem-curvature}
 Let $\curv$ denote either of the curvatures $\sect$, $\ric$ or $\scal$ and let $C\in\R$. Let $(M,g)$ be a smooth $m$-dimensional, (non-nencessarily complete) Riemannian manifold with smooth boundary $\partial M \not=\emptyset$ satisfying $\curv_{g} < C$ (resp. $\leq C$, $>C$ or $\geq C$). Does there exist a complete, $m$-dimensional Riemannian extension $(M' ,g')$ with $\partial M' = \emptyset$ and such that the same curvature constraint holds?
\end{problem}

In the present paper we start the investigation along the lines of both these problems by presenting positive answer and obstruction results. More precisely:\smallskip

\noindent - In Part 1, we give a complete answer to Problem \ref{problem-completeness} by showing that every complete Riemannian manifold with boundary can be extended to a geodesically complete Riemannian manifold without boundary by means of a very general gluing procedure.\smallskip

In Part 2 and Part 3 we attack Problem 2 by providing both topological obstructions and existence theorems under various curvature bounds. More precisely:\smallskip

\noindent - In Part 2 we provide topological obstructions to the existence of  complete extensions satisfying $\ric \geq C$ and $\sect \leq 0$. The former are based on \v Svarc-Milnor and harmonic mappings arguments whereas the latter are obtained using both homological and homotopical methods.\smallskip

\noindent - Part 3 is devoted to the existence of complete extensions with $\ric < C$, without any assumption on the boundary, the existence of complete extensions with $\sect < C^2$ when the boundary is compact, and, in case of a compact convex boundary, existence of complete extensions under the conditions $\ric >0$, $\operatorname{Scal} >0$ and $\sect <0$.

\part{Existence of complete Riemannian extensions}
The main result of the present part of the paper states that a complete Riemannian extension can be always obtained with an amazing flexibility on the diffeomorphic class of the added piece. This is the content of the following very general theorem.

\begin{theoremA}\label{theorem-extension}
Let $(M,g_{M})$ be an $m$-dimensional connected Riemannian manifold with  smooth boundary $\pM \not= \emptyset$. Let $Q$ be any smooth $m$-dimensional differentiable manifold whose nonempty boundary $\partial Q$ is diffeomorphic to $\pM$. Then, there exists a Riemannian extension $(N,g_{N})$ of $(M,g_{M})$ such that $N\setminus M$ is diffeomorphic to the interior of $Q$. Moreover, if $(M,g_{M})$ is complete, then the extension $(N,g_{N})$ can be constructed to be complete.
\end{theoremA}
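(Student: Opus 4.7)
The plan is to carry out the proof in three stages: a topological gluing, a smooth extension of $g_M$ across $\partial M$, and a conformal modification on the added piece to enforce completeness. For the topological step, I would fix a diffeomorphism $\phi \colon \partial Q \to \partial M$ and, using the collar neighborhood theorem, choose collars $\partial M \times [0,\e) \hookrightarrow M$ and $\partial Q \times [0,\e) \hookrightarrow Q$. The quotient
\begin{equation*}
N := (M \sqcup Q)/\sim,\quad (\phi(x),0) \sim (x,0),
\end{equation*}
then inherits from the paired collars a smooth structure without boundary, in which $M$ sits as a codimension-$0$ submanifold and $N \setminus M$ is diffeomorphic to $\inte Q$.

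Next, I would extend the metric $g_M$ to a smooth Riemannian metric $g_N$ on $N$ keeping $g_N|_M = g_M$. By a Seeley--Whitney-type extension theorem for smooth sections of tensor bundles, $g_M$ extends to a smooth symmetric $(0,2)$-tensor $\tilde g$ on some open neighborhood $V$ of $M$ in $N$; after shrinking $V$, continuity guarantees that $\tilde g$ remains positive definite. Let $h$ be any smooth Riemannian metric on $N$ (produced by a partition of unity over coordinate charts) and let $\{\chi_V,\chi_Q\}$ be a partition of unity subordinate to the open cover $\{V, N\setminus M\}$. Setting
\begin{equation*}
g_N := \chi_V\, \tilde g + \chi_Q\, h,
\end{equation*}
one obtains a smooth Riemannian metric on $N$, since a convex combination of positive definite tensors is positive definite. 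Because $\supp\chi_Q \subset N\setminus M$, on $M$ we have $\chi_V \equiv 1$ and $\chi_Q \equiv 0$, hence $g_N|_M = g_M$ and $(M,g_M) \hookrightarrow (N,g_N)$ is an isometric embedding.

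For the completeness statement, assume $(M,g_M)$ is metrically complete. I would then modify $g_N$ conformally, by a factor equal to $1$ on $M$, so that the resulting metric becomes geodesically complete. Fix a compact exhaustion $K_0 \Subset K_1 \Subset K_2 \Subset \dots$ of $Q$ with $\partial Q \subset \inte K_0$ and build a smooth function $f\colon N \to [1,\infty)$ with $f \equiv 1$ on $M \cup K_0$ and $f \geq n$ on $Q \setminus K_{n-1}$ for all $n \geq 1$. Replacing $g_N$ by $f^2 g_N$ does not alter the isometric embedding of $M$, and the growth of $f$ forces every curve in $Q$ that leaves every compact set to have infinite length, so that every Cauchy sequence in $(N, f^2 g_N)$ confined to $Q$ subconverges to a point of $Q$.

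The main technical point lies in this last step: a priori, a Cauchy sequence living in $M$ for the enlarged metric could fail to $g_M$-converge by exploiting arbitrarily short detours through $Q$. Because $f \equiv 1$ on $M$ but grows rapidly as one moves into $Q$, any such detour accrues $f^2 g_N$-length that eventually controls its $g_N$-depth inside $Q$; following the Nomizu--Ozeki completion scheme, one can tune the growth of $f$ along the exhaustion so that no detour is ever shorter than the corresponding $g_M$-path up to a fixed multiplicative constant, reducing to $g_M$-completeness to conclude convergence. This yields the desired complete Riemannian extension with $N\setminus M$ diffeomorphic to $\inte Q$.
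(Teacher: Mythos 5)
Your proposal matches the paper's overall strategy: glue along collars, extend $g_M$ smoothly across $\partial M$ using partitions of unity, then modify conformally on the added piece while keeping the factor equal to $1$ on $M$. The topological gluing and metric extension steps are essentially the same as the paper's. You also correctly pinpoint the key difficulty for completeness: a divergent path in the extended manifold could alternate between $M$ and $Q$, exploiting short detours through $Q$.

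However, your resolution of that difficulty has genuine gaps. First, choosing a compact exhaustion $\{K_n\}$ of $Q$ with $\partial Q\subset\inte K_0$ and $K_0$ compact implicitly assumes $\partial Q$ (hence $\partial M$) is compact, which the theorem does not. Second, and more fundamentally, by freezing $f\equiv 1$ on all of $M\cup K_0$, you have decided a priori that the conformal deformation does nothing on a fixed full neighborhood of $\partial M$. Any shortcut that stays inside $K_0$ therefore has the same length in $f^2 g_N$ as in $g_N$, and no ``tuning of the growth of $f$'' on $Q\setminus K_0$ can affect it. When $\partial M$ is noncompact there is no reason such shortcuts should be uniformly comparable to $g_M$-distances, so the final claim that ``no detour is ever shorter than the corresponding $g_M$-path up to a fixed multiplicative constant'' is precisely what needs to be proved and is not established by the construction. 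The Nomizu--Ozeki completion mechanism does not help directly, since it does not respect the constraint that $f\equiv 1$ on $M$. (A minor further point: the condition $f\ge n$ on $Q\setminus K_{n-1}$ does not by itself force divergent paths in $Q$ to have infinite length; $f$ must grow relative to the widths $d_{g_N}(\partial K_n,\partial K_{n+1})$, not merely the index.)

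The paper handles the shortcut issue with an intermediate, quantitative step that has no analogue in your proposal. In Lemma~\ref{lemma-extension-1} it produces a \emph{thin} collar $\mathcal X_Q$ beyond $\partial M$, with width depending on the boundary point, and a $(1+\epsilon)$-Lipschitz reflection $\rho\colon P:=M\cup\mathcal X_Q\to M$. This bounds the cost of any shortcut through the thin collar, and Lemma~\ref{lemma-extension-2} deduces that $P$ inherits completeness from $M$. Only then is the final conformal factor built (Lemma~\ref{lem_metric}), calibrated by the quantities $q_1^{j,a}$ and $q_2^{j,b}$ so that any Lipschitz excursion out of $P$ is at least as long as the $d_P$-distance between its endpoints; the ``alternating path'' case is then disposed of by surgering each excursion into a path inside $P$ and invoking completeness of $P$. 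To make your approach rigorous you would need an analogous quantitative comparison near $\partial M$, which your choice of $f\equiv 1$ on a full neighborhood of $\partial M$ rules out.
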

In particular, by choosing $Q=M$ in the previous statement, with the trivial identification of the boundaries, we get
\begin{corollaryA}\label{th_extension}
Let $(M,g_M)$ be a smooth complete, $m$-dimensional Riemannian manifold with smooth nonempty boundary $\partial M$. Then, there exists a geodesically complete Riemannian extension $(N,g_{N})$ of $(M,g_{M})$ with $\partial N = \emptyset$. 
\end{corollaryA}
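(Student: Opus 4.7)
The plan is to deduce Corollary A as an immediate specialization of Theorem A. Specifically, I would set $Q = M$ and use the identity of $\partial M$ as the boundary diffeomorphism $\partial Q \to \partial M$. The hypotheses of Theorem A are satisfied: $M$ itself is a smooth $m$-dimensional manifold whose nonempty boundary is trivially diffeomorphic to $\partial M$.

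Applying Theorem A then yields a Riemannian extension $(N, g_N)$ of $(M, g_M)$ with $N \setminus M$ diffeomorphic to the interior of $Q = M$. Since $(M, g_M)$ is assumed complete, the \emph{moreover} clause of Theorem A permits choosing $(N, g_N)$ to be geodesically complete. The one remaining point to verify is that $\partial N = \emptyset$.

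For this, one observes that topologically $N$ is obtained by attaching the copy $Q = M$ to $M$ along the identification $\partial Q \cong \partial M$; in other words, $N$ is the topological double of $M$. Any $p \in \partial M \subset N$ then admits a neighborhood in $N$ obtained by welding two half-ball neighborhoods (one from each side of $\partial M$) into an open ball in $\R^m$, so $p$ is an interior point of $N$. Every other point of $N$ lies in $\inte M$ or in $\inte Q$, hence $\partial N = \emptyset$.

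The main (in fact, only) obstacle resides in Theorem A itself; the corollary is a purely formal consequence, with nothing further to do beyond unpacking definitions and checking that the trivial gluing removes all boundary.
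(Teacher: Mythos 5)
Your proposal is correct and matches the paper's own derivation: the paper obtains Corollary~\ref{th_extension} precisely by taking $Q=M$ in Theorem~\ref{theorem-extension} with the trivial boundary identification, and your extra observation that the resulting double has empty boundary is an accurate (if in the paper implicit) unpacking of the gluing construction.
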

These results are then applied in several directions. First, we observe that a given compact Riemannian manifold with boundary subject to strict curvature bounds  can be extended to a possibly incomplete Riemannian manifold with the same curvature constraints (regardless of any restriction on the geometry of the boundary); see Corollary \ref{corollary-localcurvature}. Next, we prove a density result \`a la Meyers-Serrin concerning first order Sobolev spaces on complete manifolds with boundary; see Corollary \ref{corollary-density}. Finally, as a direct consequence of Nash theorem, we observe that a complete Riemannian manifold with boundary has a proper isometric embedding into a Euclidean space; see Corollary \ref{corollary-nash}.

\section{The general gluing-deformation construction}
In this section we prove Theorem \ref{theorem-extension}. The manifolds $M$ and $Q$ are glued along the diffeomorphic boundaries and, using this ambient space, the original metric of $M$ is readily extended. At this point, the complete Riemannian extension is obtained via a careful conformal deformation. The proof that the deformed metric is actually complete relies on metric-space arguments.

\subsection{Preliminaries on metric spaces}

Given a metric space $(X,d)$, a continuous path $\gamma : [a,b] \to X$ is rectifiable if
\[
L_{d}(\gamma) := \sup \sum_{i=1}^{n} d(\gamma(t_{i-1}), \gamma(t_{i})) < +\infty
\]
where the supremum is taken with respect to all the finite partitions $t_{0}=a < t_{1} < \cdots < t_{n}=b$ of the interval $[a,b]$. In this case, the number $L_{d}(\gamma)$ is the metric-length of $\gamma$ and it is invariant by reparametrizations of the curve. On the metric space $(X,d)$ it is defined a length-distance given by
\[
d_{L} (x ,y) = \inf L_{d}(\gamma)
\]
the infimum being taken with respect to all rectifiable paths (if any) connecting $x$ to $y$. Observe that Lipschitz paths are trivially rectifiable and, conversely, every rectifiable path can be reparametrized to a constant speed, hence Lipschitz, path \cite[Proposition 2.5.9]{BBI}.
The metric space $(X,d)$ is a length metric space if $d = d_{L}$.\smallskip

Let $(M,g_M)$ be a smooth Riemannian manifold with (possibly empty) boundary $\partial M$. Its intrinsic distance, which is defined as the infimum of  the Riemannian lengths of piecewise $C^{1}$ paths connecting two given points, is denoted by $d_{(M,g_M)}$. It is well known that the metric space $(M,d_{(M,g_M)})$ is a length metric space. The Riemannian manifold $(M,g_M)$ is said to be complete if $(M,d_{(M,g_M)})$ is a complete metric space. Since $M$ is locally compact, the length-metric version of the Hopf-Rinow theorem, \cite[Theorem 2.5.28]{BBI} and Theorem \ref{th_HR} below, implies that the metric completeness of $M$ is equivalent to the Heine-Borel property which, in turn, is equivalent to the fact that a geodesic path $\gamma: [a,b) \to M$ extends continuously to the endpoint $b$. Here, by a geodesic, we mean a locally minimizing Lipschitz path. It is well known that it is $C^{1}$ regular, \cite{AA, ABB-Illinois}.\smallskip

A further notion of completeness that turns out to be very useful in applications involves the length of divergent paths. This characterization will be used to show that the glued manifold constructed in the next section is complete.

\begin{definition}
 Let $(X,d)$ be a metric space (e.g. a Riemannian manifold with possibily non-empty boundary with its intrinsic metric). A continuous path $\gamma : [a, b) \to X$ is said to be a divergent path if, for every compact set $K \subset M$, there exists $a\leq T <b$ such that $\gamma(t) \not \in K$ for every $T \leq t < b$.  The metric space $(X,d)$ is called ``divergent paths complete'' (or complete with respect to divergent paths) if every locally Lipschitz divergent path $\gamma : [0,1) \to X$ has infinite length where, clearly, $L_{d}(\gamma) = \lim_{\delta \to 1} L_{d}(\gamma|_{[0,\delta]})$.
\end{definition}

It is well known that for a manifold without boundary, the notions of metric (hence geodesic) completeness and of divergent paths completeness are equivalent.
Let us point out that a similar equivalence holds more generally on a locally compact length space
hence, in particular, for manifolds with smooth boundaries. Namely, we have the following

\begin{theorem}[Hopf-Rinow]\label{th_HR}
Let $(X,d)$ be a locally compact length space. The following assertions are equivalent.
\begin{enumerate}
\item $(X,d)$ is metrically complete, i.e. it is complete as a metric space.
\item $(X,d)$ satisfies the Heine-Borel property, i.e. every closed metric ball in $X$ is compact.
\item $(X,d)$ is geodesically complete, i.e. every constant speed geodesic $\gamma:[0,a)\to X$ can be extended to a continuous path $\bar\gamma:[0,a]\to X$
\item Every Lipschitz path $\gamma:[0,a)\to X$ can be extended to a continuous path $\bar\gamma:[0,a]\to X$
\item $(X,d)$ is divergent paths complete, i.e. every locally Lipschitz divergent path $\gamma:[0,a)\to X$ has infinite length.
\end{enumerate}
\end{theorem}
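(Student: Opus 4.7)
The equivalence (1) $\Longleftrightarrow$ (2) is the classical length-space Hopf--Rinow theorem \cite[Theorem 2.5.28]{BBI}, which I would take as given. My plan is to close the circle via (2) $\Longrightarrow$ (3) $\Longrightarrow$ (2), together with (2) $\Longrightarrow$ (4) $\Longrightarrow$ (5) $\Longrightarrow$ (1). The implications (2) $\Longrightarrow$ (3) and (2) $\Longrightarrow$ (4) are immediate: a constant speed geodesic, or any Lipschitz path $\gamma : [0,a) \to X$ with Lipschitz constant $c$, has image in $\bar{B}(\gamma(0), ca)$, compact by Heine--Borel, and the Cauchy property inherited from the Lipschitz bound yields a unique limit in $X$ as $t \to a^-$, defining the extension.

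For (4) $\Longrightarrow$ (5), I would argue by contradiction: a locally Lipschitz divergent path $\gamma : [0,a) \to X$ of finite length reparametrizes by arclength to a $1$-Lipschitz path $\tilde{\gamma} : [0,L) \to X$, which by (4) extends continuously to some $\tilde{\gamma}(L) \in X$; local compactness then provides a compact neighborhood of $\tilde{\gamma}(L)$ from which $\tilde{\gamma}$ cannot escape for $t$ near $L$, contradicting divergence. For (5) $\Longrightarrow$ (1), given a Cauchy sequence $\{x_n\}$ I pass to a subsequence with $d(x_n, x_{n+1}) \leq 2^{-n}$ and use the length-space structure to concatenate almost-minimizing paths of length at most $2^{-n+1}$ from $x_n$ to $x_{n+1}$ into a single rectifiable path of total length at most $4$. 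If $\{x_n\}$ did not converge, this path reparametrized by arclength would be locally Lipschitz, divergent, and of finite length, contradicting (5); the divergence uses that any compact $K$ meeting the image near the endpoint would yield an accumulation point of $\{x_n\}$ and hence, by the Cauchy property, a limit.

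The main obstacle is the implication (3) $\Longrightarrow$ (2). Fix $p \in X$ and set $R^* = \sup\{r > 0 : \bar{B}(p,r) \text{ is compact}\}$, which is positive by local compactness; I argue by contradiction that $R^* = \infty$. Given a sequence $x_n \in \bar{B}(p, R^*)$, I choose unit-speed almost-minimizing paths $\gamma_n : [0,L_n] \to X$ from $p$ to $x_n$ with $L_n \leq d(p,x_n) + 1/n$. If $L_n \leq R^* - \delta$ along a subsequence, then $x_n \in \bar{B}(p, R^* - \delta)$, which is compact by the definition of $R^*$, yielding a convergent subsequence at once. Otherwise $L_n \to R^*$; for each rational $q \in (0, R^*)$ the points $\gamma_n(q)$, defined once $L_n > q$, lie in a compact ball $\bar{B}(p, r_q)$ with $q < r_q < R^*$, and a diagonal extraction combined with the equi-Lipschitz property yields a $1$-Lipschitz limit path $\gamma : [0, R^*) \to X$. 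The almost-minimizing condition passes to the limit, forcing $\gamma$ to be a minimizing unit-speed geodesic. By (3), $\gamma$ extends to $\gamma(R^*) \in X$, and a triangle-inequality estimate together with $L_n \to R^*$ shows $x_n \to \gamma(R^*)$. Hence $\bar{B}(p, R^*)$ is sequentially compact, therefore compact. Covering it by finitely many balls each of compact closure then provides a uniform $\varepsilon > 0$ for which $\bar{B}(p, R^* + \varepsilon)$ is a finite union of compact sets, contradicting the maximality of $R^*$.
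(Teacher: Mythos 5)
Your proof is correct, but it takes a genuinely different (and substantially longer) route than the paper's. The paper cites the full equivalence $(1)\Leftrightarrow(2)\Leftrightarrow(3)$ from \cite[Theorem~2.5.28]{BBI} and then needs only three easy implications to close the cycle: $(2)\Rightarrow(5)$ (compare a divergent path against concentric compact balls to get unbounded length), $(5)\Rightarrow(4)$ (a Lipschitz path on $[0,a)$ has finite length, hence is not divergent, hence accumulates in a compact set; the Lipschitz bound then forces the accumulation point to be a genuine limit), and the trivial $(4)\Rightarrow(3)$. You instead cite only $(1)\Leftrightarrow(2)$ and then re-derive the hard Hopf--Rinow direction $(3)\Rightarrow(2)$ from scratch via the $R^{*}=\sup\{r:\bar B(p,r)\text{ compact}\}$ argument with Arzel\`a--Ascoli and diagonalization, and you also prove $(5)\Rightarrow(1)$ directly by concatenating a chain of near-geodesics between successive terms of a fast Cauchy subsequence and recognizing the result as a finite-length divergent path. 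Both of these are correct but considerably more work: $(3)\Rightarrow(2)$ is precisely the hard part of the theorem you are already invoking from \cite{BBI}, and your $(5)\Rightarrow(1)$ route requires the concatenation/reparametrization machinery, whereas the paper's $(5)\Rightarrow(4)$ is a three-line estimate using only the Lipschitz constant of $\gamma$. What your approach buys is self-containedness (and it implicitly demonstrates that $(3)\Rightarrow(2)$ can be recovered from the other items, since you prove it); what the paper's route buys is economy, proving only the three genuinely new implications among $(1)$--$(5)$. One small imprecision on your side: at the very end of $(3)\Rightarrow(2)$, ``covering $\bar B(p,R^{*})$ by finitely many balls of compact closure'' gives compactness of some $\bar B(p,R^{*}+\varepsilon)$ only after a Lebesgue-number-type argument using the length-space property to place every point of $\bar B(p,R^{*}+\varepsilon)$ close to $\bar B(p,R^{*})$; as written this step is slightly too quick, though it is standard and repairable.
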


\begin{proof}
It is proven in \cite{BBI} that $(1)\Leftrightarrow(2)\Leftrightarrow(3)$. Moreover, $(4)\Rightarrow (3)$ trivially. \\
We prove that $(2)\Rightarrow (5)$. For $n\in\N$, consider the compact sets $B_n^X(\gamma(0))$. Since $\gamma$ is divergent, there exists a sequence  $\{t_n\}_{n=1}^\infty\subset (0,a)$ such that $\gamma(t_n)\not\in \overline{B}_n^X(\gamma(0))$. In particular
$$
L_d(\gamma)\geq L_d(\gamma|_{[0,t_n]})\geq d(\gamma(0),\gamma(t_n))\geq n.
$$
Since $n$ can be arbitrarily large, $\gamma$ has infinite length.\\
To conclude, we prove that $(5)\Rightarrow (4)$. Let $\gamma:[0,a)\to X$ be a Lipschitz rectifiable path. Since $\gamma$ is defined on $[0,a)$ and is Lipschitz, it has finite length. Then it can not be divergent. Namely, there exists a compact set $K\subset X$ and a sequence $\{t_n\}_{n=1}^\infty\subset (0,a)$ such that $t_n\to a$ as $n\to\infty$ and $\gamma(t_n)\in K$ for all $n$. By compactness of $K$, up to passing to a subsequence, $\gamma(t_n)\to x$ as $n\to\infty$ for some limit point $x\in K$. Set $\gamma(a)=x$. We are going to show that $\gamma:[0,a]\to X$ is continuous. Fix $\e>0$. Take $N\in\N$ large enough such that $d(\gamma(t_n),x)<\e/2$ for all $n\geq N$ and $t_N>a-\frac{\e}{2C_\g}$, where $C_\g$ is the Lipschitz constant of $\g$. Then for all $t\in(t_N,a)$,
$$
d(\gamma(t),x)\leq d(\gamma(t),\g(t_N))+d(\g(t_N),x)\leq C_\g|t-t_N|+\e/2 \leq \e.
$$
\end{proof}

We shall need to consider metric properties of curves into a manifold with boundary with respect to both the original metric and to the extended one. To this end, the following Lemma will be crucial.
\begin{lemma}\label{lemma-length}
Let $(N,g_{N})$ be a Riemannian extension of the manifold with boundary $(M,g_{M})$ and let $\gamma : [0,1] \to M$ be a fixed curve. Then
\begin{enumerate}
\item[(a)] $\gamma$ is $d_{(N,g_{N})}$-Lipschitz (resp. rectifiable) if and only if it is $d_{(M,g_{M})}$-Lipschitz (resp. rectifiable).
\end{enumerate}
Moreover, in this case:
\begin{enumerate}
 \item [(b)] $L_{g_{M}}(\gamma) = L_{g_{N}}(\gamma)$.
 \item [(c)] The speed $v_{\gamma}$ of $\gamma$, in the sense of \cite{BBI}, is the same when computed with respect to $d_{(M,g_{M})}$ and $d_{(N,g_{N})}$.
\end{enumerate}
\end{lemma}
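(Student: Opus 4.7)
The plan is to reduce everything to a local comparison between $d_{(M,g_M)}$ and $d_{(N,g_N)}$ near each point of $M$, and then to globalize along the compact image of $\gamma$ by a Lebesgue number argument. One direction is effortless: since $M\hookrightarrow N$ is an isometric embedding, every piecewise $C^{1}$ path in $M$ is a path in $N$ of the same Riemannian length, so $d_{(N,g_N)}(x,y)\le d_{(M,g_M)}(x,y)$ for all $x,y\in M$. This shows at once that every $d_{(M,g_M)}$-Lipschitz (resp. rectifiable) curve is $d_{(N,g_N)}$-Lipschitz (resp. rectifiable). Statement (b) is then immediate: a Lipschitz $\gamma\subset M$ has a pointwise speed $|\gamma'(t)|_g$ a.e., and $|\gamma'(t)|_{g_M}=|\gamma'(t)|_{g_N}$ because $g_M=g_N|_M$ along $\gamma$; integrating gives $L_{g_M}(\gamma)=L_{g_N}(\gamma)$.

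The heart of the proof is the reverse local comparison, which I would phrase as follows: for every $p\in M$ and every $\varepsilon>0$ there exists a neighborhood $U_p\subset N$ of $p$ such that
\[
d_{(M,g_M)}(x,y)\le (1+\varepsilon)\,d_{(N,g_N)}(x,y)\qquad\text{for all } x,y\in U_p\cap M.
\]
For $p\in\inte(M)$ this is trivial by picking $U_p\Subset\inte(M)$ so small that near-minimizing $g_N$-paths between points of $U_p$ remain in $U_p$, so that the two distances coincide there. For $p\in\partial M$ I would introduce Fermi coordinates $(y,t)\in\mathbb{R}^{m-1}\times\mathbb{R}$ centered at $p$ in which $M=\{t\ge 0\}$, $\partial M=\{t=0\}$, and $g_N=dt^{2}+h_t(y)$, where $h_t$ is a smooth family of Riemannian metrics on a neighborhood in $\partial M$ defined for $t\in(-\tau,\tau)$ by smoothness of the extension. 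On a small cylinder $U_p=\{|y|<r,\,|t|<r\}$, continuity yields $(1-\varepsilon)h_0\le h_t\le(1+\varepsilon)h_0$. The decisive device is \emph{folding}: given a rectifiable $g_N$-path $\alpha(s)=(y(s),t(s))$ between two points of $U_p\cap M$ that stays inside $U_p$, the folded path $\tilde\alpha(s):=(y(s),|t(s)|)$ lies in $M$, is Lipschitz through boundary crossings, shares the same endpoints, and satisfies, by a direct pointwise comparison of $\sqrt{(t')^{2}+h_{|t|}(y',y')}$ with $\sqrt{(t')^{2}+h_{t}(y',y')}$, the bound $L_{g_N}(\tilde\alpha)\le\sqrt{(1+\varepsilon)/(1-\varepsilon)}\,L_{g_N}(\alpha)$. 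Restricting the infimum defining $d_{(N,g_N)}(x,y)$ to near-minimizing paths, which stay in $U_p$ for $x,y$ close enough to $p$, gives the claim.

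To derive (a) and (c) from this, I would cover the compact image $\gamma([0,1])$ by finitely many such neighborhoods $U_{p_1},\dots,U_{p_k}$ at a fixed small $\varepsilon_0$. By the Lebesgue number lemma there is $\delta>0$ such that the image under $\gamma$ of any subinterval of $[0,1]$ of length at most $\delta$ lies inside some $U_{p_i}$. If $\gamma$ is $d_{(N,g_N)}$-Lipschitz with constant $C$, then for $|s-t|\le\delta$ I get $d_{(M,g_M)}(\gamma(s),\gamma(t))\le(1+\varepsilon_0)C|s-t|$, and larger gaps follow by splitting into at most $\lceil |s-t|/\delta\rceil$ subintervals and using the triangle inequality; rectifiability is treated identically by refining any partition so that each piece fits inside a single $U_{p_i}$, yielding $\sum d_{(M,g_M)}\le(1+\varepsilon_0)L_{d_{(N,g_N)}}(\gamma)<\infty$. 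For (c), the metric speed $v_\gamma(t_0)=\lim_{h\to 0}d(\gamma(t_0),\gamma(t_0+h))/|h|$ is a pointwise limit; for small $h$ both points lie in one $U_p$ where the local comparison gives $v_\gamma^{(N,g_N)}(t_0)\le v_\gamma^{(M,g_M)}(t_0)\le(1+\varepsilon)v_\gamma^{(N,g_N)}(t_0)$ for every $\varepsilon>0$, so the two speeds agree.

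The main obstacle will be the folding step itself. One must verify that $\tilde\alpha$ is genuinely Lipschitz across the boundary crossings, that the constant $\sqrt{(1+\varepsilon)/(1-\varepsilon)}$ is uniform in the path as the cylinder shrinks, and, crucially, that the infimum defining $d_{(N,g_N)}(x,y)$ may legitimately be restricted to paths that do not leave $U_p$---which reduces to the auxiliary fact that nearly optimal $g_N$-connections between points close to $p$ remain close to $p$. Everything else is bookkeeping via compactness and the Lebesgue number lemma.
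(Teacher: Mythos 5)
Your proof is correct, but the local argument at the boundary is genuinely different from the one in the paper. The paper, after reducing to the Lipschitz case via constant-speed reparametrization, works in a generic coordinate chart $\varphi_0:U_0\to\mathbb{B}_1$ sending $U_0\cap M$ to the upper half-ball $\mathbb{B}_1^+$; since $\varphi_0$ is bi-Lipschitz for the intrinsic distances of both $(V_0,g_N)$ and $(V_0\cap M,g_M)$, and since $\mathbb{B}^+_{1/2}$ is convex in $\mathbb{R}^m$ (so that its intrinsic metric is the restriction of the Euclidean one), $d_{(N,g_N)}$-Lipschitzness of $\gamma$ is pushed to $\mathbb{B}_{1/2}$, restricted to $\mathbb{B}^+_{1/2}$, and pulled back to $d_{(M,g_M)}$-Lipschitzness, with no need to control the bi-Lipschitz constant. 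Parts (b) and (c) are then dispatched by citations (a partition-into-small-intervals argument together with \cite{AA}, and \cite[Corollary 2.7.5]{BBI} respectively). You instead pass to Fermi coordinates $g_N=dt^2+h_t$, fold $t\mapsto|t|$, and obtain the quantitatively sharper local comparison $d_{(M,g_M)}\le(1+\varepsilon)d_{(N,g_N)}$; this costs you a couple of auxiliary verifications (that the metric has the Fermi normal form, that the conformal factor $\sqrt{(1+\varepsilon)/(1-\varepsilon)}$ is uniform, and that near-minimizers can be kept inside the coordinate cylinder), but it pays off in (c), which then follows by letting $\varepsilon\to 0$ in the definition of metric speed without any appeal to (b) or to external references. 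Your treatment of (b) via the a.e.\ derivative and $g_M=g_N|_M$ is also more self-contained than the paper's citation of \cite{AA}. In short: the paper's local step is lighter because convexity of the half-ball does all the work with an uncontrolled bi-Lipschitz constant, while your folding delivers a near-isometry that makes (c) transparent; both globalizations (compactness plus Lebesgue number on your side, implicit compactness on the paper's) are standard and equivalent.
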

\begin{proof}
We preliminarily observe that $d_{(N,g_{N})} \leq d_{(M,g_{M})}$ on $M$. \smallskip

(a) It is enough to consider the Lipschitz property because, as we have already recalled, every rectifiable path has a Lipschitz (constant speed) reparametrization.

We assume that $\gamma$ is $d_{(N,g_{N})}$-Lipschitz  and we prove that $\gamma$ is $d_{(M,g_{M})}$-Lipschitz, the other implication being trivial from the above observation. We shall show that, for every $t_{0} \in [0,1]$, there exists a closed interval $I_{0} \subset [0,1] $ containing $t_{0}$ in its interior such that $\gamma|_{I_{0}}$ is $d_{(M,g_{M})}$-Lipschitz.  We suppose that $\gamma(t_{0}) \in \partial M$, the other case being easier. Let $\varphi_{0} : U_{0} \to \bbb_{1}$ be a local coordinate charts of $N$ centered at $\gamma(t_{0})$ and such that $\varphi_{0}(U_{0} \cap M) = \bbb_{1}^{+}$, the upper-half unit ball. Let $V_{0} = \varphi_{0}^{-1}(\bbb_{1/2})$ and choose $I_{0}$ such that $\gamma(I_{0}) \subset V_{0}$. Note  that the distances $d_{(N,g_{N})} $ and $d_{(V_{0} ,g_{N})}$ are equivalent on $V_{0}$ and, similarly, $d_{(M,g_{M})} $ and $d_{(V_{0}\cap M ,g_{M})}$ are equivalent on $V_{0} \cap M$. Moreover, $\varphi_{0} : (V_{0},d_{(V_{0},g_{N})}) \to (\bbb_{1/2},d_{(\bbb_{1/2},g_{\mathrm{Eucl}})})$ and $\varphi_0 : (V_{0} \cap M ,d_{(V_{0} \cap M,g_{M})}) \to (\bbb^{+}_{1/2} ,d_{(\bbb^{+}_{1/2} ,g_{\mathrm{Eucl}})})$ are  bi-Lipschitz.
Since $\gamma$ is $d_{(N,g_{N})}$-Lipschitz then $\varphi_{0} \circ \gamma|_{I_{0}}$ is $d_{(\bbb_{1/2},g_{\mathrm{Eucl}})}$-Lipschitz. Since $\bbb^{+}_{1/2}$ is convex then $\varphi_{0} \circ \gamma|_{I_{0}}$ is $d_{(\bbb^{+}_{1/2},g_{\mathrm{Eucl}})}$-Lipschitz. Hence $\g|_{I_{0}}$ is $d_{(M,g_{M})}$-Lipschitz.\smallskip

(b) Using a partition of $[0,1]$ by sufficiently small subintervals we can apply \cite[Lemma 1 and Lemma 3]{AA}.\smallskip

(c) This follows from (b) and \cite[Corollary 2.7.5]{BBI}.
\end{proof}

\subsection{The proof of Theorem \ref{theorem-extension}} \label{subsection-extension-completenss}
Let $g_{Q}$ be any  Riemannian metric on $Q$ and let $\eta: \partial M \to \partial Q$ be a selected diffeomorphism. Let us consider the smooth gluing $N:= M \cup_{\eta} Q$ whose differentiable structure is obtained in a standard way using collar neighborhoods of the manifolds involved. More precisely, $N$ is the topological manifold without boundary obtained from $M \cup Q$ identifying points $x$ and $\eta(x)$ for every $x \in \partial M$. 
With a slight abuse of notation, here and on we consider $M$ and $Q$ as subsets of $N$ such that $M\cap Q=\pM$, and we identify objects on $M$ and $Q$ with their images on $N$ via the inclusions $M\hookrightarrow N$ and $Q\hookrightarrow N$.
Let $\mathcal W_M\subset M$ be an open tubular neighborhood of $\pM$ and let $p_M:\mathcal W_M\to\pM\times (-1,0]$ 
be the corresponding smooth diffeomorphism, whose restriction 
$p_M|_\pM : \pM\subset \mathcal W_M \to \pM\times 0$ is the identity map $p_M (x) = x \times 0$. 
Similarly, let $\mathcal W_Q\subset Q$ be a  tubular neighborhood of $\partial Q$ and let $p_Q: \mathcal W_Q\to\pM\times [0,1)$ 
be the corresponding smooth diffeomorphism, whose restriction 
$p_Q|_\pM : \pM\subset W_Q \to \pM\times 0$ is the identity map $p_Q (x) = x \times 0$.

Then $p_M$ and $p_Q$ induce a homeomorphism $p:\mathcal W= \mathcal W_M\cup \mathcal W_Q\subset N \to \pM\times (-1,1)$. The differentiable structure on $N$ is obtained by imposing that the homeomorphism $p$ is a smooth diffeomorphism and that the inclusions 
$j_M: \pM \hookrightarrow N$ and $j_Q: \partial Q \hookrightarrow N$ are smooth embeddings.\smallskip

The proof of Theorem \ref{theorem-extension} is now achieved in three steps that we formulate as the following Lemmas of independent interest.

\begin{lemmaA}\label{lemma-extension-1}
 Keeping the above notation, there exists a Riemannian metric $\tilde g$ on $N$ such that $\tilde g = g_{M}$ on $M$, i.e., $(N,\tilde g)$ is a Riemannian extension of $(M,g_{M})$. Moreover,  for every $\epsilon >0$, there exists a tubular neighborhood $\mathcal X_{Q}\subseteq \mathcal{W}_{Q}$ of $\pM$ in $N\setminus M$ such that:
\begin{enumerate}
 \item [(a)] $P := M \cup \mathcal{X}_{Q} \subset N$ is a manifold with smooth boundary.
 \item [(b)] there exists a $(1+\epsilon)$-Lipschitz projection $\rho:  (P,\tilde g) \to (M,g_M)$ such that $\rho|_{\mathcal X_{Q}}$ is a diffeomorphism.
\end{enumerate}
\end{lemmaA}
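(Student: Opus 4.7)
The plan has three steps: first, construct $\tilde g$ by smoothly extending $g_{M}$ past $\partial M$; second, define $\rho$ to be the identity on $M$ and a normal reflection on a suitable Fermi collar $\mathcal X_Q$; and third, reduce the $(1+\e)$-Lipschitz bound to a pointwise comparison of $\tilde g$ with $\rho^{\ast}g_{M}$ in Fermi coordinates, propagating it from the smooth locus of $\rho$ to all of $P$ via integration along paths.

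For the first step, I would use Fermi coordinates of $\partial M$ inside $(M,g_{M})$ on a collar, identifying it with $\partial M \times (-\e_{0},0]$ and writing $g_{M} = dt^{2} + h_{t}$, with $\{h_{t}\}_{t \in (-\e_{0},0]}$ a smooth family of Riemannian metrics on $\partial M$. A componentwise application of Seeley's extension theorem, performed in local charts on $\partial M$, produces a smooth family $\{\tilde h_{t}\}_{t \in (-\e_{0},\e_{0})}$ agreeing with $h_{t}$ for $t \leq 0$ and still positive definite for small $|t|$. Transporting $dt^{2} + \tilde h_{t}$ through the collar diffeomorphism $p$ yields a smooth metric on a two-sided neighborhood $\mathcal V$ of $\partial M$ in $N$ which coincides with $g_{M}$ on $\mathcal V \cap M$ to infinite order at $\partial M$. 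A cutoff $\chi$, identically $1$ on a smaller collar and supported inside $\mathcal V$, then allows me to interpolate with any auxiliary background metric on $Q$; since $\chi \equiv 1$ near $\partial M$, the resulting metric $\tilde g$ is smooth on $N$, equals $g_{M}$ on $M$, and has matching Taylor jets across $\partial M$.

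Next, take the Fermi parametrization $F : \partial M \times (-a, a) \to N$ of $\partial M$ inside $(N, \tilde g)$, which on the $M$-side agrees with that of $(M,g_{M})$ because $\tilde g|_{M}=g_{M}$. Fix $\delta \in (0,a)$, taken as a positive smooth function on $\partial M$ if the boundary is non-compact, and set $\mathcal X_Q := F(\partial M \times [0, \delta))$; by shrinking $\delta$ I can ensure $\mathcal X_Q \subseteq \mathcal W_Q$. Then $P := M \cup \mathcal X_Q$ is a manifold with smooth boundary $F(\partial M \times \{\delta\})$. Define $\rho : P \to M$ by $\rho|_{M} := \mathrm{id}_{M}$ and $\rho(F(x,t)) := F(x,-t)$ on $\mathcal X_Q$. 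Both prescriptions agree on $\partial M$, so $\rho$ is a continuous retraction of $P$ onto $M$, and $\rho|_{\mathcal X_Q}$ is by construction a diffeomorphism onto $F(\partial M \times (-\delta,0]) \subset M$.

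For the Lipschitz bound, in these Fermi coordinates $\tilde g = dt^{2} + \tilde h_{t}$ and $g_{M} = dt^{2} + h_{t}$, with $\tilde h_{t} = h_{t}$ for $t \leq 0$. A vector $v = a\partial_{t} + b \in T_{F(x,t)}N$ with $0 < t < \delta$ satisfies $|v|_{\tilde g}^{2} = a^{2} + \tilde h_{t}(x)(b,b)$ while $|d\rho(v)|_{g_{M}}^{2} = a^{2} + h_{-t}(x)(b,b)$, since $d\rho$ reverses $\partial_{t}$ and fixes the tangential directions. As $\tilde h_{0} = h_{0}$ and the families $\tilde h_{t}, h_{-t}$ are smooth in $t$, shrinking $\delta$ ensures that the ratio of these quadratic forms is at most $(1+\e)^{2}$ pointwise; on $M \setminus \partial M$ the map $\rho$ is the identity, hence an isometry. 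The main technical obstacle is that $\rho$ fails to be of class $C^{1}$ along $\partial M$, since its normal derivative flips sign, producing a ``crease''. To bypass this, I argue by integration along paths: for any rectifiable $\gamma : [0,1] \to P$, the closed set $\gamma^{-1}(\partial M)$ splits $[0,1]$ into countably many sub-intervals on which $\gamma$ lies in $M$ or in $\mathcal X_Q$; the pointwise bound applied on each sub-interval $I_{k}$ yields $L_{g_{M}}(\rho \circ \gamma|_{I_{k}}) \leq (1+\e)\, L_{\tilde g}(\gamma|_{I_{k}})$, and Lemma \ref{lemma-length} guarantees these lengths are intrinsically defined and additive, so $L_{g_{M}}(\rho \circ \gamma) \leq (1+\e)\, L_{\tilde g}(\gamma)$. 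Taking the infimum over curves from $x$ to $y$ in $P$ gives $d_{g_{M}}(\rho(x),\rho(y)) \leq (1+\e)\, d_{(P,\tilde g)}(x,y)$, as required.
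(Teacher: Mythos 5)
Your proof is correct and follows the same overall strategy as the paper: extend $g_M$ smoothly across $\partial M$, take $\rho$ to be the identity on $M$ and the normal-geodesic reflection on a Fermi collar $\mathcal X_Q$, and promote the pointwise operator-norm bound $\|d\rho\|\leq 1+\epsilon$ to the global Lipschitz estimate by integrating along curves. The genuine deviation is in producing $\tilde g$: the paper covers the collar $\partial M\times(-1,1)$ with a locally finite atlas, extends the metric component matrix in each chart, restricts to where it stays positive definite, and glues these local extensions to $g_M$ and a background metric on $Q$ with a partition of unity; you instead extend the Fermi family $t\mapsto h_t$ across $t=0$ by a componentwise Seeley operator and transport the result through a collar chart. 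Both work, and your route has the pleasant side effect of exhibiting $\tilde g$ directly in the Fermi normal form $dt^2+\tilde h_t$, which turns the estimate $\|d\rho\|\to 1$ into a one-line computation rather than the paper's unproved assertion (though, by the generalized Gauss lemma, the Fermi form $dt^2+\tilde h_t$ holds with respect to the normal exponential map of $(N,\tilde g)$ for \emph{any} extension, so the Seeley step is a convenience rather than a necessity). One point worth making explicit in your final integration step: after splitting $[0,1]\setminus\gamma^{-1}(\partial M)$ into the open intervals $I_k$, the residual closed set $\gamma^{-1}(\partial M)$ may carry positive Lebesgue measure and hence positive length, so $\sum_k L(\gamma|_{I_k})$ need not equal $L(\gamma)$; pure additivity over the $I_k$ is not enough. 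The argument still closes because $\rho$ is the identity on all of $M\supset\partial M$, so $v_{\rho\circ\gamma}=v_\gamma$ holds a.e.\ on $\gamma^{-1}(M)$, which accounts for the exceptional set. The paper's assertion that its open set is of ``full measure'' has a parallel subtlety for the same reason (the boundary of an open subset of an interval can have positive measure), and it is resolved by the same observation.
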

In what follows, the value of $\epsilon$ is irrelevant. Therefore, we will always assume that $\epsilon = 1$.
\begin{lemmaA}\label {lemma-extension-2}
 Let $(M,g_{M})$ and $(P , g_{P}= \tilde g|_{P})$ be as above. If $(M,g_{M})$ is complete then  so is $(P,g_{P})$.
\end{lemmaA}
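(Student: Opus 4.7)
The plan is to invoke the Hopf--Rinow theorem (Theorem \ref{th_HR}) in the direction ``divergent-paths complete $\Rightarrow$ metrically complete''. Since $(P,g_{P})$ is a smooth Riemannian manifold with smooth boundary, its intrinsic distance makes it a locally compact length space, so by Theorem \ref{th_HR} it is enough to show that every locally Lipschitz divergent path $\gamma:[0,1)\to P$ has infinite $g_{P}$-length.

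Given such a $\gamma$, I would push it forward to $M$ via the projection supplied by Lemma \ref{lemma-extension-1}, setting $\bar\gamma:=\rho\circ\gamma:[0,1)\to M$. Taking $\epsilon=1$, the $2$-Lipschitz property of $\rho:(P,d_{(P,g_{P})})\to(M,d_{(M,g_{M})})$ gives, for every finite partition $t_{0}<\cdots<t_{n}$,
$$\sum_{i} d_{(M,g_{M})}(\bar\gamma(t_{i-1}),\bar\gamma(t_{i})) \;\leq\; 2\sum_{i} d_{(P,g_{P})}(\gamma(t_{i-1}),\gamma(t_{i})).$$
Passing to the supremum, and using that the metric-space length of a rectifiable path on a Riemannian manifold with boundary coincides with its Riemannian length (a fact already used in Lemma \ref{lemma-length}), one obtains $L_{g_{M}}(\bar\gamma)\leq 2\,L_{g_{P}}(\gamma)$. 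Since $\gamma$ is locally Lipschitz and $\rho$ is Lipschitz, $\bar\gamma$ is locally Lipschitz; hence, if we can show that $\bar\gamma$ is divergent in $M$, then the completeness of $(M,g_{M})$ together with Theorem \ref{th_HR} applied to $M$ yields $L_{g_{M}}(\bar\gamma)=+\infty$, and the displayed inequality forces $L_{g_{P}}(\gamma)=+\infty$, which is what we want.

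The heart of the proof is therefore the preservation of divergence under $\rho$. I would argue by contradiction: if $\bar\gamma$ were not divergent in $M$, there would exist a compact set $K\subset M$ and a sequence $t_{n}\to 1^{-}$ with $\bar\gamma(t_{n})\in K$, hence $\gamma(t_{n})\in\rho^{-1}(K)$. It then suffices to prove that $\rho^{-1}(K)$ is compact in $P$, contradicting the divergence of $\gamma$. Using that $\rho$ is a projection (so $\rho|_{M}=\mathrm{id}_{M}$) and that $\rho|_{\mathcal{X}_{Q}}$ is a diffeomorphism onto a tubular neighborhood of $\partial M$ in $M$ whose closure can be taken inside $M$, one writes
$$\rho^{-1}(K) \;=\; K \;\cup\; (\rho|_{\mathcal{X}_{Q}})^{-1}\bigl(K\cap\rho(\mathcal{X}_{Q})\bigr).$$
The second piece is the diffeomorphic image of a closed subset of the compact set $K$, hence compact in $\mathcal{X}_{Q}$ and therefore compact in $P$; the union of two compact subsets of $P$ is compact.

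The main obstacle is precisely this last topological step, namely controlling $\rho^{-1}(K)$; this ultimately rests on the fact that, in the construction of Lemma \ref{lemma-extension-1}, $\mathcal{X}_{Q}$ is a tubular neighborhood chosen so that $P=M\cup\mathcal{X}_{Q}$ is a genuine manifold with smooth boundary, and $\rho|_{\mathcal{X}_{Q}}$ is a diffeomorphism onto its image. Everything else in the argument is formal: the Lipschitz length estimate and a double application of the locally compact Hopf--Rinow theorem stated as Theorem \ref{th_HR}.
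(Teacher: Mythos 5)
Your proof is correct and follows essentially the same route as the paper: both reduce completeness of $(P,g_P)$ to divergent-paths completeness via Theorem \ref{th_HR}, push a divergent path forward through the $2$-Lipschitz projection $\rho$ from Lemma \ref{lemma-extension-1}, and hinge on the compactness of $\rho^{-1}(K)=K\cup(\rho|_{\mathcal{X}_Q})^{-1}(K\cap\mathcal{X}_M)$ to show divergence is preserved. The paper phrases the divergence claim directly rather than by contradiction, but the underlying argument and its ingredients are identical to yours.
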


\begin{lemmaA}\label{lemma-extension-3}
Let $(M,g_{M})$ be an $m$-dimensional  Riemannian manifold with non-empty boundary. Let $(P,g_{P})$ be a complete Riemannian extension of $M$ with non-empty boundary. Let $(N,\tilde g)$ be a Riemannian extension of $(P,g_{P})$, hence of $(M,g_{M})$. Then, there exists a Riemannian metric $g_{N}$ on $N$ such that $(N,g_{N})$ is still a Riemannian extension of $(M,g_{M})$ and it is complete.
\end{lemmaA}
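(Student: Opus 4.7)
The strategy is to obtain $g_N$ by a conformal deformation $g_N = \lambda^{2}\tilde g$ of the extension metric, with a smooth factor $\lambda : N \to [1,\infty)$ chosen so that $\lambda \equiv 1$ on $P$ (in particular $g_N|_M = g_M$) and so that $\lambda$ blows up fast enough toward infinity in $N \setminus P$ to force every divergent path to have infinite $g_N$-length.

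To ensure $\lambda = 1$ on $P$ I would introduce a smooth function $\rho : N \to [0, \infty)$ that vanishes to infinite order on $P$, so that $\rho$ and $\nabla \rho$ both vanish there -- near $\partial P$, $\rho$ can be modeled on $e^{-1/t^{2}}$ in a collar coordinate $t$ transverse to $\partial P$, while away from $\partial P$ one patches this with a smooth proper function on $N\setminus P$ via a partition of unity, so that $\rho$ becomes a proper exhaustion of $N\setminus\inte(P)$. Then, picking a smooth non-decreasing $\Phi : [0,\infty) \to [1,\infty)$ with $\Phi(0)=1$ and $\int_{0}^{\infty}\Phi(s)\,ds = +\infty$ (e.g.\ $\Phi\equiv 1$), I set
\[
\lambda(p) := \Phi(\rho(p))\,\sqrt{1 + |\nabla \rho(p)|_{\tilde g}^{2}}, \qquad g_N := \lambda^{2}\tilde g.
\]
By construction $\lambda \equiv 1$ on $P$, so $g_N = \tilde g = g_P$ on $P$ and in particular $g_N|_M = g_M$, so $(N,g_N)$ is indeed a Riemannian extension of $(M,g_M)$.

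Completeness of $(N, g_N)$ is then verified using the divergent-path characterization in Theorem \ref{th_HR}. For a locally Lipschitz divergent path $\gamma : [0,1) \to N$ set $\phi(t) := \rho(\gamma(t))$. If $\phi$ is bounded, the properness of $\rho$ away from $P$ forces $\gamma$ to eventually lie in $P$; since compact subsets of $P$ are compact in $N$, the tail of $\gamma$ is divergent in $(P,g_P)$, and by completeness of $P$ together with Lemma \ref{lemma-length}(b) we obtain $L_{g_N}(\mathrm{tail}) = L_{g_P}(\mathrm{tail}) = +\infty$. If instead $\phi$ is unbounded, the pointwise bound
\[
|\gamma'|_{g_N} = \lambda\,|\gamma'|_{\tilde g} \;\geq\; \Phi(\rho)\,|\nabla \rho|_{\tilde g}\,|\gamma'|_{\tilde g} \;\geq\; \Phi(\phi)\,|\phi'|
\]
combined with the one-dimensional area formula applied to the continuous function $\phi$, which attains every value in $[\inf\phi,\infty)$, yields
\[
L_{g_N}(\gamma) \;\geq\; \int_{0}^{1}\Phi(\phi(t))\,|\phi'(t)|\,dt \;\geq\; \int_{\inf\phi}^{\infty}\Phi(s)\,ds = +\infty.
\]
Either way $L_{g_N}(\gamma)=+\infty$, so by Theorem \ref{th_HR} the metric $g_N$ is complete.

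The main obstacle I anticipate is the construction of $\rho$: the simultaneous requirements of infinite-order vanishing on $P$ and of providing a proper exhaustion of $N \setminus \inte(P)$ are delicate to reconcile when $\partial P$ is non-compact, since a naive distance-to-$P$ function fails to be proper. A careful partition-of-unity argument, possibly exploiting the completeness of $(P, g_P)$ to supply a proper exhaustion along $\partial P$ in the tangential directions as well, is needed to make the construction work in full generality.
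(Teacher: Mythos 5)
Your approach is genuinely different from the paper's: rather than building the conformal factor from an exhaustion of $N$ and handling the oscillating case via a replacement-path construction inside $P$, you encode everything in a single auxiliary function $\rho$ and absorb the oscillating case into the ``$\phi$ unbounded'' case via the one-dimensional area formula. That is an appealing simplification, but as stated the construction of $\rho$ does not merely require care---it is impossible when $\partial P$ is non-compact, which is the generic situation here. You require simultaneously that $\rho$ vanish to infinite order on all of $P$ (hence on $\partial P$) and that $\rho|_{N\setminus\inte P}$ be proper. Since $\partial P\subset N\setminus\inte P$, this would force $\partial P\subset\rho^{-1}(0)\cap(N\setminus\inte P)$, a non-compact zero level set, contradicting properness. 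Your suggested repair (``a proper exhaustion along $\partial P$ in the tangential directions'') cannot help: $\rho$ is forced to be identically $0$ on $\partial P$, so it cannot grow tangentially along it. This is the case that the paper's third case and replacement-path argument are designed to deal with, and your two-way dichotomy silently loses exactly this case: a divergent $\gamma$ hugging $\partial P$ out to infinity has $\rho\circ\gamma$ bounded but need not eventually lie in $P$.

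The gap is repairable, though, by weakening the requirement to ``$\rho\equiv 0$ on $M$'' rather than on all of $P$. Choose a smooth $\chi:N\to[0,1]$ with $\chi\equiv 0$ on $M$ and $\chi\equiv 1$ on $N\setminus\inte P$ (these are disjoint closed sets; smoothness of $\chi$ then automatically gives infinite-order vanishing on $M$), choose any smooth proper $f:N\to[0,\infty)$, and set $\rho=\chi f$, so that $\rho$ agrees with $f$ on $N\setminus\inte P$ and is therefore proper there. Then $\lambda\equiv 1$ on $M$ as required, and still $\lambda\geq 1$ everywhere, but $\lambda$ is no longer $\equiv 1$ on $P$. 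Accordingly, in your bounded case the claimed identity $L_{g_N}(\mathrm{tail})=L_{g_P}(\mathrm{tail})$ must be replaced by the inequality $L_{g_N}(\mathrm{tail})\geq L_{\tilde g}(\mathrm{tail})=L_{g_P}(\mathrm{tail})=+\infty$, using $g_N\geq\tilde g$ and Lemma~\ref{lemma-length}(b). With these adjustments your area-formula dichotomy does give a correct and rather clean proof, so the idea is salvageable; but the incompatibility in the construction of $\rho$ as you wrote it is a genuine error, not a technicality to be patched with a partition of unity.
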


The rest of the section is entirely devoted to the proofs of these results.

\begin{proof}[Proof of Lemma \ref {lemma-extension-1}]
 We proceed by steps.\smallskip

 \noindent \textit{Step 1.} First, we construct a local extension of $g_{M}$ beyond $\pM$ in $N$.
Consider on the cylinder $\pM\times(-1,1)$ a locally finite family of coordinate charts $\{(V_\b,\psi_\b): \b\in B\}$ such that 
\begin{itemize}
 \item [(i)] $\cup_{\b\in B}V_\b\supset \pM\times \{0\}$,
 \item [(ii)] $\psi_\b(V_\b)= \bbb_1$,
\end{itemize}
where $\bbb_{1}$ denotes the unit ball in the Euclidean space $\rr^{m}$. Let $\mathcal S$ be the space of symmetric $m\times m$ matrices and set 
$$\mathbb{L}_{t}=\{(x_1,\dots,x_n)\in \bbb_{1} : x_n\leq t\}.$$ 
In particular, $\mathbb{L}_{0} = \bbb_{1}^{-}$, the lower-half unit ball. Fix $\b\in B$. The metric $g_{M}$ on $p^{-1}(V_\b)
\cap M$ is represented in local coordinates by a smooth section 
$s_\b:\mathbb{L}_0\to\mathcal S$, such that $s_\b(x)$ is positive definite for all $x\in \mathbb{L}_0$. Extend smoothly $s_\b$ to a section $\tilde s_\b: \bbb_{1}  \to\mathcal S$. 
By continuity we can find a $t_\b\in (0,1]$ such that  $\tilde s_\b$ is positive definite for all $x\in \mathbb{L}_{t_\b}$. Define $\tilde V_\b=p^{-1}\circ\psi_\b^{-1}(\operatorname{int}\mathbb{L}_{t_\b})$. Repeating the construction for all $\b\in B$ we have obtained a family of local Riemannian metrics $\tilde g_\b$ defined on $\tilde V_\b$ for all $\b\in B$, such that $\tilde g_\b=g_{M}$ on $\tilde V_\b\cap M$. Moreover $\cup_{\b\in B}\tilde V_\b\supset \pM\times \{0\}$.\\

\noindent \textit{Step 2.} Next, we extend smoothly $g_{M}$ to a global metric $\tilde g$ on $N$. 
The collection of sets $\{\inte M, \inte Q,  \tilde V_\b : \b\in B\}$
gives a locally finite covering of $N$. Let $\{\eta_M,\eta_Q, \eta_\beta : \beta\in B\}$ be a subordinated partition of unity. Then 
$$\tilde g=\eta_M g_M + \eta_Q g_Q  + \sum_{\b\in B}\eta_\b\tilde g_\b$$ is a positive definite smooth Riemannian metric on $N$. Moreover, for all $x\in M$,
$$\tilde g|_x=\eta_M g_M|_x + \sum_{\b\in B}\eta_\b g|_x=g_M|_x.$$

\noindent \textit{Step 3.} Finally, we show how to construct the neighborhood $\mathcal X_Q$ and the Lipschitz projection $\rho$. 

For all $x\in\pM$, let $\nu(x)$ be the outward normal vector to $\pM$ at the point $x$. The exponential map
$\exp^\perp(x,s):=\exp_x(s\nu(x))$ is well defined for any $s$ small enough (depending on $x$), i.e. for $s\in [-s_0(x),s_0(x)]$ where we can assume that $s_0:\pM\to(0,\infty)$ is smooth. Set 
\begin{align*}
\mathcal X_Q&= \{\exp^\perp(x,s)\ :\ x\in\pM,0\leq s\leq s_0\},\\
\mathcal X_M&= \{\exp^\perp(x,s)\ :\ x\in\pM,0\geq s\geq -s_0\}.
\end{align*}
Define $\rho: M \cup \mathcal X_{Q} \to M$ as $\rho(\exp^\perp(x,s))=\exp^\perp(x,-s)$ when $s>0$ (i.e. $\rho$ reflects $\mathcal X_Q$ onto $\mathcal X_M$ with respect to Fermi coordinates) and $\rho = \mathrm{id}$ on $M$. Let $\|d \rho \|(p):= \sup_{T_{p}M \setminus \{0\}} |d_{p}\rho(v)|_{\rho(p)}/ |v|_{p}$ denotes the operator norm of $d_{p}\rho$. It is not difficult to see that  $\|d\rho \|(\exp^\perp(x,s))\to 1$ as $s\to 0$ for every $x\in\pM$, therefore we can choose the function $s_0$ so small, depending on $\epsilon$, that $\partial \mathcal X_{Q}$ is smooth and $\|d\rho \| \leq 1+ \epsilon$ on $P$. This latter bound implies that $\rho$ is a $(1+\epsilon)$-Lipschitz map. This amounts to show that, given a piecewise $C^{1}$-curve $\gamma : [0,a] \to P$, it holds
\begin{equation}\label{length}
L_{M} (\rho \circ \gamma) \leq (1+\epsilon) L_{M} (\gamma).
\end{equation}
To this aim, we note that $\rho$ is locally Lipschitz in  $P$. The only delicate points are those in the bi-collar neighborhood $\mathcal{X}_{M} \cup \mathcal{X}_{Q }$. But, in this set, $\rho$ is locally Lipschitz with respect to the product metric inherited from $\partial M \times [-1 , 1]$ and local Lipschitzianity does not depend on the ground metric. Now, the image $\rho \circ \gamma : [0,a] \to M$ is locally Lipschitz and its length satisfies
\[
L_{M} (\rho \circ \gamma) = \int_{0}^{a} v_{\rho \circ \gamma}(t) dt,
\]
where $v_{\rho\circ \gamma}$ denotes the speed of the curve in the sense of \cite{BBI}. In view of (c) of Lemma \ref{lemma-length}, since
\[
v_{\rho\circ \gamma}(t) \leq \| d \rho\|(\gamma(t)) \cdot   v_{\gamma}(t) \leq (1+\epsilon) v_{\gamma}(t)
\]
on the open and full measure subset of $[0,a]$:
\[
\gamma^{-1}(P\setminus M) \cup \inte \big([0,a] \setminus \gamma^{-1}(P \setminus M) \big)
\]
then, by integration, we deduce the validity \eqref{length}. 

\end{proof}

\begin{proof}[Proof of Lemma \ref{lemma-extension-2}]
First, we claim that given a locally Lipschitz, divergent path $\gamma : [0,1) \to P$ its (locally Lipschitz) projection $\rho \circ \gamma : [0,1) \to M$ is divergent. Indeed, if $K \subset M$ is a compact set, then $\rho^{-1}(K) = K \cup \rho|_{\mathcal X _{Q}}^{-1}(K \cap \mathcal X_{M})$ is compact in $P$. Therefore, there exists $0 \leq T <1$ such that $\gamma(t) \not \in \rho^{-1}(K)$ for every $T \leq t <1$. It follows that $\rho \circ \gamma (t) \not \in K$ for $T \leq t <1$, proving the claim.

Now, by Theorem \ref{th_HR}, $(M,g_{M})$ is divergent paths complete and therefore $L_{g_{M}}( \rho \circ \gamma) = +\infty$. Since $\rho$ is $2$-Lipschitz, we conclude that $L_{g_{P}}(\gamma) = +\infty$, as desired.
\end{proof}

\begin{proof}[Proof of Lemma \ref{lemma-extension-3}]
 Consider an exhaustion of $N$, i.e. a sequence $\{N_j\}_{j=0}^\infty$ of compact manifolds with smooth boundary such that $N_j\Subset N_{j+1}\subset N$ for all $j\geq0$ and $\cup_{j=0}^\infty N_j=N$. In the following, we use the convention $N_j=\emptyset$ whenever $j<0$. Call:
 \begin{itemize}
\item  $N_{j,a}$ any connected component of $(N\setminus \inte P)\cap (\overline{N_{j+1}\setminus N_{j}})$ for $a\in A_j$;
\item $\hat N_{j,b}$ any connected component of $(N\setminus \inte P)\cap(\overline{N_{j+2}\setminus N_{j-1}})$ for $b\in B_j$. Observe that $\#B_j\leq \# A_j<\infty$ for all $j$.
\end{itemize}
Finally, define
\begin{itemize}
\item $\partial_P\hat N_{j,a}=\hat N_{j,a}\cap \partial P$.
\end{itemize}
 
We have the following

\begin{lemma}\label{lem_metric}
There exists a smooth Riemannian metric $g_{N}$ on $N$ such that $(N,g_{N})$ is a Riemannian extension of $(M,g_{M})$ and, for all $j\in\N$, $a\in A_j$ and $b\in B_j$, the following hold:
\begin{enumerate}
\item [(a)] Let $x,y\in N_{j,a}$ with $x\in \partial N_j$ and $y\in \partial N_{j+1}$.  If $\gamma : [0,1] \to N_{j,a}$ is any Lipschitz path connecting $x$ to $y$ then $L_{g_{N}}(\gamma) \geq 1$.
\item [(b)] Let $x,y \in \partial_P\hat N_{j,b}$.
If $\gamma : [0,1] \to \hat N_{j,b}$ is any Lipschitz path connecting $x$ to $y$ then $L_{g_{N}}(\gamma) \geq d_{(P,g_{P})}(x,y)$.
\end{enumerate}
\end{lemma}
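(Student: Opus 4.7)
The plan is to take $g_N := \lambda^2 \tilde g$ for a smooth function $\lambda \colon N \to [1, \infty)$ with $\lambda \equiv 1$ on $P$; this automatically makes $(N, g_N)$ a Riemannian extension of $(M, g_M)$, and via the pointwise bound $L_{g_N}(\gamma) \geq (\inf_t \lambda(\gamma(t))) \, L_{\tilde g}(\gamma)$ the conditions (a) and (b) reduce to suitable lower bounds for $\lambda$ on the sets $N_{j,a}$ and $\hat N_{j,b}$.

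For (a), I set
$$
\alpha_{j,a} := \inf\bigl\{ L_{\tilde g}(\gamma) : \gamma\text{ Lipschitz in }N_{j,a},\ \gamma(0)\in\partial N_j,\ \gamma(1)\in\partial N_{j+1} \bigr\}.
$$
Compactness of $N_{j,a}$ together with the fact that $\partial N_j \cap N_{j,a}$ and $\partial N_{j+1} \cap N_{j,a}$ are disjoint compact sets makes $\alpha_{j,a} > 0$ a standard exercise; imposing $\lambda \geq 1/\alpha_{j,a}$ on $N_{j,a}$ then forces $L_{g_N}(\gamma) \geq 1$. For (b), the key quantity is
$$
\beta_{j,b} := \inf_{\substack{x, y \in \partial_P \hat N_{j,b}\\ x \neq y}}\; \frac{\inf_\gamma L_{\tilde g}(\gamma)}{d_{(P, g_P)}(x,y)},
$$
with the inner infimum over Lipschitz paths in $\hat N_{j,b}$ from $x$ to $y$. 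Granted $\beta_{j,b} > 0$, imposing $\lambda \geq 1/\beta_{j,b}$ on $\hat N_{j,b}$ yields $L_{g_N}(\gamma) \geq d_{(P, g_P)}(x,y)$.

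Proving positivity of $\beta_{j,b}$ is where I expect the main difficulty. My strategy is to split the infimum according to a threshold $\delta > 0$ on $d_{(P, g_P)}(x, y)$. On the compact set of pairs with $d_{(P, g_P)}(x, y) \geq \delta$, compactness of $\hat N_{j,b}$ gives a positive lower bound for the inner infimum, while the $d_P$-diameter of $\partial_P \hat N_{j,b}$ is finite by completeness of $(P, g_P)$; the ratio is therefore bounded below. For pairs with $d_{(P, g_P)}(x, y) < \delta$, I cover $\partial_P \hat N_{j,b}$ by finitely many coordinate charts of $N$ in which $\partial P$ is a coordinate hyperplane and both $\tilde g$ and $g_P$ are quasi-isometric to the Euclidean metric. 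Taking $\delta$ small enough, any such pair lies in a single chart, where $d_{(P, g_P)}(x, y) \asymp |x-y|_{\mathrm{Eucl}}$ because $P$ locally models a quasi-Euclidean half-space, and $\inf_\gamma L_{\tilde g}(\gamma) \asymp |x-y|_{\mathrm{Eucl}}$ because a short arc crossing into the opposite half-space realizes the infimum up to a universal constant. Combining the two regimes gives $\beta_{j,b} > 0$.

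Finally, the global construction of $\lambda$ is routine: since $\# A_j, \# B_j < \infty$, on each shell $\overline{N_{j+2} \setminus N_{j-1}}$ the required lower bound on $\lambda$ reduces to a single constant $M_j$. Replacing $M_j$ by the non-decreasing sequence $\bar M_j := \max_{k \leq j} M_k$, a smooth function equal to $1$ in a neighborhood of $P$ in $N$ and exceeding $\bar M_j$ outside $N_{j-1}$, produced by standard smoothing of a step function along the exhaustion $\{N_j\}$, yields the desired $\lambda$.
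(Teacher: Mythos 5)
Your strategy---conformally scale $\tilde g$ by a factor $\lambda^2\geq 1$ that is large on the shells $N_{j,a}$ and $\hat N_{j,b}$---is the same as the paper's, and your $\alpha_{j,a},\beta_{j,b}$ are exactly the paper's $q_1^{j,a},q_2^{j,b}$. Your two-case proof that $\beta_{j,b}>0$ is a slightly more elaborate packaging of the paper's direct compactness argument, in which one shows that if $\delta^{j,b}(x_k,y_k)\to 0$ then $x_k,y_k$ converge to a common point of $\partial P$, where smoothness of $\partial P$ forces $d_{(N,\tilde g)}(x_k,y_k)/d_{(P,\tilde g)}(x_k,y_k)\to 1$; both rest on the same local fact about smooth boundaries.

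There is, however, a genuine gap: you cannot insist that $\lambda\equiv 1$ on $P$, let alone on a neighborhood of $P$ in $N$. The set $\hat N_{j,b}$ meets $P$ exactly along $\partial_P\hat N_{j,b}\subset\partial P$, so the constraint $\lambda\geq 1/\beta_{j,b}$ on all of $\hat N_{j,b}$ is incompatible with $\lambda\equiv 1$ on $P$ whenever $\beta_{j,b}<1$, and this does happen: if a component of $N\setminus\inte P$ fills a cavity of $P$, two boundary points $x,y\in\partial_P\hat N_{j,b}$ may be joined by a short chord across the cavity while every path inside $P$ must travel around it, so $\tilde d_{\hat N_{j,b}}(x,y)\ll d_{(P,g_P)}(x,y)$. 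Indeed the paper explicitly allows $q_2^{j,b}<1$ by inserting $\max\{0,-\ln q_2^{j,b}\}$ in the exponent. (A second symptom: when $P$ is noncompact it escapes every $N_{j-1}$, so requiring $\lambda\equiv 1$ near $P$ and $\lambda\geq\bar M_j$ outside $N_{j-1}$ cannot both hold.) The lemma only requires $g_N$ to extend $g_M$, not $g_P$; since $N_{j,a}$ and $\hat N_{j,b}$ sit in $N\setminus\inte P\subset N\setminus M$, the correct requirement is $\lambda\equiv 1$ on $M$ only, allowing $\lambda>1$ on the collar $P\setminus M$---precisely what the paper's cut-offs $\mu_{j,a},\nu_{j,b}\in C^\infty_c((N\setminus M)\cap N_{j+2})$ achieve. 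With that correction your step-function smoothing along the exhaustion goes through.
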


\begin{proof}
For the ease of notation, given a subset $C$ of $(N,\tilde g)$ we shall denote by $\tilde d_{C}$ the length metric on $C$ induced by $(N,\tilde g)$, namely,
\[
\tilde d_{C}(c_{1},c_{2}) = \inf L_{\tilde g}(\gamma)
\]
where the infimum is taken over the Lipschitz path in $C$ (if any) connecting $c_{1}$ with $c_{2}$.\smallskip

For any  $j\in\N$ and $a\in A_j$, define
\[
q_1^{j,a}=\inf \tilde d_{N_{j,a}}(x,y),
\]
where the infimum is taken over all the $x,y\in N_{j,a}$ such that $x\in \partial N_j$ and $y\in \partial N_{j+1}$. Since $\partial N_{j+1}\cap (N\setminus \inte P)$ and $\partial N_j\cap (N\setminus \inte P)$ are  compact and disjoint, $q_1^{j,a}>0$.\\
For any $j\in\N$ and $b\in B_j$, define
$$
 \delta^{j,b}(x,y)= \frac {\tilde d_{\hat N_{j,b}}(x,y)}{d_{(P,g_{P})}(x,y)},
$$
and
\[
q_2^{j,b}=\inf  \delta^{j,b}(x,y),
\]
where the infimum is taken over all the $x\neq y$ belonging to $\partial_P\hat N_{j,b}$. We claim that $q_2^{j,b}>0$. Indeed, suppose $q_{2}^{j,b}=0$. Then, there exist sequences of points $\{x_{k}\}$ and $\{y_{k}\}$ in $\partial_{P} \hat N_{j,b} \subset \partial P$  such that $\delta^{j,b}(x_{k},y_{k}) \to 0$. Since $\tilde d_{\hat N_{j,b}} \geq d_{(N,\tilde g)}$ on $\hat N_{j,b}$, we deduce that
\[
\frac { d_{(N,\tilde g)}(x_{k},y_{k})}{d_{(P,g_{P})}(x_{k},y_{k})} \to 0.
\]
Since $x_{k},y_{k}$ are in a compact subset of $P$ then the denominator  $d_{(P,g_P)}(x_{k},y_{k})$ is uniformly bounded. It follows that $d_{(N,\tilde g)}(x_{k},y_{k}) \to 0$. Therefore, by compactness of $\partial_{P} \hat N_{j,b}$, and up to passing to subsequences, we can assume that $\{x_{k}\} , \{y_{k}\}$ converge to a same point $z \in \partial_{P} \hat N_{j,b}$ with respect to the $d_{(N,\tilde g)}$ metric.
Since $P$ is a manifold with smooth boundary,
 \[
 \frac { d_{(N,\tilde g)}(x_{k},y_{k})}{d_{(P,g_{P})}(x_{k},y_{k})}= \frac { d_{(N,\tilde g)}(x_{k},y_{k})}{d_{(P,\tilde g)}(x_{k},y_{k})} \to 1,
 \]
 a contradiction.\\

For every $j\in\N$, $a\in A_j$ and $b\in B_j$, let $\mu_{j,a},\nu_{j,b}\in C^{\infty}_c((N\setminus M)\cap N_{j+2})$ be such that $0\leq\mu_{j,a},\nu_{j,b},\leq 1$, 
$$\mu_{j,a}|_{N_{j,a}}\equiv 1,\ \mu_{j,a}|_{N_{j-1}}\equiv 0,\ \nu_{j,b}|_{\hat N_{j,b}}\equiv 1,\ \nu_{j,b}|_{ N_{j-2}}\equiv 0.
$$
We define the smooth Riemannian metric $g_{N}$ on $N$ as
\begin{align*}
g_{N}(x)=e^{2\sum_{j=0}^\infty\left[\sum_{a\in A_j}\max\{0;-\ln(q_1^{j,a})\}\mu_{j,a}(x)
+\sum_{b\in B_j}\max\{0;-\ln(q_2^{j,b})\}\nu_{j,b}(x)\right]}
\tilde g(x).
\end{align*}
Note that $g_{N}$ is well defined, since the sum is locally finite. Moreover the conformal factor is everywhere greater or equal to $1$, and it is greater or equal to $(q_1^{j,a})^{-2}$ on $N_{j,a}$ and to  $(q_2^{j,b})^{-2}$ on $\hat N_{j,b}$. So the metric $g_{N}$ satisfies the claim of the lemma.
\end{proof}

To conclude the proof of Lemma \ref{lemma-extension-3}, we have to show that the metric $g_{N}$ of $N$ obtained in Lemma \ref{lem_metric} is (divergent paths) complete. To this end, we take a locally Lipschitz divergent path $\gamma :[0,1) \to N$  and we distinguish three different cases:\\

\noindent \textit{First case}. The path $\gamma$ is definitely contained in $N \setminus P$. Without loss of generality we can assume that the entire path $\gamma$ is contained in $N \setminus P$. Using item (a) of Lemma \ref{lem_metric} we easily deduce that $L_{g_{N}}(\gamma) = +\infty$.\\

\noindent \textit{Second case}. The path $\gamma$ is definitely contained in $\inte P$. As above, we can assume that $\gamma$ is entirely in $P$. Then, by assumption, $L_{g_{P}}(\gamma) = +\infty$. On the other hand, by definition of $g_N$ we have that  $L_{g_{N}} \geq L_{g_{P}}$ and, therefore, $L_{g_{N}}(\gamma) = +\infty$.\\

\noindent \textit{Third case}. There exists a sequence of times $t_{k} \to 1^{-}$ such that $\gamma (t_{2k}) \in N \setminus  P$ and $\gamma(t_{2k+1}) \in \inte P$ for all $k$. By contradiction, let us assume that $L_{g_{N}}(\gamma) < +\infty$. Then, up to starting from $T$ close enough to $1$ we can assume that $\ell := L_{g_{N}}(\gamma) < 1$ and that $\gamma(0) \in P$.
We consider the natural reparametrization of $\gamma$ and we assume that $\gamma : [0,\ell) \to N$ has unit speed; \cite[Proposition 2.5.9]{BBI}.

Consider the disjoint union 
\[
\gamma^{-1}(N \setminus P) = \dot \cup_{\lambda \in \nn} (\a_{\l}, \b_{\l}).
\]
Then, by item (a) of Lemma \ref{lem_metric}, for each $\l$ there exist $j_{\l}\in \nn$ and $b_{\l} \in B_{j_{\l}}$ such that
\[
\gamma((\a_{\l},\b_{\l})) \subset \hat N_{j_{\l},b_{\l}}.
\]
By item (b) of Lemma \ref{lem_metric}, for every $\l$,
\[
d_{(P,g_{P})}(\gamma(\a_{\l}), \gamma(\b_{\l})) \leq L_{g_{N}} (\g|_{(\a_{\l},\b_{\l})}).
\]
Hence there exists a Lipschitz curve $\s_{\l} : [\a_{\l},\b_{\l}] \to P$ with the same endpoints of $\gamma|_{[\a_{\l},\b_{\l}]}$, i.e.,
\[
\s_{\l}(\a_{\l}) = \gamma(\a_{\l}),\quad \s_{\l}(\b_{\l}) = \gamma(\b_{\l}),
\]
and such that
\begin{equation}\label{length-g}
L_{g_{P}}(\s_{\l}) \leq 2 L_{g_{N}} (\g|_{(a_{\l},\b_{\l})}) = 2( \b_{\l} - \a_{\l} ).
\end{equation}
We now construct a new path $\sigma : [0,\ell) \to P$ by setting
\[
\sigma(t) = \begin{cases}
\s_{\l}(t) &\text { if } t\in (\a_{\l},\b_{\l}), \text{ for some }\l \in \nn \\
\gamma(t) &\text {otherwise. }
\end{cases}
\]
Set $\mathcal A_n:=\cup_{\l=0}^n(\a_\l,\b_\l)$. For every $n\in\N$ we introduce the $d_{(N,\tilde g)}$-rectifiable paths $\gamma_n:[0,\ell)\to N$ by
\[
\gamma_n(t)=\begin{cases}
\s_{\l}(t) &\text { if } t\in \mathcal A_n,\\
\gamma(t) &\text {otherwise. }
\end{cases}
\]
From \eqref{length-g}, item (b) of Lemma \ref{lemma-length}, and the fact that, by construction, lengths with respect to $\tilde g$ are smaller than lenghts with respect to $g_N$, we deduce that for all $n\in \N$,
\begin{align*}
L_{\tilde g}(\g_n)&=L_{\tilde g}(\g|_{[0,\ell)\setminus \mathcal A_n})+ \sum_{i=0}^n L_{\tilde g}(\s_\l|_{(a_\l,\b_\l)})\\
&= L_{\tilde g}(\g|_{[0,\ell)\setminus \mathcal A_n})+ \sum_{i=0}^n L_{g_P}(\s_\l|_{(a_\l,\b_\l)})\\
&\leq L_{g_N}(\g|_{[0,\ell)\setminus \mathcal A_n})+ \sum_{i=0}^n 2L_{g_N}(\g|_{(a_\l,\b_\l)})\\
&\leq 2L_{g_N}(\g)=2\ell.
\end{align*}
By the semi-continuity of $L_{\tilde g}$ we get that $\s$ is $d_{(N,\tilde g)}$-rectifiable, and 
\begin{equation}\label{finite-length}
L_{g_P}(\s)=L_{\tilde g}(\s)\leq 2\ell.
\end{equation}
Namely, for any fixed $S\in(0,\ell)$ and for any finite partition
$0=s_0<s_1<\dots<s_K=S$, 
there exists $n\in N$ such that $\gamma_n(s_j)=\sigma(s_j)$ for all $j=0,\dots,K$, so that
\[\sum_{j=1}^K d_{(N,\tilde g)}(\s(s_{j-1}),\s(s_{j}))=\sum_{j=1}^K d_{(N,\tilde g)}(\gamma_n(s_{j-1}),\gamma_n(s_{j}))\leq L_{\tilde g}(\g_n)\leq 2\ell.
\]
Finally we show that $\s$ is divergent in $(P,g_P)$. This fact, together with \eqref{finite-length} will contradict the divergent paths completeness of $(P,g_P)$, thus concluding the proof of Lemma \ref{lemma-extension-3}.

To this purpose, fix a compact $C\subset P$ and let $j$ be large enough so that $C\subset N_j$. Since $\g$ is divergent in $N$, there exists $T\in [0,\ell)$ such that $\g(t)\not \in N_{j+1}$ for all $t\in (T,\ell)$. Set 
\[
\mathcal T:=\{\l\in\N\ :\ a_\l>T\text{ and }\s_\l([\a_\l,\b_\l])\cap N_j\neq \emptyset\}.\]
If $\mathcal T$ is empty, there is nothing to prove. Otherwise note that, for every $\l\in\mathcal T$, $\sigma_\l(\a_\l)\not \in N_{j+1}$ and $\sigma_\l(\b_\l)\not \in N_{j+1}$. Define
\[
c_j:=\min\{d_{(N,\tilde g)}(x,y)\ :\ x\in \partial N_j\text{ and }y\in \partial N_{j+1}\},
\]
which is well defined by compactness, and strictly positive since $N_j\Subset N_{j+1}$.
Then 
\[
\sharp\mathcal T \leq  \frac{L_{g_p}(\s)}{
2c_j}<\infty.
\]
Accordingly, we have that $\b^\ast:=\max_{\l\in\mathcal T}b_\l$ satisfies $\b^\ast<\ell$ and $\s([\b^\ast,\ell))\subset N\setminus N_{j}\subset N\setminus C$.

\end{proof}

\section{Some applications}

According to Theorem \ref{theorem-extension}, a Riemannian manifold with smooth boundary can be always realized as a smooth, closed domain of a Riemannian manifold without boundary. Moreover, the ambient manifold can be chosen to be geodesically complete if the original manifold with boundary was metrically complete (hence a closed domain). This viewpoint on manifolds with boundary has two main consequences: on the one hand, open relations concerning Riemannian quantities of local nature extend trivially past the boundary of the manifold. On the other hand, by restriction, one can easily inherit basic results and constructions from complete manifolds without boundary.  We shall provide examples of both these instances.

\subsection{Local extensions with curvature constraints}
Let $(M,g_{M})$ be a Riemannian manifold with boundary $\partial M \not= \emptyset$ satisfying a strict curvature condition like $\curv_{M}>C$ or $\curv_{M} < C$ for some constant $C \in \rr$. Here, $\curv$ denotes either the sectional, the Ricci or the scalar curvature of the manifold at hand.\smallskip

Consider any Riemannian extension $(N,g_{N})$ of $(M,g_{M})$. Since $\curv >C$ (resp. $\curv < C$) and $\curv_{M} = \curv_{N}$ on $M$, by continuity there exists a neighborhood $U \subseteq N$ of $\partial M$ such that $\curv > C$ (resp. $\curv < C$) holds on $V = M \cup U$. \smallskip

Assume now that $\partial M$ is compact. We say that $\partial M$ is strictly convex (resp. strictly concave) if, with respect to the outward pointing Gauss map $\nu$, the eigenvalues $\lambda_{1},\cdots,\lambda_{m-1}$ of the shape operator $\mathcal{S}(X) = -\text{ }^{N}\!D_{X}\nu$ satisfy $\lambda_{j} <0$ (resp. $>0$). We choose $0 < \delta \ll 1$ in such a way that the normal exponential map $\text{ }^{N}\!\exp^{\perp}: \partial M \times (-\delta , \delta) \to V$
defines e diffeomorphism onto its image  and we can consider the corresponding family of  (diffeomorphic) parallel hypersurfaces
\[
(\partial M)_{t} = \text{ }\!^{N}\!\exp^{\perp}(\partial M \times \{t\}).
\]
Let $\mathcal{S}_{t}$ denote the shape operator of $(\partial M)_{t}$. It is known that its eigenvalues $\lambda_{1}(t),\cdots,\lambda_{m-1}(t)$ evolve (for a.e. $t$) according to the Riccati equation
\[
\frac{d \lambda_{j}}{dt} (t) = \lambda_{j}^{2} (t) + \sect_{N} (\nu \wedge E_{j}(t))
\]
where $E_{j}(t) \in T (\partial M)_{t}$ is the eigenvector of $\mathcal{S}_{t}$ corresponding to $\lambda_{j}(t)$; see e.g. \cite{G}. From this equation, under curvature restrictions and using comparison arguments, one could obtain sign conclusions on suitable intervals. Anyway, regardless of any curvature assumption, if
\[
\lambda_{j}(0) = \lambda_{j} <0
\]
(resp. $>0$), by continuity we find $0<\epsilon < \delta$ such that
\[
\lambda_{j}(t) < 0,\quad 0 \leq t \leq \epsilon,
\]
(resp. $>0$). Clearly, similar considerations hold for the mean curvature function. Thus, by taking $\bar N := M \cup \text{ }\!^{N}\!\exp^{\perp}(\partial M \times [0,\epsilon])$, we have proved the following result.

\begin{corollary}\label{corollary-localcurvature}
 Let $(M,g_{M})$ be a Riemannian manifold with boundary $\partial M \not= \emptyset$ and satisfying $\curv_{M} > C$ (resp. $\curv_{M} <C$). Then, there exists a Riemannian extension $(\bar N,g_{\bar N})$ of $M$ such that $\curv_{\bar N} > C$ (resp. $\curv_{\bar N} <C$). Moreover, assume that $\partial M$ is compact. If $\partial M$ is either strictly (mean) convex  or strictly (mean) concave, then $\bar N$ can be chosen so to have a boundary $\partial \bar N$ with the same property.
\end{corollary}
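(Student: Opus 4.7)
The plan is to use Theorem \ref{theorem-extension} as a black box to produce \emph{some} Riemannian extension $(N,g_N)$ of $(M,g_M)$ (completeness is not needed here), and then cut out a small neighborhood of $\pM$ inside $N$ on which the desired curvature condition is preserved by continuity. Since $\curv$ is a pointwise differential invariant of the metric, and $g_N$ coincides with $g_M$ on $M$, we have $\curv_N(x)=\curv_M(x)$ for every $x\in M$; in particular the strict inequality $\curv_N>C$ (resp.\ $<C$) holds on the closed set $M\subset N$. Because $\curv_N$ is a continuous function on $N$ (being a rational function of the metric and its derivatives), the open set $\{\curv_N>C\}$ (resp.\ $\{\curv_N<C\}$) is an open neighborhood of $M$ in $N$; one then extracts an open neighborhood $U$ of $\pM$ in $N\setminus\inte M$ on which the strict bound holds, and takes the extension to be $M\cup U$, or a slightly smaller smooth domain inside it.

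For the second part, assume $\pM$ is compact. By compactness of $\pM$, the normal exponential map $\,{}^N\!\exp^\perp\colon\pM\times(-\delta,\delta)\to N$ is a diffeomorphism onto a tubular neighborhood of $\pM$ in $N$ for some $\delta>0$, and I can shrink $\delta$ so that the image sits inside $U$. I then consider the parallel hypersurfaces $(\pM)_t={}^N\!\exp^\perp(\pM\times\{t\})$ and their shape operators $\mathcal S_t$. The eigenvalues $\lambda_j(t)$ depend continuously (indeed smoothly) on $t$ (this can be seen directly from the evolution of $\mathcal S_t$ along parallel hypersurfaces, governed by the Riccati-type equation recalled before the statement), so $\lambda_j(0)<0$ (resp.\ $>0$) combined with compactness of $\pM$ gives a uniform $0<\epsilon<\delta$ such that $\lambda_j(t)<0$ (resp.\ $>0$) for all $0\le t\le \epsilon$, all $j=1,\dots,m-1$, and all points of $(\pM)_t$. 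Exactly the same continuity argument works for the mean curvature function in place of the full set of principal curvatures.

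Setting $\bar N:=M\cup{}^N\!\exp^\perp(\pM\times[0,\epsilon])$, we obtain a smooth manifold with smooth boundary $\partial\bar N=(\pM)_\epsilon$ (after redefining the outward normal to point in the direction of increasing $t$, the sign convention for the principal curvatures is preserved), which is a Riemannian extension of $(M,g_M)$ enjoying both $\curv_{\bar N}>C$ (resp.\ $<C$) and the prescribed convexity or concavity type of its boundary.

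I do not expect a serious obstacle: the only nontrivial input is Theorem \ref{theorem-extension}, and everything else is a soft continuity/compactness argument. The one place that requires a small care is ensuring the collar $\,{}^N\!\exp^\perp(\pM\times[0,\epsilon])$ lies within the region where the strict curvature bound has already been secured, which is handled by first shrinking $U$ (hence $\delta$) and then shrinking $\epsilon$ to control the shape operator. No additional hypotheses on $(M,g_M)$ (such as completeness) are needed, consistent with the statement.
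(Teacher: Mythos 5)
Your argument is essentially the same as the paper's: produce some extension via Theorem \ref{theorem-extension}, use continuity of the curvature to secure the strict bound on a neighborhood of $\pM$, and use the normal exponential map together with continuity of the shape operators of the parallel hypersurfaces $(\pM)_t$ (and compactness of $\pM$) to propagate strict (mean) convexity/concavity to a nearby leaf $(\pM)_\epsilon$, then set $\bar N = M \cup {}^N\!\exp^\perp(\pM\times[0,\epsilon])$. The paper phrases the eigenvalue continuity via the Riccati evolution equation, but as you note, that equation is not actually used — it is the same soft continuity/compactness argument.
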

\begin{remark}
 \rm{
The situation is significantly more difficult if either we replace the strict inequalities with their weak counterparts or if we insist that the extended manifold is complete. In these cases, smooth extensions are not allowed in general. Results and examples along the second mentioned direction will be presented in Part \ref{part-nonexistence} and Part \ref{part_existence} of the paper.
}
\end{remark}

\subsection{Sobolev spaces}
In the geometric analysis on manifolds with boundary, the theory of (first order) Sobolev spaces, and the corresponding  density results, are vital to carry out PDE's constructions typical of the setting of manifolds without boundary. By way of example, we can mention the truncation method in order to obtain sub(super) solutions of Neumann problems for the Laplace operator and its applications to potential theory; see \cite{IPS}. In this respect, the Euclidean arguments work almost verbatim once we consider the manifold with boundary as a domain inside an ambient manifold without boundary. We are going to illustrate quickly this viewpoint by recovering a classical density result \'a la Meyers-Serrin; see e.g. \cite[Appendix A]{IPS}.\smallskip

Let $(M,g_{M})$ be a (possibly non-compact and incomplete) Riemannian manifold with boundary $\partial M \not = \emptyset$. Since $\inte M$ is a smooth manifold without boundary we can define, as usual, the space
\[
W^{1,p}(\inte M) = \{u:\inte M \to \rr: u \in L^{p}, \nabla u \in L^{p}\},
\]
where $\nabla u$ is the distributional gradient of $u$, endowed with the norm
\[
\| u \|_{W^{1,p}} = ( \| u \|^{p}_{L^{p}} + \| \nabla u \|^{p}_{L^{p}} )^{1/p}.
\]
Suppose now that $(M,g_{M})$ is complete and let $(N,g_{N})$ be a geodesically complete Riemannian extension without boundary. Fix a  locally finite, relatively compact, smooth atlas $\{(V_{j} , \varphi_{j})\}$ of $N$ such that either $V_{j} \cap M= \emptyset$ or $(V_{j} \cap M , \varphi_{j}|_{M})$ is a smooth chart of $M$. Without loss of generality, we can assume that $\varphi_{j}(V_{j}) = \bbb_{1} \subset \rr^{m}$ and (in case $V_{j} \cap \pM \not= \emptyset$) $\varphi_{j} (V_{j} \cap M) = \bbb^{+}_{1}$. We consider a partition of unity $\{\chi_k\}$ subordinated to the covering $\{V_k\}$ and, given a function $u \in W^{1,p}(\inte M)$, we decompose it as  $u= \sum_k u_k$ with $u_k = u \cdot \chi_k$. Now, for any fixed $\varepsilon >0$, applying in local coordinates the standard approximation procedure, e.g. \cite[Theorem 10.29]{Le}, we find $v_k \in C_{c}^{\infty}(U_k)$ such that
\[
\| u_k  - v_k\|_{W^{1,2}(\inte M)} \leq \frac{\varepsilon}{2^k}.
\]
Thus, the locally finite sum $v =\sum_k v_k$ is a function in $C^{\infty}(N)$ and gives an $\varepsilon$-approximation of $u$ in the space $W^{1,2}(\mathrm{int}M).$ This implies the partial result:
\begin{equation}\label{MS2}
 W^{1,p}(\inte M) = \overline{C^{\infty}( M) }^{\| \cdot \|_{W^{1,p}}}.
\end{equation}
Finally, we have to approximate $v|_{M}$ in $W^{1,p}(\inte M)$ with the restriction to $M$ of a function in $C^{\infty}_{c}(N)$. To this end take your favorite smooth function $\rho_{N} : N \to \rr_{>0}$ satisfying $\rho_{N} (\infty) = +\infty$ and $\| \nabla \rho_{N} \|_{L^{\infty}(N)} \leq L$. It can be obtained by regularizing the distance function by convolution methods; \cite{GW}. Moreover, choose  $\psi : \rr \to [0,1]$ to be any smooth function such that $\psi(t)  = 1$ if $t \leq 1$ and $\psi(t) = 0$ if  $t \leq 2$, and define the sequence $\psi_{k} := \psi (\rho_{N}/k)\in C^{\infty}_{c}(N)$. Then, $ \psi_{k}  \to 1$,  as $k \to +\infty$ uniformly on compact subsets of  $N$, and $\|\nabla \psi_{k}\|_{L^{\infty}(N)} \to 0$   as $k\to +\infty$.
It is then obvious, by dominated convergence, that  the sequence
\[
w_{k} = v \cdot \psi_{k}\in C^{\infty}_{c}(N)
\]
converges in $W^{1,p}(N)$ to $v$. By restriction, $w_{k}|_{M} \in C^{\infty}_{c}(M)$ converges to $v|_{M}$ in $W^{1,p}(\inte M)$. We have thus obtained the stronger density result:
\begin{corollary}\label{corollary-density}
Let $(M,g_M)$ be a complete Riemannian manifold with (possibly empty) boundary $\partial M$. Then
\begin{equation}\label{MS3}
  W^{1,p}(\inte M) = \overline{C_{c}^{\infty}( M) }^{\| \cdot \|_{W^{1,p}}}.
\end{equation}
\end{corollary}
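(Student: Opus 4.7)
The strategy is to reduce the density statement on $M$ to the classical Meyers--Serrin theorem on a boundaryless manifold by exploiting the extension machinery of Corollary \ref{th_extension}. I would fix a geodesically complete Riemannian extension $(N,g_{N})$ of $(M,g_{M})$ with $\partial N = \emptyset$, so that $M$ sits as a closed domain with smooth boundary inside a complete ambient, and then argue with charts and cutoffs exactly as one does in $\rr^{m}$.

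First I would establish the intermediate density $W^{1,p}(\inte M) = \overline{C^{\infty}(M)}^{\|\cdot\|_{W^{1,p}}}$. Choose a locally finite, relatively compact atlas $\{(V_{j},\varphi_{j})\}$ on $N$ such that each $\varphi_{j}$ maps $V_{j}$ onto the unit ball $\bbb_{1} \subset \rr^{m}$ and, when $V_{j}$ meets $\partial M$, sends $V_{j}\cap M$ onto the closed half-ball $\bbb_{1}^{+}$; such an atlas exists because $\partial M$ is a smooth embedded hypersurface in $N$. Pick a subordinate partition of unity $\{\chi_{j}\}$, decompose $u\in W^{1,p}(\inte M)$ as $u = \sum_{j} u\chi_{j}$, and in each chart apply the Euclidean Meyers--Serrin theorem to produce $v_{j}\in C_{c}^{\infty}(V_{j})$ with $\|u\chi_{j}-v_{j}\|_{W^{1,p}(\inte M)} \leq \varepsilon/2^{j}$. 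Local finiteness makes $v = \sum_{j} v_{j}$ a genuine element of $C^{\infty}(N)$, and the triangle inequality yields the required $\varepsilon$-approximation of $u$ by $v|_{M}\in C^{\infty}(M)$.

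Next I would upgrade smooth approximation to approximation by compactly supported smooth functions via a cutoff argument. Fix a smooth proper function $\rho_{N}:N\to\rr_{>0}$ with $\|\nabla \rho_{N}\|_{L^{\infty}(N)} \leq L <\infty$, obtained by regularizing the intrinsic distance from a basepoint through the Greene--Wu convolution procedure \cite{GW}; this is precisely where the geodesic completeness of $N$ is exploited. For a cutoff $\psi:\rr \to [0,1]$ equal to $1$ on $(-\infty,1]$ and to $0$ on $[2,\infty)$, the sequence $\psi_{k}:= \psi(\rho_{N}/k)\in C_{c}^{\infty}(N)$ satisfies $\psi_{k}\to 1$ locally uniformly and $\|\nabla \psi_{k}\|_{L^{\infty}(N)} \to 0$. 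Dominated convergence then shows that $w_{k} := v\psi_{k}\to v$ in $W^{1,p}(N)$, whence $w_{k}|_{M}\in C_{c}^{\infty}(M)$ approximates $v|_{M}$ in $W^{1,p}(\inte M)$. A standard diagonal extraction combining this with the previous step delivers the claimed density.

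The only real obstacle is a packaged one: producing the proper smooth function $\rho_{N}$ with globally bounded gradient on $N$. Without the extension provided by Corollary \ref{th_extension}, one would be forced to smooth the distance function on the manifold with boundary $M$ itself, where the regularization kernel must be kept away from $\partial M$ and where completeness alone does not easily yield smooth proper exhaustions with Lipschitz control. Delegating these issues to the boundaryless geodesically complete space $N$ is the entire leverage of the extension theorem, after which the proof falls into the standard Meyers--Serrin template.
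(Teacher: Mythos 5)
Your proposal is correct and follows the paper's own argument essentially verbatim: extend to a geodesically complete boundaryless $(N,g_N)$ via Corollary \ref{th_extension}, prove the intermediate density $W^{1,p}(\inte M) = \overline{C^{\infty}(M)}^{\|\cdot\|_{W^{1,p}}}$ by a locally finite atlas of half-ball charts near $\partial M$, a partition of unity, and the Euclidean Meyers--Serrin approximation in coordinates, and then pass to compactly supported approximants via the Greene--Wu proper smooth exhaustion $\rho_N$ with bounded gradient and the cutoffs $\psi_k = \psi(\rho_N/k)$. You even highlight the same leverage point as the paper, namely that obtaining a smooth proper Lipschitz exhaustion is delicate on a manifold with boundary but classical on the geodesically complete ambient $N$ (a fact the paper records separately as Lemma \ref{lem_exhaustion}); no substantive differences from the paper's proof.
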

As a side product, observe that by taking $\rho_{M} = \rho_{N}|_{M}$ we  also obtain the existence of a smooth, globally Lipschitz, exhaustion function on any complete manifold with boundary.
\begin{lemma}\label{lem_exhaustion}
 Let $(M,g_{M})$ be a complete Riemannian manifold with boundary $\partial M \not= \emptyset$. Then, there exists a smooth function $\rho_{M} : M \to \rr_{>0}$  satisfying
\begin{equation}\label{exhaustion}
\rho_{M} (\infty) = +\infty; \quad \| \nabla \rho_{M} \|_{L^{\infty}(M)} \leq L.
\end{equation}
\end{lemma}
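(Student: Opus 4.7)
The plan is to derive Lemma \ref{lem_exhaustion} directly from Corollary \ref{th_extension} combined with the standard Greene--Wu smoothing of the distance function on a complete manifold without boundary, exactly as the authors hint in the paragraph preceding the statement.

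First I would invoke Corollary \ref{th_extension} to produce a geodesically complete Riemannian extension $(N,g_N)$ of $(M,g_M)$ with $\partial N=\emptyset$. Fix a base point $p_0\in\inte M\subset N$. On $N$, the function $d_{(N,g_N)}(\cdot,p_0)$ is $1$-Lipschitz and proper (by Theorem \ref{th_HR}). The Greene--Wu convolution technique \cite{GW} then yields a smooth function $\rho_N:N\to\R_{>0}$ satisfying
\[
|\rho_N(x)-d_{(N,g_N)}(x,p_0)|\leq 1,\qquad \|\nabla \rho_N\|_{L^\infty(N)}\leq L,
\]
for some constant $L>0$ (one may take $L=2$). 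The candidate exhaustion function is simply $\rho_M:=\rho_N|_M$, which is smooth on $M$ up to the boundary since $\rho_N$ is smooth on the open set $N$ containing $M$.

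Two properties must be verified. The gradient bound is immediate: for every $p\in M$ the tangent space $T_pM$ agrees with $T_pN$ and the metric $g_M$ is the restriction of $g_N$, so $\nabla_M\rho_M$ coincides pointwise with $\nabla_N\rho_N$ along $M$, and hence $\|\nabla \rho_M\|_{L^\infty(M)}\leq L$. The main point is the exhaustion property $\rho_M(\infty)=+\infty$, which I would argue as follows. The subset $M\subset N$ is closed, since $N\setminus M$ is the interior of the glued piece $Q$ (see Lemma \ref{lemma-extension-1}) and is therefore open. Let $\{p_n\}\subset M$ be a sequence leaving every compact subset of $(M,d_{(M,g_M)})$. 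I claim that $\{p_n\}$ also leaves every compact subset of $(N,d_{(N,g_N)})$. Suppose not: then up to a subsequence $p_n\to p$ in $N$, and since $M$ is closed, $p\in M$. Using the bi-Lipschitz equivalence between $(V\cap M, d_{(M,g_M)})$ and a Euclidean upper half-ball around $p$ established in the proof of Lemma \ref{lemma-length}, we deduce $p_n\to p$ in $(M,d_{(M,g_M)})$ as well, contradicting the assumption that $\{p_n\}$ leaves every compact of $M$. Therefore $d_{(N,g_N)}(p_n,p_0)\to+\infty$ and consequently $\rho_M(p_n)\to+\infty$.

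The substantive obstacle is the transfer of compactness from $N$ to $M$ in step (i); once this is handled by invoking that $M$ is closed in $N$ together with the local bi-Lipschitz equivalence of $d_{(M,g_M)}$ and $d_{(N,g_N)}|_M$ at boundary points, the lemma follows without further work.
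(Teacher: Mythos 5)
Your proof follows the same route the paper takes: extend $(M,g_M)$ to a complete boundaryless $(N,g_N)$ via Corollary \ref{th_extension}, smooth the distance function on $N$ by the Greene--Wu convolution to get $\rho_N$, and set $\rho_M=\rho_N|_M$. The paper treats the exhaustion property of the restriction as immediate, whereas you spell out the (correct and easy) verification that a sequence leaving every compact of $M$ also leaves every compact of $N$; otherwise the arguments coincide.
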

The proof of this fact is not completely obvious if we use the pure viewpoint of manifolds with boundary. The mollification procedure used to regularize a given Lipschitz function (e.g. the intrinsic distance function) requires some care.

\subsection{Proper Nash embedding} The classical formulation of the Nash embedding theorem states that any Riemannian manifold $(N,g_{N})$ can be isometrically embedded in some Euclidean space $\rr^{\ell}$, where $\ell = \ell(\dim N)$. In the ``survey'' part of the paper \cite{GR} it is claimed that the embedding can be chosen to be proper if $N$ is geodesically complete and, moreover, that the Nash embedding holds also for manifolds with boundary. An elementary, but clever, proof of the first claim can be found in \cite{Mul}. Here, we point out that the second claim can be trivially deduced from the first one,  by restricting to the manifold with boundary a proper isometric embedding of a complete Riemannian extension. Alternatively, we can adapt the direct argument in \cite{Mul} to the case of non-trivial boundary.
\begin{corollary}\label{corollary-nash}
Let $(M,g_{M})$ be a complete Riemannian manifold with boundary $\partial M \not= \emptyset$. Then, there exists a proper isometric embedding of $M$ into some Euclidean space  $\rr^{\ell}$ where $\ell = \ell(\dim M)$.
\end{corollary}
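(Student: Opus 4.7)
The plan is to reduce to the boundaryless case by combining the complete extension provided by Corollary \ref{th_extension} with the proper version of Nash's theorem established in \cite{Mul}. First, I would apply Corollary \ref{th_extension} to the complete Riemannian manifold $(M,g_M)$ to produce a geodesically complete Riemannian extension $(N,g_N)$ of the same dimension $m$ with $\partial N = \emptyset$. Next, I would invoke Müller's refinement of Nash's embedding theorem, which guarantees a smooth, \emph{proper}, isometric embedding $F: N \hookrightarrow \mathbb{R}^\ell$ for some $\ell = \ell(m) = \ell(\dim M)$.

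The candidate embedding for $M$ is then simply the restriction $F|_M$. It is automatically a smooth isometric immersion, and injectivity is inherited from $F$. To upgrade it to a proper embedding, I would observe that $M$, being an $m$-dimensional submanifold with boundary of the boundaryless $m$-manifold $N$, is a closed subset of $N$: each boundary point of $M$ lies in a chart of $N$ in which $M$ corresponds to a closed half-space, so the complement $N\setminus M$ is open. Consequently, for any compact set $K\subset \mathbb{R}^\ell$,
\[
(F|_M)^{-1}(K) = F^{-1}(K) \cap M
\]
is closed in the compact set $F^{-1}(K)$, hence compact; this gives properness of $F|_M$. Since $F|_M$ is, in addition, injective and continuous on the Hausdorff space $M$, properness promotes it to a closed topological embedding, and therefore to a proper smooth isometric embedding of $M$ into $\mathbb{R}^\ell$.

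Conceptually, the whole statement reduces to a brief assembly of the two main external inputs, so there is no genuine obstacle to overcome. The only minor verification is the elementary topological fact that $M$ is closed in $N$, which in turn is what makes restriction of a proper map to a proper map. As the excerpt itself points out, an alternative route would be to adapt directly Müller's construction in \cite{Mul} to the setting of manifolds with boundary, but the reduction via Corollary \ref{th_extension} is considerably cleaner and illustrates the general philosophy of Part 1 of the paper, namely that many results for boundaryless complete manifolds can be transplanted to the boundary case by restriction from a complete extension.
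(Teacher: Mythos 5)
Your argument is correct in outline, and it is in fact the ``trivial'' route that the paper itself advertises in the sentence just \emph{before} the corollary (``the second claim can be trivially deduced from the first one, by restricting to the manifold with boundary a proper isometric embedding of a complete Riemannian extension''). However, the \emph{actual} proof carried out in the paper's \texttt{proof} environment is the \emph{other} route mentioned in that same sentence, namely a direct adaptation of M\"uller's argument: one first applies the ordinary Nash theorem to get an isometric embedding $j$ of the modified metric $\tilde g = g_M - d\varrho\otimes d\varrho$ (where $\varrho$ is the globally Lipschitz exhaustion from Lemma \ref{lem_exhaustion}, rescaled so that $\|\nabla\varrho\|_\infty\le 1/2$), and then takes $i=(j,\varrho)$, which is proper because its last coordinate is an exhaustion function. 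So the two approaches genuinely differ: yours reduces properness to M\"uller's theorem applied to the boundaryless extension $(N,g_N)$ from Corollary \ref{th_extension} and then restricts; theirs reproduces M\"uller's trick internally on $M$, using the existence of the exhaustion (which in turn was deduced earlier from the complete extension). Both paths ultimately rest on the extension theorem, so neither is really more self-contained; your version is arguably cleaner conceptually, while theirs avoids invoking M\"uller's \emph{proper} version of Nash as a black box on $N$, using only ordinary Nash plus the exhaustion.

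One detail in your write-up is slightly too glib. You assert that ``$M$, being an $m$-dimensional submanifold with boundary of the boundaryless $m$-manifold $N$, is a closed subset of $N$,'' and you justify this only by noting that boundary points of $M$ have half-space charts. That alone is not sufficient: an $m$-dimensional submanifold with boundary of an $m$-manifold need not be closed (consider $\{0<x<1,\ y\ge 0\}\subset\R^2$), because the half-space chart condition at $\partial M$ does not by itself rule out accumulation points of $M$ lying outside $M$. What actually makes $M$ closed in the extension of Corollary \ref{th_extension} is the gluing construction itself: by Theorem \ref{theorem-extension}, $N=M\cup_\eta Q$ with $N\setminus M=\operatorname{int} Q$, which is open by construction. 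You should cite this (or, alternatively, argue from the metric completeness of $(M,g_M)$). Once $M$ is known to be closed in $N$, the rest of your properness argument -- that $(F|_M)^{-1}(K)=F^{-1}(K)\cap M$ is a closed subset of the compact set $F^{-1}(K)$ -- is airtight, and injectivity plus properness do indeed yield a closed embedding.
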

\begin{proof}
We preliminarly observe that any Riemannian manifold with boundary has an isometric embeddings into some Euclidean space $\rr^{n}$. This follows by applying the usual Nash embedding to a Riemannian extension.

Now, let $\varrho : M \to \rr_{>0}$ be the exhaustion function of Lemma \ref {lem_exhaustion}. Up to a dilation  we can assume that $\| \nabla \varrho \|_{\infty} \leq 1/2$.  Define on $M$ the new Riemannian metric $\tilde g = g - d\varrho \otimes d\varrho$. Then, there exists an isometric embedding  $j:(M,\tilde g) \hookrightarrow \rr^{n}$, for some $n$.  It follows that $i = (j,\varrho):(M,g) \hookrightarrow \rr^{n+1}$ is a proper isometric embedding, as desired.
\end{proof}

\part{Nonexistence of complete  extensions under curvature conditions}\label{section-nonexistence}\label{part-nonexistence}

This part is devoted to a phenomenological investigation concerning possibile obstructions to the existence of complete Riemannian extensions with controlled curvature. We shall explore techniques of different nature that are sensitive of the topology of the original piece with boundary and that can be used to construct counterexamples to the extension problem.  In this direction it is appropriate to start with the simple observation that, unlike the boundaryless case, compact manifolds with boundary always support Riemannian metrics whose sectional curvature has a prescribed sign.
\begin{lemma}[Gromov]\label{gromov}
Let $M$ be a smooth, $m\geq 2$-dimensional manifold with boundary $\partial M \not=\emptyset$.  Then, there exists a Riemannian metric $g_{+}$ on $M$ such that $\sect_{g_{+}} >0$ and a Riemannian metric $g_{-}$ satisfying $\sect_{g_{-}} <0$.
\end{lemma}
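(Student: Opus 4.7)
The key topological fact I will exploit is that since $\partial M \neq \emptyset$ and $\dim M = m \geq 2$, the compact manifold $M$ admits a handle decomposition with no handle of top index $m$. Equivalently, a proper Morse function $f:M\to[0,1]$ with $f^{-1}(0)=\partial M$ can be chosen to have critical points of index at most $m-1$, so that, dually, $M$ is built up from a $0$-handle by attaching handles of index at most $m-1$. My plan is to construct $g_{+}$ and $g_{-}$ inductively along such a decomposition, using constant-curvature models.

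The base step is to equip the initial $0$-handle, diffeomorphic to a closed ball $D^{m}$, with the metric of a small spherical cap in $\mathbb{S}^{m}$ (for $g_{+}$) or a small hyperbolic ball in $\mathbb{H}^{m}$ (for $g_{-}$); both have strictly signed constant sectional curvature and strictly convex boundary. For an $i$-handle with $1\leq i \leq m-1$, modeled on $D^{i}\times D^{m-i}$, I would put a warped-product-type metric of the form $ds^{2}+\varphi(s)^{2}g_{S^{i-1}}+\psi(s)^{2} g_{D^{m-i}}$ in Fermi-type coordinates around a core copy of $D^{m-i}$. The crucial point is that $m-i\geq 1$, so there are genuinely transverse directions to tune $\varphi$ and $\psi$ so that all radial and tangential sectional curvatures on the handle have the required strict sign, and so that the induced metric on the attaching region $S^{i-1}\times D^{m-i}$ matches, along a collar, the metric already built on the previous stage of the construction.

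The main obstacle, and the technical heart of the argument, is to show that the gluing across the handle-attaching region can be smoothed while strictly preserving the sign of the sectional curvature. A naive convex combination of two metrics with $\sect>0$ (resp. $\sect<0$) does not, in general, retain the sign. I would deal with this by making the transition region as thin as needed and by using the explicit warped-product formulas for sectional curvature (of the form $-\varphi''/\varphi$, $(1-(\varphi')^{2})/\varphi^{2}$, and their mixed counterparts) to dominate the perturbation terms introduced by the interpolation. Because the handle is of index $i\leq m-1$, there is always at least one free warped direction in which to absorb these error terms.

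The constructions for $g_{+}$ and $g_{-}$ are entirely parallel: one replaces the spherical model $\mathbb{S}^{m}$ by the hyperbolic model $\mathbb{H}^{m}$, swaps the relevant $\sin$-like warping functions for $\sinh$-like ones, and flips the sign convention on the derivatives controlling convexity of level sets. In both cases the iteration terminates after finitely many handle attachments since $M$ is compact, and the resulting smooth metric on $M$ has strictly signed sectional curvature by construction. Observe that no completeness is required of the final metric, which is what makes the strict inequalities attainable in both signs simultaneously, in contrast with the closed manifold setting.
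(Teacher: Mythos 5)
Your proposal takes a genuinely different route from the paper's, which is much shorter: the paper invokes Theorem~\ref{theorem-extension} (taking the added piece $Q$ noncompact) to realize $M$ as a domain inside an \emph{open} manifold $N$ without boundary, then applies Gromov's flexibility theorem for open manifolds \cite{Gro} --- the conditions $\sect>0$ and $\sect<0$ are open, $\mathrm{Diff}$-invariant differential relations, so every open manifold carries a metric of either prescribed sign of sectional curvature --- and finally restricts that metric to $M$. That argument requires no compactness of $M$; your handle-decomposition argument tacitly assumes it (finitely many handles, a proper Morse function with values in $[0,1]$), though the lemma's statement does not.

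The more serious gap is the gluing step, which you correctly flag as the technical heart but do not actually carry out, and your proposed fix points in the wrong direction: a \emph{thin} transition collar makes curvature control harder, not easier. If two warping functions $\varphi_1,\varphi_2$ with $\varphi_1'(t_0)\neq\varphi_2'(t_0)$ are interpolated over a collar of width $\epsilon$, the interpolant has $\varphi''$ of size roughly $(\varphi_2'-\varphi_1')/\epsilon$, so the radial sectional curvature $-\varphi''/\varphi$ blows up with a sign determined by the difference of first derivatives. Whether that sign is the one you want is exactly a compatibility condition on the second fundamental forms of the interfaces being glued, governed by the Riccati equation the paper recalls in Section~2.1,
\[
\frac{d\lambda_j}{dt}=\lambda_j^2+\sect(\nu\wedge E_j),
\]
which forces the principal curvatures of normal parallel hypersurfaces to evolve monotonically when $\sect$ has a fixed sign. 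So the second fundamental form of each attaching region $S^{i-1}\times D^{m-i}$, as induced by the already-built piece and as induced by the model handle, must sit on the correct side of this monotone evolution. You neither state nor verify this constraint, the ``free'' warped factor $D^{m-i}$ does not obviously absorb it (the blow-up lives in the collar direction, not in the extra warped directions), and it is not clear it can always be met. This is precisely where the content of the lemma lies, so the argument is incomplete; it is also why the standard proof reaches for the h-principle rather than an explicit handle-by-handle construction, which is known to work for scalar and, with considerable effort, Ricci curvature, but not in this generality for sectional curvature.
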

\begin{proof}
Using Theorem \ref{theorem-extension}, we enlarge $M$ past its boundary so to obtain an open Riemannian manifold $N$ containing $M$ as an isometric domain. Next, we apply to $N$ the classical  existence theorem by Gromov, \cite{Gro}, and we restrict the corresponding Riemannian metric to $M$.
\end{proof}
A crucial point is that, with respect to the prescribed Riemannian metric, the boundary could have a wild submanifold geometry. For instance, a convexity condition would immediately force a control on the topology.  In all that follows we will make use of the following convention:
\begin{definition}
Let $(M,g)$ be an $m$-dimensional Riemannian manifold with boundary $\partial M \not=\emptyset$ and outward pointing unit normal $\nu$. The second fundamental form of $\partial M$ is the symmetric bilinear form on $T\partial M$ given by $\mathrm{II}(X,Y) = g(-D_{X}\nu,Y)$ and $\partial M$ is said to be (strictly) convex if the eigenvalues of $\mathrm{II}$ are (strictly) negative. This condition is written as $\mathrm{II} \leq 0$ (resp. $<0$), the inequality being understood in the sense of quadratic forms.
\end{definition}
A result by D. Gromoll, later extended in \cite{Wan}, states that a compact manifold with strictly convex boundary and strictly positive sectional curvature is diffeomorphic to a Euclidean ball, hence it is contractible. In the Ricci curvature setting, we remark the following fact, see \cite{RaSa-JGA}, that can be compared with the results of Section \ref{section-nonexistence-ricci} and Section \ref{section-existence-convex}.
\begin{proposition}
Let $(M,g)$ be a compact, $m\geq 2 $-dimensional Riemannian manifold with boundary $\partial M \not=\emptyset$ and satisfying $\ric_{g} \geq 0$ on $M$. If either \smallskip
 
\noindent $(a)$ $\mathrm{II} \leq 0$ on $\partial M$ and $\ric_{g}>0$ at some $x_{0} \in \inte M$,\smallskip

or \smallskip

\noindent $(b)$ $\mathrm{II} < 0$ on $\partial M$ \smallskip

\noindent then, the first singular homology group $H_{1}(M,\zz)$ is torsion. In particular, the fundamental group $\pi_{1}(M)$ of $M$ cannot contain $\zz^{k}$, $k \geq 1$, as a free or (semi)direct factor.
\end{proposition}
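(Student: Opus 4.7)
The plan is to show directly that $H^{1}(M;\R)=0$ via a Bochner--Reilly type argument for harmonic $1$-forms on the manifold with boundary $M$, from which the proposition will follow. Once $H^{1}(M;\R)=0$, the universal coefficient theorem yields $H_{1}(M;\zz)\otimes\R\cong H_{1}(M;\R)=0$, so $H_{1}(M;\zz)$ has vanishing rational rank, i.e.\ is torsion. For the ``in particular'' part, if $\pi_{1}(M)$ admits $\zz^{k}$ as a free or direct factor with $k\geq 1$, then since abelianization commutes with free and direct products the abelianization $H_{1}(M;\zz)$ contains $\zz^{k}$ as a direct summand, contradicting torsion; analogously, if $\pi_{1}(M)$ splits as a semidirect product $H\rtimes \zz^{k}$ (so that it surjects onto $\zz^{k}$), the induced surjection $H_{1}(M;\zz)\twoheadrightarrow \zz^{k}$ already obstructs torsion.

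To obtain the vanishing of $H^{1}(M;\R)$, I represent each cohomology class by its unique harmonic representative with absolute (Neumann) boundary conditions, granted by the Hodge--Morrey--Friedrichs decomposition on a compact Riemannian manifold with boundary: a $1$-form $\omega$ with $d\omega=0$, $\delta\omega=0$ on $M$ and $\iota_{\nu}\omega=0$ on $\partial M$. The Weitzenb\"ock identity $\nabla^{*}\nabla\omega=-\ric(\omega^{\sharp},\cdot)$ for such harmonic $\omega$, paired with $\omega$ and integrated by parts on $M$, produces a boundary contribution $\int_{\partial M}\langle\nabla_{\nu}\omega,\omega\rangle\,d\sigma$. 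To evaluate it, I differentiate $\omega(\nu)\equiv 0$ tangentially: for any $X$ tangent to $\partial M$,
\[ (\nabla_{X}\omega)(\nu)=-\omega(\nabla_{X}\nu)=\II(\omega^{\sharp},X), \]
under the convention $\II(X,Y)=g(-D_{X}\nu,Y)$ adopted in the paper, and the closedness relation $d\omega(\nu,X)=0$ forces $(\nabla_{\nu}\omega)(X)=(\nabla_{X}\omega)(\nu)=\II(\omega^{\sharp},X)$. Since the $\nu$-component of $\omega$ vanishes on $\partial M$, summing over a tangential orthonormal frame gives $\langle \nabla_{\nu}\omega,\omega\rangle|_{\partial M}=\II(\omega^{\sharp},\omega^{\sharp})$, and the Bochner--Reilly identity reads
\[ \int_{M}\bigl(|\nabla\omega|^{2}+\ric(\omega^{\sharp},\omega^{\sharp})\bigr)\,dv=\int_{\partial M}\II(\omega^{\sharp},\omega^{\sharp})\,d\sigma. \]

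Under either hypothesis the left-hand side is non-negative and the right-hand side is non-positive, so each integrand vanishes pointwise: $\omega$ is parallel on $M$, $\ric(\omega^{\sharp},\omega^{\sharp})\equiv 0$, and $\II(\omega^{\sharp},\omega^{\sharp})\equiv 0$ on $\partial M$. In case (a), if $\omega\not\equiv 0$ then $|\omega|$ is a positive constant and $\ric_{g}(\omega^{\sharp},\omega^{\sharp})(x_{0})>0$ contradicts the vanishing of the Ricci integrand. In case (b), the strict negativity of $\II$ forces $\omega^{\sharp}\equiv 0$ on $\partial M$; together with $\omega(\nu)=0$ this gives $\omega|_{\partial M}=0$, and parallel transport along any path from $\partial M$ propagates the vanishing to all of the connected manifold $M$. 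Either way, $\omega\equiv 0$, hence $H^{1}(M;\R)=0$ and the proposition follows. The main technical care lies in the sign-correct derivation of the Reilly boundary term: it is precisely the convention $\II\leq 0\Leftrightarrow\text{convex}$ adopted in the paper that makes the boundary integral non-positive and drives the argument.
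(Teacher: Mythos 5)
Your proof is correct and takes essentially the same route as the paper: harmonic representative with Neumann/absolute boundary conditions, a Bochner--Reilly identity whose boundary term is controlled by the sign of $\II$, and the universal coefficient theorem plus the Hurewicz isomorphism for the topological conclusion. The only difference is that you carry out the Reilly boundary computation $\langle\nabla_{\nu}\omega,\omega\rangle|_{\partial M}=\II(\omega^{\sharp},\omega^{\sharp})$ explicitly, whereas the paper delegates this to the cited Reilly formula for differential forms.
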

\begin{proof}
There is an Hodge-de Rham theory for manifolds with boundary, \cite{Co-Memoirs}, according to which each de Rham cohomology class of $\mathrm{int}M$ is represented by a harmonic form with Neumann conditions. This follows from an energy minimization procedure in the homology class without boundary restrictions. In particular, given $[\omega] \in H^{1}_{\mathrm{dR}}(\inte M)$, there exists a $1$-form $\xi \in [\omega]$ such that $\Delta_{H} \xi = 0$ and $\xi(\nu) =0$, where $\Delta_{H}$ denotes the Hodge Laplacian. By assuming that the boundary is (weakly) convex $\mathrm{II} \leq 0$, an extension of the Reilly formula to differential forms, \cite{RaSa-JGA}, shows that $\xi$ is parallel. In fact, in case $(b)$, necessarily $\xi = 0$. The same conclusion holds also in case $(a)$ by using Bochner formula. Thus, by de Rham isomorphism, the first real Betti number of $\inte M$ vanishes. Since $M$ has the same homotopy type of $\mathrm{int}M$,  the same holds for $M$ and  the conclusion follows from the universal coefficient theorem and by recalling that the first singular homology group is the Abelianization of the fundamental group.
\end{proof}

The previous discussion leads to the following, a-posteriori obvious, conclusions:
\begin{itemize}
 \item [i)] if we are mainly interested in producing counterexamples to the existence of complete extensions with curvature controls on the base of the topology of the original piece with boundary, it is natural to forget the submanifold geometry of the boundary in order to have much more flexibility. 
 \item [ii)] if we are interested in developing an existence  theory involving the topology of the manifold with boundary, the submanifold geometry of the boundary must be taken under consideration and should play a decisive role.
\end{itemize}

\section{Nonexistence of complete extensions with $\ric \geq C$}\label{section-nonexistence-ricci}
Topological obstructions to the existence of complete metrics with Ricci curvature lower bounds are naturally related to the growth of the fundamental group of the space. Classical tools to detect these obstructions with a nonnegative lower bound are represented by the \v Svarc-Milnor(-Anderson) theory combined with the Bishop-Gromov volume comparison, and the splitting result by Cheeger-Gromoll. Actually, also the harmonic mapping theory  plays a relevant role in this context. On the other hand, as first pointed out by Gromov,  introducing the concept of entropy  in the \v Svarc-Milnor picture allows one to get crucial information even in the case of negative lower bounds. We are going to use these tools to show that, in general, the existence of a complete Riemannian extension with controlled Ricci curvature is prevented by the too much large growth of the fundamental group of the original (compact) manifold with boundary. Concrete examples will be provided for each theoretical result. As a bypass product  we will obtain that, in general, no reasonable Bishop-Gromov type estimates hold for manifolds with bad boundary geometry, as already remarked for instance in \cite{GM}. In particular, these manifolds have no Ricci lower bounds in the sense of the classical singular theory of metric measure spaces.

\subsection{Nonexistence of $\ric \geq 0$ extensions}
Recall that a Riemannian manifold $(M,g)$ with boundary $\partial M \not= \emptyset$ is a length  metric space with respect to its intrinsic distance $d_{g}$. Moreover, if $(M, d_{g})$ is metrically complete then it is a proper geodesic space, i.e., closed metric balls are compact and any couple of points is connected by a minimizing geodesic. Consider the universal Riemannian covering $P:(\tilde M, \tilde g) \to (M,g)$ of $(M,g)$. The Riemannian projection map $P$ gives rise to a metric local isometry $(\tilde M ,d_{\tilde g}) \to (M,d_{g})$. Indeed, using that $P$ preserves the Riemannian lengths, it is easy to see that the length structure of $(\tilde M ,d_{\tilde g})$ is obtained precisely by lifting  via $P$ the length structure of $(M,d_{g})$. Since metric completeness lifts from the base to the covering and conversely, it follows from the Hopf-Rinow theorem that the locally compact length space $(\tilde M ,d_{\tilde g})$ is in fact a geodesic space if and only if so is $(M,d_{g})$. \smallskip

In the following, we summarize some basic facts from the \v{S}varc-Milnor theory for geodesic metric spaces endowed with invariant measures. In particular, it applies to complete Riemannian manifolds with (possibly empty) boundary. 
\begin{theorem}\label{th-SM}
 Let $(X,d)$ be a proper, geodesic, metric space with finitely generated fundamental group $G=\pi_{1}(X)$. Let the universal covering space $\tilde X$ of $X$ be endowed with the lifted length structure $\tilde d$ that makes $P: (\tilde X, \tilde d) \to (X,d)$ a metric local isometry. Then:
\begin{enumerate}
 \item [(a)] $G$ acts freely and properly by isometries on $(\tilde X ,\tilde d)$.
 \item [(b)] Assume that $\tilde \mu$ is a regular Borel measure on $(\tilde X ,\tilde d)$ such that the metric balls have positive measure. If $G$ acts by measure preserving isometries on the metric measure space $(\tilde X ,\tilde d, \tilde \mu)$, then, for any fixed $\tilde x_{0} \in \tilde X$, there exist constants $\alpha,\beta>1$ such that
 \begin{equation}\label{SM-volume}
 |{\mathcal{B}}^{G}_{R}(1)| \leq  \alpha \tilde \mu \left( B^{\tilde X}_{\beta R}(\tilde x_{0}) \right),
 \end{equation}
 for every $R \gg 1$. Here, $ {\mathcal{B}}^{G}_{R}(1)$ denotes the metric ball of $G$ with respect to a fixed finite set of generators.
 \item [(c)] If $X$ is compact, then $G$ is finitely generated and, once it is  endowed with the word metric w.r.t.  a finite set of generators, it is quasi-isometric to $(\tilde X ,\tilde d)$. Moreover \eqref{SM-volume} is completed by the lower estimate
 \[
 \alpha^{-1} \tilde \mu \left(  B^{\tilde X}_{\beta^{-1}R}(\tilde x_{0})\right) \leq | {\mathcal{B}}^{G}_{R}(1)| 
 \]
\end{enumerate}
\end{theorem}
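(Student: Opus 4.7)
The plan is to treat (a), (b), (c) in order, recasting the classical \v{S}varc--Milnor argument in the setting of proper geodesic length spaces so that the only ingredient beyond soft topology is the existence of minimizing geodesics in $(\tilde X,\tilde d)$, which is guaranteed by applying Theorem \ref{th_HR} to the lifted metric once $(X,d)$ is complete.

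\textbf{Part (a).} The universal cover $P\colon \tilde X\to X$ carries the canonical free and properly discontinuous action of the deck group, which is identified with $G=\pi_1(X)$. Since $P$ is a metric local isometry with respect to the lifted length structure, every deck transformation $g$ is an isometry of $(\tilde X,\tilde d)$: indeed, the $\tilde d$-length of any rectifiable curve $\tilde\gamma$ equals the $d$-length of $P\circ\tilde\gamma$, and $g$ permutes the $P$-preimages of a given curve without altering their lengths. Topological properness together with the local isometry property implies properness in the metric sense, i.e., for each compact $K\subset\tilde X$ only finitely many $g\in G$ satisfy $gK\cap K\neq\emptyset$; freeness is forced by the simple-connectedness of $\tilde X$. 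In particular, each orbit is discrete and closed in $(\tilde X,\tilde d)$.

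\textbf{Part (b).} Fix $\tilde x_0\in\tilde X$ and a finite symmetric generating set $S=\{s_1,\ldots,s_k\}$ of $G$; put $D:=\max_i\tilde d(\tilde x_0,s_i\tilde x_0)$. The triangle inequality gives $\tilde d(\tilde x_0, g\tilde x_0)\leq RD$ for every $g\in\mathcal{B}^G_R(1)$, so the orbit points $g\tilde x_0$ lie in $B^{\tilde X}_{RD}(\tilde x_0)$. By (a) the quantity $\delta:=\tfrac12\min_{g\neq 1}\tilde d(\tilde x_0,g\tilde x_0)$ is strictly positive, and then the balls $B^{\tilde X}_\delta(g\tilde x_0)$, $g\in G$, are pairwise disjoint. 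Since $\tilde\mu$ is $G$-invariant, each of them has positive measure $\tilde\mu(B^{\tilde X}_\delta(\tilde x_0))=:v_0$. Combining disjointness with the inclusion $\bigsqcup_{g\in\mathcal{B}^G_R(1)} B^{\tilde X}_\delta(g\tilde x_0)\subset B^{\tilde X}_{RD+\delta}(\tilde x_0)$ yields
\[
|\mathcal{B}^G_R(1)|\,v_0\leq \tilde\mu\bigl(B^{\tilde X}_{RD+\delta}(\tilde x_0)\bigr)\leq \tilde\mu\bigl(B^{\tilde X}_{(D+\delta)R}(\tilde x_0)\bigr)
\]
for every $R\geq 1$, which is \eqref{SM-volume} with $\alpha=v_0^{-1}$ and $\beta=D+\delta$.

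\textbf{Part (c).} If $X$ is compact, $\diam(X,d)<\infty$, so the orbit $G\cdot\tilde x_0$ is $C$-dense in $\tilde X$ for any $C>\diam(X,d)$. Set $S:=\{g\neq 1:\tilde d(\tilde x_0,g\tilde x_0)\leq 2C+1\}$; properness gives $|S|<\infty$. I would then check that $S$ generates $G$ and that $g\mapsto g\tilde x_0$ is a quasi-isometry $(G,d_S)\to(\tilde X,\tilde d)$ by the standard path-subdivision trick: given $g\in G$, choose a $\tilde d$-minimizing geodesic from $\tilde x_0$ to $g\tilde x_0$ (available by Theorem \ref{th_HR}), subdivide it into segments of length $\leq 1$ with endpoints $\tilde x_0=\tilde y_0,\tilde y_1,\ldots,\tilde y_N=g\tilde x_0$, select $g_i\in G$ realizing $\tilde d(\tilde y_i,g_i\tilde x_0)\leq C$ with $g_0=1$, $g_N=g$, and verify that each $g_{i-1}^{-1}g_i\in S$. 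This expresses $g$ as a word of length $N\lesssim \tilde d(\tilde x_0,g\tilde x_0)$, which yields both finite generation and the upper quasi-isometry bound. For the matching lower estimate on $|\mathcal{B}^G_R(1)|$, every point of $B^{\tilde X}_{\beta^{-1}R}(\tilde x_0)$ lies within $C$ of some orbit point $g\tilde x_0$ with $g$ of word-length $\leq R$, so $G$-translates of $B^{\tilde X}_{C+1}(\tilde x_0)$ centered at such orbit points cover $B^{\tilde X}_{\beta^{-1}R}(\tilde x_0)$, giving
\[
\tilde\mu\bigl(B^{\tilde X}_{\beta^{-1}R}(\tilde x_0)\bigr)\leq |\mathcal{B}^G_R(1)|\cdot \tilde\mu\bigl(B^{\tilde X}_{C+1}(\tilde x_0)\bigr),
\]
which is the desired inequality after absorbing the constant into a renamed $\alpha$.

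\textbf{Main difficulty.} The only delicate point is upgrading the topological deck-transformation action to a metrically proper action by isometries on $(\tilde X,\tilde d)$, and legitimating the choice of generators and the path subdivision in (c). Both rely on the existence of minimizing geodesics in the lifted space, which in the manifold-with-boundary setting is not part of the standard toolkit: I would systematically invoke the extended Hopf--Rinow statement of Theorem \ref{th_HR} to transfer metric completeness from $(X,d)$ to $(\tilde X,\tilde d)$ and conclude that the latter is a proper geodesic length space.
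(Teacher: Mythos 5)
The paper offers no proof of Theorem~\ref{th-SM}: it is introduced with the words ``we summarize some basic facts from the \v Svarc--Milnor theory'' and stated without argument, as folklore adapted to length spaces. Consequently there is no paper proof to compare against, and your task is to check that the standard argument does go through in this setting. Your proposal is exactly the classical \v Svarc--Milnor proof, and it is essentially correct. The one genuine point of care, which you correctly identify, is upgrading the covering-space action to a metrically proper isometric action on a \emph{proper, geodesic} space $(\tilde X,\tilde d)$; you handle this by lifting metric completeness and then invoking the Hopf--Rinow Theorem~\ref{th_HR} for locally compact length spaces, which is precisely the tool the paper develops in Part~1 and sketches in the paragraph preceding Theorem~\ref{th-SM}. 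A few cosmetic loose ends remain: you should note explicitly that freeness and properness of the action on the proper space $\tilde X$ force $\delta=\frac12\inf_{g\neq 1}\tilde d(\tilde x_0,g\tilde x_0)>0$ (the infimum is positive because the orbit is discrete and closed, not because it is attained); the constants $\alpha=v_0^{-1}$, $\beta=D+\delta$ obtained in (b) need not exceed $1$, so one should replace them by $\max(v_0^{-1},2)$ and $\max(D+\delta,2)$, which costs nothing; and in (c), passing from the metric bound $\tilde d(\tilde x_0,g\tilde x_0)\le\beta^{-1}R+C$ to the word-length bound $|g|_S\le R$ requires choosing $\beta$ larger than the multiplicative quasi-isometry constant and restricting to $R\gg1$, matching the ``for every $R\gg1$'' in the statement. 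None of these is a gap, merely bookkeeping; the structure and the key ideas are all present and correct.
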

In the setting of Riemannian manifolds without(!) boundary, this result has been sharpened by M. Anderson,  \cite{An-Topology}, via a clever use of Dirichlet-domains of the action. He showed that given  a regular covering $P : ( \tilde M ,\tilde g) \to (M,g)$ with finitely generated deck transformation group $G$, if
\[
\vol B^{\tilde M}_{R}(\tilde x_{0}) \leq A R^{k} \text { and }\vol B^{M}_{R}(P(\tilde x_{0})) \geq B R^{h}
\]
then $G$ grows at most polynomially of order $k-h$. Extending this result to more general spaces (including complete manifolds with boundary) seems nontrivial due to the lack (in general) of nice properties of Dirichlet domains.\\

A direct application of these results yields the next nonexistence criteria.
\begin{theoremA}\label{th-nonexistence-ricc}
Let $(M,g)$ be a complete, $m$-dimensional Riemannian manifold with boundary $\partial M \not= \emptyset$.  Its Riemannian universal covering is denoted by $(\tilde M ,\tilde g)$. Let $G$ be a finitely generated subgroup of  $\pi_{1}(M)$ such that $G$ grows at least polynomially of order $k$. Then the following hold.\smallskip

\begin{itemize}
 \item [(a)] Assume $k \geq m+1$. Then, $(\tilde M ,\tilde g)$ cannot  be extended to a geodesically complete  manifold $( \tilde M',\tilde g')$ satisfying $\ric_{\tilde M'} \geq 0$.
\item [(b)] Assume $k \geq m+1$. If $\pM$ is simply connected, then $(M,g)$ itself has no complete Riemannian  extensions $(M',g')$ satisfying $\ric_{M'} \geq 0$.
\item [(c)] Assume $k=m$. If $\pM$ is simply connected then any Riemannian extension $(M',g')$ satisfying $\ric_{M'} \geq 0$ must be compact. In particular, if $\ric_{M}>0$ at some point then $(M,g)$ has no complete extension $(M',g')$ with $\ric_{M'} \geq 0$. 
\end{itemize}
\end{theoremA}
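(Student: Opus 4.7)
For part (a), I would argue by contradiction: suppose $(\tilde M',\tilde g')$ is a geodesically complete Riemannian extension of $(\tilde M,\tilde g)$ with $\ric_{\tilde g'}\geq 0$. Bishop--Gromov applied on $(\tilde M',\tilde g')$ gives $\vol_{\tilde g'}B^{\tilde M'}_R(\tilde x_0)\leq\omega_m R^m$. Since the isometric inclusion $(\tilde M,\tilde g)\hookrightarrow(\tilde M',\tilde g')$ yields $d_{\tilde g'}\leq d_{\tilde g}$ on $\tilde M$, we have $B^{\tilde M}_R(\tilde x_0)\subseteq B^{\tilde M'}_R(\tilde x_0)$ and the same polynomial estimate descends to $\vol_{\tilde g}B^{\tilde M}_R(\tilde x_0)$. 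The group $G$ acts freely, properly and by measure-preserving isometries on $(\tilde M,\tilde g)$, so Theorem~\ref{th-SM}(b) provides
\[
|\mathcal B^G_R(1)|\leq \alpha\,\vol_{\tilde g}B^{\tilde M}_{\beta R}(\tilde x_0)\leq CR^m
\]
for $R\gg 1$, contradicting the assumption $|\mathcal B^G_R(1)|\geq cR^{k}$ with $k\geq m+1$.

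For part (b), given a complete extension $(M',g')$ of $(M,g)$ with $\ric\geq 0$, I would lift to the universal cover $p'\colon\tilde M'\to M'$ and invoke (a). The role of the simply-connected boundary is to guarantee, via van Kampen's theorem (iterated over the components of $\partial M$ if there are several), that the inclusion-induced morphism $\iota\colon\pi_1(M)\to\pi_1(M')$ is injective. Consequently, the connected component $V$ of $(p')^{-1}(M)$ through a chosen basepoint $\tilde x_0$ is the cover of $M$ classified by $\ker\iota=\{1\}$, so $(\tilde M,\tilde g)$ is isometrically embedded in $(\tilde M',(p')^*g')$. The extension $(\tilde M',(p')^*g')$ is geodesically complete and has $\ric\geq 0$, so (a) produces a contradiction.

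For part (c), the same lifting construction provides a complete Riemannian extension $(\tilde M',\tilde g')\supset(\tilde M,\tilde g)$ with $\ric\geq 0$, on which $\iota(G)\cong G$ acts freely, properly and isometrically through deck transformations. Suppose by contradiction that $M'$ is noncompact, and form the regular intermediate covering $\tilde M'\to X:=\tilde M'/\iota(G)$. As a local-isometric cover of the complete noncompact $M'$, the quotient $X$ is complete, noncompact, boundaryless and inherits $\ric_X\geq 0$; the Calabi--Yau linear volume growth theorem then gives $\vol B^X_R\geq cR$ for $R\gg 1$, while Bishop--Gromov on $\tilde M'$ still provides $\vol B^{\tilde M'}_R\leq\omega_mR^m$. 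Anderson's theorem recalled in the excerpt, applied to the regular boundaryless cover $\tilde M'\to X$, then forces $G$ to have polynomial growth of order at most $m-1$, contradicting $k=m$, hence $M'$ must be compact. For the ``In particular'' clause, once $M'$ is compact the Cheeger--Gromoll structure theorem splits $\tilde M'$ isometrically as $N\times\R^j$ with $N$ compact; strict positivity of $\ric_{M'}$ at (a lift of) $x_0$ forbids a Euclidean factor, so $j=0$, $\tilde M'$ is compact, $\pi_1(M')$ is finite, and the infinite subgroup $\iota(G)\cong G$ yields a contradiction.

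The delicate step is the invocation of Anderson's theorem in (c): although the cover $\tilde M'\to X$ is between boundaryless manifolds---so the Dirichlet-domain obstruction explicitly mentioned in the excerpt does not arise---one should double-check that the quantitative conclusion applies when the base $X$ is only noncompact and $\iota(G)$ is possibly non-normal in $\pi_1(M')$. A secondary, more combinatorial, point is the injectivity of $\iota$ in (b)--(c) when $\partial M$ has several simply-connected components, which requires an iterated graph-of-groups / HNN decomposition of $\pi_1(M')$.
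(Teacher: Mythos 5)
Your proof is correct and, at the level of ingredients, coincides with the paper's: (a) is identical (Bishop--Gromov upper bound on $\tilde M'$ plus Švarc--Milnor lower bound from $G$), while (b) and (c) rely, as in the paper, on van Kampen for the injectivity of $\pi_1(M)\hookrightarrow\pi_1(M')$, then Švarc--Milnor/Anderson and Calabi--Yau, and finally Cheeger--Gromoll. You do organize (b) and the ``in particular'' clause of (c) a bit differently. In (b) the paper does not pass through the embedding $\tilde M\hookrightarrow\tilde M'$ at all: once $\pi_1(M')\simeq\pi_1(M)*H$ is known, it lets $G$ act on $\tilde M'$ and applies the Švarc--Milnor lower volume estimate to $\tilde M'$ directly, contradicting Bishop--Gromov there; your reduction to (a) via the cover of $M$ classified by $\ker\iota=\{1\}$ is a clean equivalent. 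In the last step of (c), the paper argues that $M'$ compact with infinite $\pi_1$ forces $\tilde M'$ to be noncompact, hence to contain a line, hence to split off an $\R$-factor by Cheeger--Gromoll, contradicting $\ric>0$ at a point; your invocation of the Cheeger--Gromoll structure theorem ($\tilde M'\simeq N\times\R^j$, $N$ compact) is the same splitting theorem packaged slightly differently. On your two flagged concerns: the Anderson worry is vacuous, because the cover $\tilde M'\to X=\tilde M'/\iota(G)$ is automatically regular with deck group $\iota(G)$ (the total space is simply connected), regardless of whether $\iota(G)$ is normal in $\pi_1(M')$, and Anderson's statement as recalled in the paper makes no compactness assumption on the base. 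The van Kampen point for disconnected $\partial M$ is legitimate and is also glossed over in the paper; since all edge groups are trivial, $\pi_1(M')$ is the fundamental group of a graph of groups with trivial edge groups, so the vertex group $\pi_1(M)$ injects by Bass--Serre theory, and both arguments go through.
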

\begin{proof}
 (a) Since $G$ acts freely and properly by isometries on $(\tilde M ,\tilde g)$ we can consider the (quotient) universal covering projection $Q : \tilde M \to N:=M/G$ where $\pi_{1}(N) \simeq G$ and the smooth manifold $N$ with boundary $\partial N = Q(\partial \tilde M)$ is endowed with the complete metric $h = Q_{\ast}\tilde g$. Now, by contradiction, suppose that the extension $(\tilde M',\tilde g')$ exists. Observe that, having fixed a point $\tilde x_{0} \in \tilde M$, we have the inclusion of intrinsic metric balls $B^{\tilde M}_{R}(\tilde x_{0}) \subseteq B^{\tilde M'}_{R}(\tilde x_{0})\cap \tilde M$. It follows from the Bishop-Gromov volume comparison that $ \vol B^{\tilde M}_{R}(\tilde x_{0}) \leq A R^{m}$. On the other hand, according to \eqref{SM-volume} and recalling that $G$ grows at least polynomially of order $m+1$, we have $\vol B^{\tilde M}(x_{0})\geq B R^{m+1}$ for every $R \gg 1$. Contradiction.\smallskip
 
 (b) By contradiction, suppose that $(M',g')$ is a complete Riemannian extension of $(M,g)$ with $\ric_{M'}\geq 0$. Using collars we can decompose $M'$ as the union $A \cup B$ where $A,B$ are open sets with the homotopy type of $M$ and $M' \setminus M$ respectively and, moreover, $A\cap B$ has the homotopy type of $\partial M$. Since $\partial M$ is simply connected, the Seifert-Van Kampen theorem yields that $\pi_{1}(M') \simeq \pi_{1}(M) \ast H$ for some group $H$. In particular, $\pi_{1}(M')$ contains an isomorphic image of the finitely generated group $G$. This latter acts by isometries on the universal covering space $(\tilde M',\tilde g')$ of $(M',g')$ and gives rise to the regular Riemannian covering projection $Q : (\tilde M',\tilde g') \to (N',h')$ where $N' = \tilde M' /G$ and $h' = Q_{\ast}\tilde g'$. Since $\pi_{1}(N') = G$ is finitely generated and grows at least polynomially of order $m+1$, from \eqref{SM-volume} we know that $\vol B^{\tilde M'}_{R} \geq A R^{m+1}$. But this contradicts Bishop-Gromov because $\ric_{M'} \geq 0$.\smallskip
 
 (c) Let $(M',g')$ be a complete  Riemannian extension of $(M,g)$ such that $\ric_{M'} \geq 0$ on $M'$. As in the previous case, $\pi_{1}(M')$ contains $G$ as a finitely generated subgroup and $G$ has polynomial growth of order $m=\dim M'$.  Consider the Riemannian universal covering projection $Q: (\tilde M',\tilde g') \to (N',h')$ where $N' = \tilde M'/G$. Then, both $(\tilde M',\tilde g')$ and $(N',h')$ are complete Riemannian manifolds with nonnegative Ricci curvature. Moreover $\pi_{1}(N') \simeq G$. We claim that $M'$ is compact. Indeed, suppose the contrary. Since $G$ is a subgroup of $\pi_{1}(M')$, there exists a (possibly not regular) covering projection $N' \to M'$. Therefore $N'$ must be noncompact as well. Fix $\tilde x_{0} \in \tilde M'$ and the corresponding $x_{0} = Q(\tilde x_{0}) \in N'$. By the Calabi-Yau lower volume estimate we have
 \[
 \vol B^{N'}_{R}(x_{0}) \geq C_{1}R
 \]
 for every $R \gg1$ and for some constant $C_{1}=C_{1}(x_{0})>0$. On the other hand, by Bishop-Gromov,
  \[
 \vol B_{R}^{\tilde M'} (\tilde x_{0}) \leq C_{2} R^{m} = C_{2}R^{(m-1)+1}
 \]
 for every $R>0$ and for some dimensional constant $C_{2} = C_{2}(m)>0$. It follows from the Anderson improvement of the \v Svarc-Milnor growth estimate that $G$ grows at most polynomially of order $m-1$. Contradiction. Therefore $M'$ is compact and, hence, $\tilde M'$ contains a line. It follows from the Cheeger-Gromoll splitting theorem that $\tilde M'$ splits isometrically as $\tilde M' \times \rr$, contradicting the fact that $\ric_{ M'}>0$ at some point.
\end{proof}

\begin{remark}
 {\rm
 As a matter of fact, in the assumption of (c), any Riemannian extension $(M',g')$ of $(M,g)$ satisfying $\ric_{M'} \geq 0$ must be compact and Ricci flat. This follows directly from \cite[Theorem 4]{ChGr1}. In particular, if $\sect_{M} \not\equiv 0$, there exists no Riemannian extension satisfying $\ric_{M'} \geq 0$. We have decided to state point (c) solely in terms of Ricci and to provide a \v Svarc-Milnor oriented proof for two reasons: (i) this section is mainly focused on Ricci curvature constraints related to \v Svarc-Milnor theory; (ii) the arguments have the merit to work even in situations where the the volume bounds follow from assumptions not directly related to $\ric \geq 0$.
 }
\end{remark}

\begin{example}
 {\rm
Let $(\Sigma = \mathbb{T}^{2} \setminus D_{\epsilon}, g_{\Sigma})$ be a flat $2$-torus with a small  disc removed and let $(N = \rr^{m}/ \Gamma , g_{N})$ be any closed flat manifold. Then, the Riemannian product $(M = \Sigma \times N, g_{M} = g_{\Sigma}+g_{N})$ is a compact, $(m+2)$-dimensional, flat manifold with boundary $\partial M = \mathbb{S}^{1} \times N$ and fundamental group $\pi_{1}(M) \simeq (\zz \ast \zz) \times \Gamma$. Since $\pi_{1}(M)$ grows exponentially, by Theorem \ref{th-nonexistence-ricc} (a), the Riemannian universal covering $P:(\tilde M ,\tilde g) \to (M,g)$ is a complete, flat, Riemannian manifold with boundary $\partial \tilde M = P^{-1}(\partial M) \not=\emptyset$ and without any complete Riemannian extension $(\tilde M',\tilde g')$ satisfying $\ric_{\tilde g'} \geq 0$.
 }
\end{example}
\begin{remark}
 {\rm
 Observe that, by Theorem \ref{th-SM} (b), the volume growth of the intrinsic balls of the metrically complete, simply connected Riemannian manifold $(\tilde M',\tilde g')$ with boundary is exponential regardless of the fact that it is a flat manifold. This shows that there is no reasonable volume growth comparison for manifolds with uncontrolled boundary geometry. Similar considerations hold for  the next two examples: the Riemannian universal coverings of the compact manifolds we are going to construct are metrically complete manifolds with boundary, with sectional curvature $\geq C>0$ and with polynomial volume growth. In particular, these manifolds are non-compact.
 }
\end{remark}

\begin{example}
 {\rm
 Let $H^{3}_{\rr}$ be the $3$-dimensional Heisenberg group realised as the space of lower triangular matrixes
 \[
\left[ \begin{array}{ccc}
1 & 0 & 0 \\
a & 1 & 0 \\
b & c & 1
\end{array} \right]
\] 
with $a,b,c \in \rr$ and let $H^{3}_{\zz}$ be its natural integral lattice.  Then, $Z = H^{3}_{\rr} / H^{3}_{\zz}$ is a compact $3$-dimensional smooth (Nil)manifold. Its fundamental group $\pi_{1}(Z)$ is isomorphic to $H^{3}_{\zz}$ and, therefore, growths polynomially of order $4$; \cite{Har}. By Seifert-Van Kampen, the same growth property holds for the compact manifold with boundary $Z \setminus D$, where $D \subset Z$ denotes a smooth disk. Now, we construct a $6$-dimensional compact manifold by taking the product
\[
M =  (Z \setminus D) \times Z.
\]
Note that
\[
\partial M \approx \S^{2} \times  Z,
\]
therefore $\partial M$ is not simply connected. On the other hand,
\[
\pi_{1}(M) \simeq H^{3}_{\zz} \times H^{3}_{\zz}
\]
has polynomial growth of order $8 > 6=\dim M$. According to Lemma \ref{gromov} we endow $M$ with a Riemannian metric $g$ of positive sectional curvature $\sect_{M} >0$. By Theorem \ref {th-nonexistence-ricc} (a) we conclude that the universal covering $(\tilde M ,\tilde g)$ of $(M,g)$ cannot be extended to a complete Riemannian manifold of nonnegative Ricci curvature.
}
\end{example}

\begin{example}
{\rm
Let $M = Z \setminus D$ where $Z$ is the compact (Nil)manifold constructed in the previous example and $D$ is a smooth disk. Endow $M$ with a metric $g$ satisfying $\ric_{M} >0$. Since $\partial M \approx \sph^{2}$ is simply connected and $\pi_{1}(M) \simeq H^{3}_{\zz}$ has polymonial growth of order $4$, by Theorem \ref{th-nonexistence-ricc} (b) we conclude that $(M,g)$ cannot be extended to a complete Riemannian manifold of non-negative Ricci curvature.
} 
\end{example}

\begin{example}\label{ex-nonnegativeric-3}
 {\rm
Let $m \geq 3$ and consider the $m$-dimensional compact manifold $M$ with smooth simply connected boundary $\pM \approx \S^{m-1}$ obtained by removing from the ``flat'' torus $\rr^{m}/ \zz^{m}$ a small smooth disk. By the Seifert-Van Kampen theorem, $\pi_{1}(M) \simeq \pi_{1}(\rr^{m} / \Gamma) \simeq \zz^{m}$, therefore $\pi_{1}(M)$ has polynomial growth of order $m$. According to Lemma \ref{gromov} we endow $M$ with a metric $g$ of positive sectional curvature $\sect_{M}>0$. Then, by (c) of Theorem \ref{th-nonexistence-ricc}, $(M,g)$ cannot be extended to a complete manifold $(M',g')$ satisfying $\ric_{M'}\geq 0$.
 }
\end{example}
\subsection{A measure of $\ric_{-}$ of complete extensions}
A compact Riemannian manifold $M$ with simply connected boundary $\partial M \not= \emptyset$ and fundamental group isomorphic to the fundamental group of a flat manifold could have no complete extensions with $\ric \geq 0$; see Example \ref{ex-nonnegativeric-3}. By using some harmonic mapping theory \'a la Schoen-Yau, \cite{EeSa, Ha, ScYa, PRS-book, PiVe-IJM}, we can obtain some more precise information on the negative part of the Ricci curvature of any complete Riemannian extension.\smallskip
\begin{notation}
Given a Schr\"odinger operator $\mathcal{L} = -\Delta_{M}  - a(x)$ on the Riemannian manifold $(M,g)$ we denote by $\lambda_{1}(\mathcal{L})$ the bottom of its spectrum, i.e.,
\[
\lambda_{1}(\mathcal L) = \inf_{\varphi \in C^{\infty}_{c}(M) \setminus \{0\}} \frac{\int_{M} |\nabla \varphi|^{2}+a(x) \varphi^{2}}{\int_{M} \varphi^{2}}.
\]
Moreover, given a real number $\delta \in \rr$ we introduce the modified Schr\"odinger operator
\[
\mathcal{L}_{\delta} = -\Delta_{M} - \delta a(x).
\]
Finally, we use the notation  $a_{-} = -\min(a,0) \in \rr_{\geq 0}$, for any $a \in \rr$.\smallskip

\noindent We say that the Riemannian manifold $(M,g)$ is $\delta$-stable, $\delta >0$, with respect to the Schr\"oedinger operator $\mathcal{L} = -\Delta_{M}-a(x)$ if $\lambda_{1}(\mathcal{L}_{\delta}) \geq 0$.
\end{notation}
With this terminology in mind, we state the following somewhat quantitative result.
\begin{theoremA}\label{prop-usingharmonicmaps}
Let $(M,g)$ be a compact, $m\geq 3$-dimensional Riemannian manifold with boundary $\partial M \not= \emptyset$. Assume also that the following topological properties are satisfied:
\begin{itemize}
\item [(a)] $\partial M $ is simply connected.
\item [(b)]  there exists a non-trivial homomorphism $\varrho: \pi_{1}(M) \to  \Gamma$, where $\Gamma$ is the fundamental group of a compact Riemannian manifold $(N,h)$ with $\sect_{h} \leq 0$.
\end{itemize}
Consider any geodesically complete, noncompact Riemannian extension $(M' , g')$ of $(M,g)$ and set
\[
a(x) = \inf \{ \ric_{M'}(v,v) : v \in T_{x}M' , g'(v,v) =1\}.
\]
Then $(M',g')$ is $\delta$-unstable with respect to $\mathcal{L} = -\Delta_{M'} - a_{-}(x)$, for every $\delta >(m-1)/m$.\\
The same conclusion holds if we replace the assumption that $M'$ is noncompact with the assumption that $\ric_{M'} >0$ at some point.
\end{theoremA}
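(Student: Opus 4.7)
The plan is to realize the homomorphism $\varrho$ by a non-trivial harmonic map from the extension and to test the Rayleigh quotient of $\mathcal{L}_{\delta}$ against a suitable power of its energy density, in the spirit of Schoen--Yau harmonic mapping arguments.

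First, I would handle the topology. Assumption (a) keeps $\pi_{1}(M)$ visible in $\pi_{1}(M')$: applying Seifert--Van Kampen to a decomposition of $M'$ into a thickening $U$ of $M$ and a thickening $V$ of $\overline{M' \setminus M}$, glued along a tubular neighborhood of $\partial M$ with trivial fundamental group, yields $\pi_{1}(M') \simeq \pi_{1}(M) * \pi_{1}(V)$. Composing the projection onto the first free factor with $\varrho$ produces a non-trivial $\varrho'\colon \pi_{1}(M') \to \Gamma$. Since $\sect_{h} \leq 0$, Cartan--Hadamard gives that $N$ is a $K(\Gamma,1)$, so $\varrho'$ is represented by a continuous map $f\colon M' \to N$, unique up to homotopy.

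Next, I would produce a non-constant harmonic map $u\colon M' \to N$ homotopic to $f$. If $M'$ is compact this is Eells--Sampson; if $M'$ is non-compact I would exhaust $M'$ by smooth compact domains $\{K_{n}\}$, solve on each $K_{n}$ the Dirichlet problem for a harmonic $u_{n}\colon K_{n} \to N$ with boundary values $f|_{\partial K_{n}}$ (Hamilton's theorem, valid since $N$ is compact with $\sect_{h} \leq 0$), and extract an Ascoli--Arzel\`a limit via interior $C^{1}$ estimates; an alternative is to build directly a $\varrho'$-equivariant harmonic map $\tilde u\colon \tilde M' \to \tilde N$ into the Hadamard manifold $\tilde N$ by Corlette--Labourie--Jost methods, whose $\Gamma$-invariant energy density descends to $|du|$ on $M'$. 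With $u$ in hand, the Eells--Sampson Bochner formula combined with $\sect_{h} \leq 0$ and the pointwise Ricci bound gives $\tfrac{1}{2}\Delta |du|^{2} \geq |\nabla du|^{2} - a_{-}(x)|du|^{2}$; setting $\phi := |du|$ and invoking the refined Kato inequality for harmonic maps into non-positively curved targets yields, on $\{\phi>0\}$, a pointwise differential inequality of the form $\phi\,\Delta\phi \geq (\alpha - 1)|\nabla\phi|^{2} - a_{-}\phi^{2}$ with $\alpha$ sharp enough that $2 - \alpha = (m-1)/m$, i.e.\ $\alpha = (m+1)/m$.

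Fixing $\delta > (m-1)/m$, I would test $\mathcal{L}_{\delta}$ against $\psi := \eta\,\phi^{\delta}$ for $\eta \in C^{\infty}_{c}(M')$. Multiplying the differential inequality by $\eta^{2}\phi^{2\delta-2}$, integrating by parts, and using the specific exponent $\tau = \delta$ to cancel the cross term $\int \phi^{2\delta-1}\eta\,\nabla\phi\cdot\nabla\eta$, one arrives at
\begin{equation*}
\int_{M'} |\nabla \psi|^{2} \;-\; \delta \int_{M'} a_{-}\,\psi^{2} \;\leq\; \int_{M'} \phi^{2\delta}|\nabla \eta|^{2} \;-\; \delta\!\left(\delta - \tfrac{m-1}{m}\right)\!\int_{M'} \eta^{2}\,\phi^{2\delta-2}|\nabla\phi|^{2},
\end{equation*}
whose last summand is strictly negative exactly when $\delta > (m-1)/m$. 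In the non-compact case I would take radial cutoffs $\eta_{R}$ ($\equiv 1$ on $B_{R}$, $\equiv 0$ outside $B_{2R}$, $|\nabla \eta_{R}|\leq c/R$): local boundedness of $\phi$ (energy density of a harmonic map into a compact target) makes the cutoff error small relative to the negative term for large $R$, and the latter persists because $u$ is non-constant, hence $\lambda_{1}(\mathcal{L}_{\delta}) < 0$. For the alternative hypothesis ($\ric_{M'} > 0$ somewhere) with $M'$ compact, I would take $\eta \equiv 1$: no cutoff error arises, and positivity of $\ric_{M'}$ at a point rules out $du$ being parallel (for otherwise the Bochner identity would force $\ric(du,du) \leq 0$ everywhere), giving strict inequality and $\lambda_{1}(\mathcal{L}_{\delta}) < 0$. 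The hardest step is the existence and non-degeneracy of the harmonic representative in the non-compact case -- ensuring the Ascoli--Arzel\`a limit does not collapse to a constant -- which the equivariant alternative is designed to address; identifying the sharp refined Kato constant $\alpha = (m+1)/m$ for harmonic maps and selecting the exponent $\tau = \delta$ so the computation lands exactly at the $(m-1)/m$ threshold are the other nontrivial technical points.
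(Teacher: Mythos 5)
Your high-level plan (realize $\varrho$ by a harmonic map into the nonpositively curved target, run a Bochner--Kato argument, read off instability from the Rayleigh quotient) is the right philosophy and is exactly what the paper's cited vanishing machinery is built on; the topological step via Seifert--Van Kampen is also fine and equivalent to the paper's simpler observation that a map $M\to N$ whose restriction to the simply connected $\partial M$ is nullhomotopic can be extended by a constant to $M'$. However, there are two genuine gaps in the noncompact case.

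First, the existence step. The paper does not run an exhaustion-by-Dirichlet-problems argument; it invokes Schoen--Yau \cite{ScYa} to get a harmonic map $u$ \emph{with finite total energy} $|du|\in L^{2}(M')$ in the homotopy class of $f$. Finite energy is not a convenience here -- it is load-bearing for the endgame. Your Ascoli--Arzel\`a / equivariant alternatives might produce a harmonic map, but without a finite-energy conclusion you have no control on $\int\phi^{2\delta}|\nabla\eta_R|^2$, and your claim that ``local boundedness of $\phi$ makes the cutoff error small relative to the negative term'' does not hold: if $\phi$ is bounded the cutoff error is $O\!\big(R^{-2}\vol(B_{2R}\setminus B_R)\big)$, which need not vanish, while the ``negative term'' $-\delta\big(\delta-\tfrac{m-1}{m}\big)\int\eta^2\phi^{2\delta-2}|\nabla\phi|^2$ need not grow.

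Second, and this is the more serious conceptual gap: your direct Rayleigh-quotient argument silently assumes $|\nabla\phi|\not\equiv 0$, i.e.\ that $|du|$ is non-constant. But ``$u$ non-constant'' does not imply ``$|du|$ non-constant,'' and the case $|du|\equiv c>0$ is precisely what survives the Bochner--Kato inequality under the stability hypothesis. This is not a corner case to be waved away -- it is the generic outcome of the vanishing theorem. The paper handles it explicitly: under the stability assumption the vanishing results of \cite{PRS-book, PiVe-IJM} (which require the finite-energy hypothesis) yield $|du|\equiv\mathrm{const}$ and, moreover, either $a\equiv 0$ (so $\ric_{M'}\geq 0$) or $u\equiv\mathrm{const}$. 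The second branch is excluded since $\varrho'\neq 1$. In the first branch, $\ric_{M'}\geq 0$ on the noncompact $M'$ forces $\vol(M')=+\infty$ by the Calabi--Yau estimate, and then a nonzero constant $|du|$ cannot lie in $L^2(M')$, contradicting the Schoen--Yau finite-energy conclusion. Without the finite-energy input and without the Calabi--Yau volume step, your argument cannot close this loop. (Your treatment of the compact case with $\ric_{M'}>0$ somewhere is essentially correct and matches the paper's use of the Weitzenb\"ock formula to rule out a nonzero parallel $du$.)
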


As alluded to above, the proof is an easy consequence of the harmonic mapping theory developed in \cite{EeSa, ScYa} and of the vanishing theorems  from \cite{PRS-book, PiVe-IJM}. We sketch the arguments for the sake of completeness.

\begin{proof}[Proof (of Theorem \ref{prop-usingharmonicmaps})]
Fix a geodesically complete Riemannian extension $(M',g')$ of $(M,g)$. Since, by the Cartan-Hadamard theorem,  $N$ is aspherical (i.e. its universal covering is contractible), the homomorphism $\varrho$ is induced (up to conjugation) by a continuous map $f: M \to N$ and, since $\partial M$ is simply connected, $f$ can be extended by a constant to all of $M'$. If follows from \cite{ScYa} that there exists a harmonic map $u: M' \to N$ with finite energy $|du| \in L^{2}(M')$ in the homotopy class of $f$. In particular, $u$ and $f$ induce the same homomorphism between fundamental groups.\smallskip

Now, by contradiction, suppose that $\ric_{M'} \geq a(x) g'$ with
\[
\lambda_{1} (-\Delta_{M'} - \delta a_{-}(x)) \geq 0
\]
for some $\delta>(m-1)/m$. \smallskip

Using the vanishing results in \cite{PRS-book, PiVe-IJM} we deduce that $|du| \equiv \mathrm{const}$ and either $a(x) \equiv 0$, i.e., $\ric_{M'} \geq 0$ or $u \equiv \mathrm{const}$. The second possibility cannot occur because, otherwise, the original homomorphism $\varrho$ would be trivial. Therefore $\ric_{M'} \geq 0$. If we assume that $M'$ is non-compact then, by the Calabi-Yau lower volume estimate, we have that $\vol (M') = +\infty$ and, therefore, the constant function $|du|$ cannot be in $L^{2}(M')$. Contradiction. On the other hand, if $M'$ is compact and $\ric_{M'}>0$ at some point, the Weitzenb\"ock formula implies that $|du|=0$ and $u \equiv \mathrm{const}$. Again, this is a contradiction.
\end{proof}

\begin{example}
 {\rm
Let $M$ be the compact, $m \geq 3$-dimensional manifold with simply connected boundary $\partial M \approx \mathbb{S}^{m-1}$ obtained by removing a smooth small disk $D$ from a compact flat manifold $\rr^{m} / \Gamma$, with $\Gamma$ a crystallographic group. Suppose that $M$ is endowed with a Riemannian metric $g$ such that $\ric_{M} \geq 0$. Since $\pi_{1}(M) \simeq \pi_{1}(\rr^{m} / \Gamma) \simeq \Gamma$ and $N = \rr^{m} / \Gamma$ has a flat metric, we can take the obvious isomorphism $\varrho = \mathrm{id}: \pi_{1}(M) \to \Gamma$ and conclude the validity of the following property: if $(M,g)$ is extended to a gedesically complete Riemannian manifold $(M',g')$  with $\ric_{M'} \geq - a(x)$, $a(x) \geq 0$, and $(M',g')$ is $\delta$-stable with respect to $\mathcal{L} = -\Delta - a(x)$, for some $\delta > m/(m-1)$, then $M'$ must be a compact, Ricci-flat manifold.
 }
\end{example}

\subsection{Nonexistence of $\ric \geq -C^{2}$ extensions}
Let $(M,g)$ be a compact Riemannian manifold with (possibly empty) boundary $\partial M$ and let $P : (\tilde M ,\tilde g) \to (M,g)$ be its Riemannian universal covering. The entropy of $(M,g)$ is the number
\[
h(M,g) = \liminf_{R \to +\infty} \frac {\log \vol B^{\tilde M}_{R}(\tilde x_{0})}{R},
\] 
well defined independently of the choice of the base point $\tilde x_{0}$. Similarly, one introduces a notion of entropy in the class of finitely generated groups in order to measure the degree of exponential growth. Let $G$ be a finitely generated group with finite set of generators $\mathcal{S}$. Then, the entropy of $(G,\mathcal{S})$ is the number
\[
h(G,\mathcal{S}) = \liminf_{R\to +\infty} \frac{\log |\mathcal{B}^{G}_{R}(1)|}{R} 
\]
where, we recall, $\mathcal{B}^{G}_{R}(1)$ is the metric ball of $G$ with respect to the word metric induced by the set of generators $\mathcal{S}$. Although a change of the finite set of generators produces quasi-isometric distances, the entropy is not a quasi-isometry invariant and, therefore, it makes sense to define the minimal entropy of the group $G$ as
\[
h(G) = \inf_{\mathcal S} h(G,\mathcal S).
\]
It is a contribution of Gromov to the \v Svarc-Milnor theory that the entropy of a compact manifold is related to the minimal entropy of its fundamental group via the diameter of the space. Although the result is originally stated for a compact manifold without boundary, the proof still works even in the presence of a boundary; see \cite[Theorem 5.16]{Gro-metricstruct}.
\begin{theorem}\label{gromov-entropy}
 Let $(M,g)$ be a compact manifold with (possibly empty) boundary $\partial M$. Then
 \[
 h(\pi_{1}(M)) \leq 2\diam(M,g) h(M,g).
 \]
\end{theorem}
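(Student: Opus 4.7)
The plan is to reduce the estimate of $h(\pi_1(M))$ to that of $h(G,\mathcal{S}_R)$ for a well-chosen finite generating set $\mathcal{S}_R$ of $G=\pi_1(M)$, and then to bound the latter by a volume-packing argument in $\tilde M$. Fix base points $\tilde x_0\in\tilde M$ and $x_0=P(\tilde x_0)\in M$, and set $D=\diam(M,g)$. For each $R>0$ I would work with
\[
\mathcal{S}_R:=\{\gamma\in G : d_{\tilde g}(\tilde x_0,\gamma\tilde x_0)\leq R\},
\]
which is finite by properness of the $G$-action and by the Heine-Borel property of $(\tilde M,\tilde g)$ (itself a complete, locally compact length space, so Theorem \ref{th_HR} applies).

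The first nontrivial step is to show that $\mathcal{S}_R$ actually generates $G$ whenever $R>2D$. I would first observe that the orbit $G\cdot\tilde x_0$ is $D$-dense in $\tilde M$: for any $\tilde y\in\tilde M$, since $d_{(M,g)}(P(\tilde y),x_0)\leq D$, an almost-minimizing path in $M$ from $P(\tilde y)$ to $x_0$ lifts at $\tilde y$ to an endpoint of the form $\gamma\tilde x_0$, with $d_{\tilde g}(\tilde y,\gamma\tilde x_0)\leq D$. Then, given $\gamma\in G$, I would join $\tilde x_0$ to $\gamma\tilde x_0$ by a minimizing geodesic $c:[0,L]\to\tilde M$ (which exists by Theorem \ref{th_HR}) of length $L=d_{\tilde g}(\tilde x_0,\gamma\tilde x_0)$, subdivide $[0,L]$ into $n$ equal pieces with $L/n<R-2D$, and, for each subdivision point $p_i=c(iL/n)$, pick $\gamma_i\in G$ with $d_{\tilde g}(p_i,\gamma_i\tilde x_0)\leq D$, choosing $\gamma_0=e$ and $\gamma_n=\gamma$. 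The triangle inequality then yields
\[
d_{\tilde g}(\tilde x_0,\gamma_{i-1}^{-1}\gamma_i\tilde x_0)=d_{\tilde g}(\gamma_{i-1}\tilde x_0,\gamma_i\tilde x_0)\leq 2D+L/n<R,
\]
so $\gamma=(\gamma_0^{-1}\gamma_1)(\gamma_1^{-1}\gamma_2)\cdots(\gamma_{n-1}^{-1}\gamma_n)\in\langle\mathcal{S}_R\rangle$.

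The second step is the entropy estimate. Iterating the triangle inequality, any $\gamma$ of word length at most $n$ in $\mathcal{S}_R$ satisfies $d_{\tilde g}(\tilde x_0,\gamma\tilde x_0)\leq nR$. Since the $G$-action is proper, $2\eta:=\inf_{\gamma\in G\setminus\{e\}} d_{\tilde g}(\tilde x_0,\gamma\tilde x_0)$ is strictly positive; thus for any $\epsilon\in(0,\eta)$ the balls $B^{\tilde M}_\epsilon(\gamma\tilde x_0)=\gamma\cdot B^{\tilde M}_\epsilon(\tilde x_0)$, $\gamma\in G$, are pairwise disjoint, are isometric to $B^{\tilde M}_\epsilon(\tilde x_0)$, and, whenever $\gamma\in\mathcal{B}^G_n(1)$, are contained in $B^{\tilde M}_{nR+\epsilon}(\tilde x_0)$. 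Setting $V_\epsilon:=\vol B^{\tilde M}_\epsilon(\tilde x_0)>0$, a volume packing argument produces
\[
|\mathcal{B}^G_n(1)|\leq V_\epsilon^{-1}\vol B^{\tilde M}_{nR+\epsilon}(\tilde x_0).
\]
Taking logarithms, dividing by $n$ and passing to $\liminf_{n\to\infty}$ gives $h(G,\mathcal{S}_R)\leq R\cdot h(M,g)$, hence $h(\pi_1(M))\leq h(G,\mathcal{S}_R)\leq R\cdot h(M,g)$. Since $R>2D$ was arbitrary, taking the infimum over such $R$ yields the claimed bound.

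The main subtle point will be to confirm that the features underpinning the classical boundaryless proof transfer verbatim to the case $\pM\neq\emptyset$: existence of minimizing geodesics between arbitrary points of $\tilde M$, $D$-density of $G\cdot\tilde x_0$, and the isometric nature of the translated small balls. All three follow from the material already assembled in this part of the paper, namely the Hopf-Rinow Theorem \ref{th_HR} for locally compact length spaces (which, by the discussion preceding it, covers manifolds with smooth boundary) together with the fact that the deck group acts on $(\tilde M,\tilde g)$ by Riemannian isometries, so that no additional hypothesis on the submanifold geometry of $\pM$ is required.
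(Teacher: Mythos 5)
The paper does not actually give a proof of this statement: it simply cites \cite[Theorem 5.16]{Gro-metricstruct} and asserts that Gromov's argument survives the presence of a boundary. Your write-up is a correct and complete execution of exactly that argument (the \v Svarc--Milnor / volume-packing proof), and you correctly trace the boundary-case ingredients --- existence of minimizing geodesics in $\tilde M$, the $D$-density of the orbit $G\cdot\tilde x_0$, and the isometry of the translated small balls --- to Theorem \ref{th_HR} and the isometric deck action. One small step is left implicit and is worth recording: to pass from the discrete $\liminf$ along $r_n = nR+\epsilon$ to the continuous $\liminf$ defining $h(M,g)$ you should invoke the monotonicity of $r\mapsto\vol B^{\tilde M}_r(\tilde x_0)$, which guarantees that the two $\liminf$'s agree.
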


Combining Theorem \ref{gromov-entropy} with Bishop-Gromov volume  comparison we get the following 

\begin{propositionA}\label{SMG}
 Let $(M,g)$ be a compact, $m$-dimensional Riemannian manifold with boundary $\pM \not= \emptyset$. Then, the universal covering $(\tilde M ,\tilde g)$ has no complete Riemannian extensions $(\tilde M',\tilde g')$ satisfying $\ric \geq -(m-1) C^{2}$ for any constant 
\begin{equation}\label{entropy}
0 < C < \frac{h(\pi_{1}(M))}{2 (m-1)\diam(M,g)}.
\end{equation}
 \end{propositionA}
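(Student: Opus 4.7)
My plan is to argue by contradiction, combining an exponential lower bound on the volume growth of balls in $(\tilde M,\tilde g)$ coming from Gromov's Theorem \ref{gromov-entropy} with the exponential upper bound on the volume growth of balls in a hypothetical complete Riemannian extension $(\tilde M',\tilde g')$ provided by Bishop--Gromov volume comparison under the assumption $\ric \geq -(m-1)C^{2}$. Since $(\tilde M',\tilde g')$ is understood in the sense of Problem \ref{problem-curvature}, it is a boundaryless complete Riemannian manifold, so Bishop--Gromov applies globally.

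The implementation goes as follows. First, fix a base point $\tilde x_{0}\in \tilde M$. Because the isometric inclusion $(\tilde M,\tilde g)\hookrightarrow(\tilde M',\tilde g')$ can only add shortcuts, any path in $\tilde M$ is a path in $\tilde M'$ of the same length, hence $d_{(\tilde M',\tilde g')}\leq d_{(\tilde M,\tilde g)}$ on $\tilde M$. This gives the ball inclusion $B^{\tilde M}_{R}(\tilde x_{0})\subseteq B^{\tilde M'}_{R}(\tilde x_{0})$ and therefore
$$
\vol_{\tilde g'} B^{\tilde M'}_{R}(\tilde x_{0}) \;\geq\; \vol_{\tilde g} B^{\tilde M}_{R}(\tilde x_{0}).
$$
Next, by the very definition of the entropy $h(M,g)$ and by Theorem \ref{gromov-entropy}, for every $\epsilon>0$ and every $R$ large enough,
$$
\vol_{\tilde g} B^{\tilde M}_{R}(\tilde x_{0}) \;\geq\; e^{(h(M,g)-\epsilon)R} \qquad\text{and}\qquad h(M,g)\;\geq\;\frac{h(\pi_{1}(M))}{2\,\diam(M,g)}.
$$
On the other hand, since $\ric_{\tilde g'}\geq -(m-1)C^{2}$ on the complete boundaryless manifold $\tilde M'$, Bishop--Gromov yields
$$
\vol_{\tilde g'} B^{\tilde M'}_{R}(\tilde x_{0}) \;\leq\; \omega_{m-1}\int_{0}^{R}\left(\frac{\sinh(Cs)}{C}\right)^{m-1}ds \;\leq\; A\, e^{(m-1)C R}
$$
for some constant $A=A(m,C)>0$ and all $R$ large.

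Chaining the three inequalities and letting $R\to \infty$ then $\epsilon\to 0^{+}$ gives $h(M,g)\leq (m-1)C$, whence
$$
(m-1)C\;\geq\;\frac{h(\pi_{1}(M))}{2\,\diam(M,g)},
$$
which contradicts the hypothesis \eqref{entropy} on $C$. I do not expect a serious obstacle here: the only delicate point is the ball-inclusion step, which relies on the elementary but crucial observation that the intrinsic distance on $\tilde M$ dominates the one induced on $\tilde M$ by the extended metric; everything else is a direct packaging of Theorem \ref{gromov-entropy} together with the classical Bishop--Gromov estimate applied to the extension.
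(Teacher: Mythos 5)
Your proposal follows essentially the same argument as the paper: by contradiction, use the ball inclusion $B^{\tilde M}_{R}(\tilde x_{0})\subseteq B^{\tilde M'}_{R}(\tilde x_{0})$ and Bishop--Gromov on the boundaryless extension to bound $h(M,g)\leq (m-1)C$, then feed this into Gromov's entropy inequality (Theorem~\ref{gromov-entropy}) to contradict \eqref{entropy}. You simply spell out the intermediate volume inequalities in more detail than the paper does, but the structure and the key ingredients are identical.
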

 \begin{proof}
 By contradiction, assume that such an extension $(\tilde M',\tilde g')$ exists. Then, by Bishop-Gromov, having fixed $\tilde x_{0} \in \tilde M \subset \tilde M'$, and using the fact that $B^{\tilde M}_{R}(\tilde x_{0}) \subseteq B^{\tilde M'}_{R}(\tilde x_{0})$, we have
 \[
h(M,g) \leq (m-1) C.
 \]
Using this information into Theorem \ref{gromov-entropy} we conclude
 \[
 h(\pi_{1}(M)) \leq 2 (m-1) \diam(M,g) C
 \]
 and this contradicts \eqref{entropy}.
\end{proof}

Proposition \ref{SMG} suggests that, in general, one can not hope to extend a given manifold with boundary preserving a lower Ricci curvature bound.
This is the content of the following (class of) examples, which are flexible enough to prove that also lower Sectional curvature bounds can not be preserved.

Of course, the situation is different if some further condition on the boundary is prescribed (see for instance Section \ref{section_perelman}).

\begin{example}\label{th_BG}
{\rm
For any real constants $K,\lambda$ and for any dimension $m\geq 2$ there exists an $m$-dimensional Riemannian manifold with boundary $(M,g)$ with constant sectional curvature $\sect_g=K$ such that no extension $(M',g')$ of $(M,g)$ satisfies $\ric_g'\geq \lambda$. 
}
\end{example}

\begin{proof}
Clearly, it is enough to give examples when $\lambda<0$. Accordingly, we set $\lambda=-(m-1)C^2$ for some $C>0$.\smallskip

Let $N=N(C,K)$ be a positive integer to be specified later. Define $N$ points $\{a_i\}_{i=1}^N\subset \R^2$ by $a_i:=(\frac i{4N},0)$ and define the set
$$S_N= \cup_{i=1}^N \partial B_{\frac{i}{4N}}^{\R^2}\left(a_i\right) \subset\R^2.$$ 
Moreover, let $T'_N$ be the $(1/16N)$-neighborhood of $S_N$ in $\R^m$, more precisely $$T'_N:=\left\{x\in\R^m\ : d_{\R^m}(x,(S_N\times \{0_{\R^{m-2}}\}))<\frac 1{16N}\right\}\subset B_{1}^{\R^m}.$$ 
By approximation, $T'_N$ can be homotopically deformed to a submanifold $T_N\subset B_{1}^{\R^m}$ with smooth boundary. Endow $B_1^{\R^m}$ with a metric $g_K$ of constant curvature $K$ and let $(M_N=\tilde T_N,g)$ be the (noncompact) universal Riemannian covering of $(T_N,g_K|_{T_N})$. It is clear by construction that we can suppose that 
\begin{equation}\label{diam_bound}
\operatorname {diam}_{g_K} (T_N)<\delta
\end{equation}
for some constants $\delta>0$ depending on $K$, but independent of $N$.

By construction there is a deformation retraction of $T_N$ onto $S_N$. In particular the
 (equivalence classes of the) loops $\gamma_i:[0,1]\to T_N$ defined by $\gamma_i(t)=\frac{i}{4N}(1+\cos(2\pi t),\sin(2\pi t))\times\{0_{\R^{m-2}}\}$ are a family of generators for $\Gamma=\pi_1(T_N,0)$. Moreover $\pi_1(T_N,0)$ is exactly the free group generated by the $\mathcal S=\{[\gamma_i]\}_{i=1}^N$.

As observed by Gromov, \cite[Example 5.13]{Gro-metricstruct}, the minimal entropy of the free group with N generator is given by $h(\Gamma)=h(\pi_1(T_N))=\log(2N-1)$. Hence, according to Proposition \ref{SMG}, recalling also \eqref{diam_bound}, we get that $(M_N,g)$ admits no Riemannian extensions satisfying  $\ric \geq -(m-1) C^{2}$ provided
\[
N>\frac 12+ \frac12 e^{2(m-1)\delta C}.
\]
\end{proof}

\begin{remark}{\rm
All the fundamental groups of the manifolds $T_N$ involved in the previous example have exponential growths. As a matter of fact, for positive lower curvature bounds, similar techniques permit to construct examples given by coverings of manifolds whose fundamental group has polynomial growth. A simple example in this sense is given by the universal covering of the sphere strip $\{(x,y,z)\in\R^3\ :\ x^2+y^2+z^2=1\text{ and }z\leq 1/2\}$
which has sectional curvature $1$, but is non-compact and hence it does not admit any extension of positively lower bounded curvature (once again by Bishop-Gromov). By the way, taking suitable coverings of large enough finite index, also compact examples can be constructed.}
\end{remark}

\section{Nonexistence of complete extensions with $\sect \leq 0$}

Recall that a  topological space $X$ is said to be $k$-connected, $k\in \nn \cup \{\infty\}$, if its homotopy groups satisfy $\pi_{j}(X) = 0$ for every $j= 0,\cdots ,k$. The connected space $X$ is called aspherical if the vanishing condition $\pi_{j}(X) =0$ holds for every $j \geq 2$.  A classical theorem of J.C. Whitehead implies that, in the setting of $CW$-complexes, $\infty$-connected spaces are contractible. Since the universal covering projection $\tilde X \to X$ induces isomorphisms $\pi_{j}(\tilde X) \simeq \pi_{j}(X)$ for every $j \geq 2$, then aspherical $CW$-complexes are characterised by the property that their universal covering spaces are contractible. This equivalence, in particular, holds at the smooth manifold level. It then follows from the Cartan-Hadamard theorem that every complete Riemannian manifold of non-positive curvature must be aspherical. Since every compact manifold with boundary can be endowed with a (complete) metric of negative curvature, we are naturally led to detect a (sufficiently large) class of smooth compact manifolds with boundary that cannot be realized as a domain inside an aspherical manifold.

We propose two criteria of non-extendibility: one is homological (hence, in principle, easier to apply) and requires that the manifold is simply connected. The other one is homotopical and requires that the sufficiently connected boundary can be capped by a contractible space. According to this program, let us begin by pointing out the following simple obstruction result.
 
\begin{propositionA}\label{prop-nonexistence-negative-sect}
Let $(M,g)$ be a complete, $m(\geq 4)$-dimensional Riemannian manifold with boundary $\partial M \not= \emptyset$.  Assume that, for some $2 \leq k \leq m-2$, the following topological conditions on $M$ and $\partial M$ are satisfied:
\begin{itemize}
 \item [(a)]  $M$ is simply connected and $H_{k}(M;\zz) \not= 0$;
 \item [(b)]  $H_{i}(\pM ; \zz) =0$, $i=k-1, k$.
\end{itemize}
Then, $(M,g)$ has no complete Riemannian extension $(M',g')$ without boundary and satisfying $\sect_{M'} \leq 0$.
\end{propositionA}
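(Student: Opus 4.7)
The plan is to argue by contradiction using Cartan--Hadamard together with a Mayer--Vietoris computation in the universal cover. Suppose a complete Riemannian extension $(M',g')$ with $\partial M'=\emptyset$ and $\sect_{M'}\le 0$ existed, and let $\pi:\tilde M'\to M'$ be its universal Riemannian covering. Then $\tilde M'$ is simply connected, complete, and still satisfies $\sect\le 0$, so Cartan--Hadamard identifies it diffeomorphically with $\rr^m$; in particular $\tilde M'$ is contractible.

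Because $M$ is simply connected and locally path-connected, the inclusion $\iota:M\hookrightarrow M'$ lifts through $\pi$ to a continuous map $\tilde\iota:M\to\tilde M'$. The lift is a local homeomorphism and is set-theoretically injective (if $\tilde\iota(x)=\tilde\iota(y)$ then $x=\pi\circ\tilde\iota(x)=\pi\circ\tilde\iota(y)=y$), hence it is an embedding. I would write $\tilde M:=\tilde\iota(M)$ and $Q:=\overline{\tilde M'\setminus\tilde M}$. Since $(M,g_M)$ is complete and isometrically embedded, $\tilde M$ is closed in $\tilde M'$ with smooth boundary $\partial\tilde M\cong\pM$, and consequently $\tilde M\cap Q=\partial\tilde M$.

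Using a two-sided collar of $\partial\tilde M$ inside $\tilde M'$, I would thicken $\tilde M$ and $Q$ into open sets $U\simeq M$ and $V\simeq Q$ with $U\cap V\simeq \pM$. Then the Mayer--Vietoris sequence for $\tilde M'=U\cup V$ contains
\[
H_{k+1}(\tilde M';\zz)\to H_k(\pM;\zz)\to H_k(M;\zz)\oplus H_k(Q;\zz)\to H_k(\tilde M';\zz),
\]
and since $k\ge 2$ the contractibility of $\tilde M'$ annihilates both outer terms. Hypothesis (b) yields $H_k(\pM;\zz)=0$, forcing $H_k(M;\zz)\oplus H_k(Q;\zz)=0$ and contradicting hypothesis (a).

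The main obstacle is not algebraic but regularity-theoretic: I need to justify the Mayer--Vietoris setup carefully, establishing that $\tilde M\subset\tilde M'$ is a closed codimension-zero submanifold whose boundary is a smooth, two-sided hypersurface admitting a tubular collar neighborhood in $\tilde M'$. Once this is in place the argument is immediate; note that it uses only $H_k(\pM;\zz)=0$ and does not invoke $H_{k-1}(\pM;\zz)=0$ or the upper bound $k\le m-2$, so the author is presumably retaining these hypotheses to facilitate a complementary homotopical obstruction of the kind announced in Part~\ref{part-nonexistence} of the introduction.
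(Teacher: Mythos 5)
Your proof is correct and takes essentially the same route as the paper's: pass to the universal cover $\tilde M'$ of the putative extension, identify a copy of $M$ inside $\tilde M'$ using simple connectedness (the paper picks a connected component of $P^{-1}(M)$, which is precisely the image of your lift $\tilde\iota$), and derive a contradiction from Mayer--Vietoris against the contractibility of $\tilde M'$ given by Cartan--Hadamard. Your side observation is also accurate: the paper's exact-sequence argument, like yours, only exploits $H_k(\pM;\zz)=0$ to make $H_k(M;\zz)\oplus H_k(B;\zz)\to H_k(\tilde M';\zz)$ injective, so neither $H_{k-1}(\pM;\zz)=0$ nor $k\le m-2$ is actually used in the proof.
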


\begin{remark}
 {\rm
We stress that  $\pM$ must be non-convex. Otherwise, using some harmonic mapping theory for manifolds with boundary, \cite{Ha-LNM}, we would get that the compact, simply connected manifold $M$ of nonpositive curvature is necessarily contractible. Indeed, every element in $\pi_{k}(M)$, $k \geq 2$, is represented by a harmonic map in its homotopy class. Since $(M,g)$ has non-positive curvature and $\S^{k}$ has a metric of positive curvature, standard vanishing results based on the Weitzenb\"ock formula yield that the harmonic map is constant. Whence, we conclude that $\pi_{j}(M) = 0$, for every $j \geq 0$ and, hence, $M$ is contractible.
More generally, the sectional curvature $\sect_{\pM}$ of $\pM$ with respect to the induced metric must satisfy $\sect_{\pM} > 0$ at some point where $\pM$ is non-convex. Otherwise, as observed by Gromov, \cite{Gr-hyperbolic}, and according to the main result in \cite{ABB-TAMS}, $M$ has non-positive curvature in the sense of Alexandrov. Hence, the Gromov version of the Cartan-Hadamard theorem applies, \cite{AB-EnsMath}, proving that once again $M$ is contractible.
 }
\end{remark}
\begin{remark}
 {\rm
It would be interesting to obtain homological obstructions to a non-positively curved extension of $M$ when this latter space is non-simply connected (maybe with simply connected boundary). In this respect, recall that by passing to the universal covering does not preserve homology groups.
 }
\end{remark}

\begin{proof}
By contradiction, suppose that there exists a complete Riemannian extension $(M',g')$ of $(M,g)$ satisfying $\sect_{M'} \leq 0$. Let $P : (\tilde M',\tilde g') \to (M',g')$ be its Riemannian universal covering and consider the Riemannian manifold
$
 (\tilde M , \tilde g) = ( P^{-1}(M),\tilde g'|_{P^{-1}(M)})
 $
 with boundary
 $
 \partial \tilde M = P^{-1}(\pM).
 $
 Clearly, $(\tilde M ' ,\tilde g')$ is a complete Riemannian extension of $(\tilde M ,\tilde g)$ satisfying $\sect_{\tilde M'} \leq 0$. In particular, by the Cartan-Hadamard theorem, $\tilde M'$ is contractible. Now, the restricted map $P|_{\tilde M} : (\tilde M ,\tilde g) \to ( M,g)$  is still a covering projections and since $M$ is simply connected then $\tilde M$ is a disjoint union of isometric copies of $M$. We still denote with $(\tilde M,\tilde g)$ one of these components. It follows that we can identify $\tilde M \approx M$ and $\partial \tilde M \approx \pM$. Using collars, we  decompose $\tilde M'$ as the union of open sets $A \cup B$, where $A$ has the same homotopy type of $ M$ and $A \cap B$ has the same homotopy type of $\pM$. Applying the Mayer-Vietoris sequence
 \[
\to H_k(\pM;\zz) \to H_k(M;\zz) \oplus H_k(B;\zz) \to H_k(\tilde M';\zz) \to H_{k-1}(\pM;\zz)\to
 \]
 and using the topological assumptions (a), (b) we conclude that there exists an injective homomorphism
 \[
0\not= H_{k}(M;\zz) \hookrightarrow H_{k}(\tilde M' ;\zz).
\]
This contradicts the fact that $\tilde M'$ is contractible.
 \end{proof}
 
\begin{example}
 {\rm
 Let $Z$ be the $m(\geq 4)$-dimensional smooth manifold given by
 \[
 Z = (\S^{m_{1}} \times \S^{n_{1}})\#\cdots\#(\S^{m_{r}} \times \S^{n_{r}}),
 \]
 with $m_{i},n_{i} \geq 2$ and $\sum_{i=1}^r m_{i}+n_{i} = m$. We remove from $Z$ a smooth $m$-dimensional contractible disk $D$ so to obtain the $m$-dimensional compact manifold
 \[
 M = Z \setminus D
 \]
 with smooth boundary $\pM \simeq \S^{m-1}$. Next, using Lemma \ref{gromov}, we endow $M$ with a Riemannian metric $g$ of negative sectional curvature. We claim that $M$ satisfies the topological assumptions of Proposition \ref{prop-nonexistence-negative-sect} and, hence, $(M,g)$ cannot be extended to a complete Riemannian manifold of nonpositve curvature. This follows from the next observations.\smallskip
 
 \noindent $\bullet$ Take the connected sum $Q = M_{1} \# M_{2}$ with $M_{1},M_{2}$ smooth compact manifolds of  dimension $m \geq 3$. Topologically, $Q$ is obtained from $M_{1}$ and $M_{2}$ by removing an $m$-dimensional disk $D^{m}$ from each of these manifolds and identifying their $\S^{m-1}$-boundaries.\smallskip
 
 \noindent $\bullet$ Applying twice the Seifert-Van Kampen theorem we deduce that both $\pi_{1}(M_{j}) \simeq \pi_{1}(M_{j}\setminus D^{m})$ and $\pi_{1}(Q) \simeq \pi_{1}(M_{1}\setminus D^{m}) \ast \pi_{1}(M_{2} \setminus D^{m})$. Therefore, $Q$ is simply connected provided both $M_{1}$ and $M_{2}$ are simply connected.
 
\noindent $\bullet$ Similarly, we see that $Q \setminus D^{m}$ is simply connected provided both $M_{1}$ and $M_{2}$ are simply connected.
 
 \noindent $\bullet$ Using twice the Mayer-Vietoris sequence and that $H_{i}(\S^{m-1};\zz) =0$ for $i\not= 0 ,m-1$ we deduce, for every $k=2,\cdots, m-2$,
 \[
\begin{cases}
 H_{k}(M_{j};\zz) \simeq H_{k}(M_{j} \setminus D^{m};\zz) \\
 H_{k}(Q;\zz) \simeq H_{k}(M_{1} \setminus D^{m};\zz) \oplus H_{k}(M_{2} \setminus D^{m};\zz).
\end{cases}
 \]
 Therefore,  $H_{k}(Q;\zz) \not= 0$ for some $2 \leq k \leq m-2$ provided either $H_{k}(M_{1};\zz) \not=0$ or $H_{k}(M_{2};\zz) \not=0$. \smallskip
 
  \noindent $\bullet$ To conclude that $M$ satisfies the assumptions of Proposition \ref{prop-nonexistence-negative-sect}, we combine the previous observations with the  K\"unneth formula in the absence of torsion: 
  \begin{align*}
  H_{k}(M;\zz) \simeq H_{k}(Z;\zz) &\simeq \oplus_{i=1}^{r}H_{k}(\S^{m_{i}}\times \S^{n_{i}};\zz)\\
  &\simeq \oplus_{i=1}^{r}\oplus_{a+b=k} H_{a}(\S^{m_{i}};\zz) \otimes H_{b}(\S^{n_{i}};\zz).
  \end{align*}
  This shows that $H_{k}(M;\zz) \not=0$ for $2 \leq k=m_{i},n_{i} \leq m-2$.
 }
\end{example}
\begin{example}
 {\rm
 Another family of examples in dimension $m = n+5 \geq 8$ can be obtained by taking the product
\[
M^{m} = ( N^{5} \setminus D^{5} ) \times \S^{n}, \, n \geq 3
\]
  where $N^{5}$ is any of the closed simply connected $5$-manifolds with $H_{2}(N^{5} ;\zz) \not=0$ constructed by D. Barden in \cite{Ba}, and $D^{5}$ is a small $5$-disk inside $N^{5}$. Thus, $M^{m}$ is a simply connected, compact manifold with boundary
  \[
  \pM^{m} \approx \S^{4} \times \S^{n}.
  \]
Using again the K\"unneth formula, and recalling also that $\mathrm{Tor}(\cdot, \zz) = 0$, we see that
 \[
  H_{2}(M^{m};\zz) \simeq H_{2}(N^{5}\setminus D^{5};\zz) \otimes \zz \simeq H_{2}(N^{5} ;\zz) \otimes \zz  = H_{2}(N^{5};\zz)\not= 0.
 \]
and
\[
H_{1}(\pM^{m};\zz) = H_{2}(\pM^{m};\zz) =0.
\]
Therefore, Proposition \ref{prop-nonexistence-negative-sect} applies and gives that $M^{m}$ is not a domain into a complete Riemannian manifold of nonpositive sectional curvature.
 }
\end{example}

Now we consider obstructions of homotopical nature.
\begin{theoremA}\label{th-nonexistence-negative}
 Let $(M,g)$ be a complete, $m$-dimensional Riemannian manifold with boundary $\partial M \not= \emptyset$. Assume that $M$ can be embedded as a domain inside a smooth $m$-dimensional  manifold $N$ without boundary such that:
\begin{itemize}
 \item [(a)] $B = N \setminus M$ is contractible;
 \item [(b)] $\partial M = \partial B$ is $(n-2)$-connected, for some $n \geq 3$
 \item [(c)]  $\pi_{k}(N) \not=0$ for some $2 \leq k \leq n-1$;
\end{itemize}
Then $(M,g)$ has no complete Riemannian extension $(M',g')$ without boundary and satisfying $\sect_{M'} \leq 0$.
\end{theoremA}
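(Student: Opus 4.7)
The plan is to argue by contradiction, extracting from the Cartan-Hadamard theorem and the topological hypotheses two conflicting computations of the homotopy groups of a single auxiliary space. Suppose that $(M',g')$ is a complete Riemannian extension with $\sect_{M'}\leq 0$; by Cartan-Hadamard the universal cover $\tilde M'$ is diffeomorphic to $\R^{m}$ and hence contractible. Using the inclusion $\partial M\hookrightarrow M'$, glue the cap $B$ onto $M'$ along the common boundary to form
\[
N':=M'\cup_{\partial M}B.
\]
Since $n\geq 3$ makes $\partial M$ simply connected and $B$ is simply connected (being contractible), Seifert-Van Kampen gives $\pi_{1}(N')\cong \pi_{1}(M')$.

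First I argue that $\pi_{k}(N')\neq 0$ for some $2\leq k\leq n-1$. Decomposing $M'=M\cup_{\partial M}M'_{0}$ with $M'_{0}:=\overline{M'\setminus M}$, one has
\[
N'=N\cup_{\partial M}M'_{0},
\]
and the attaching map $\partial M\to N$ factors as $\partial M\hookrightarrow B\hookrightarrow N$. Because $B$ is contractible this attaching map is null-homotopic, while the inclusion $\partial M\hookrightarrow M'_{0}$ of the boundary (with its collar neighborhood) is a cofibration. By homotopy invariance of pushouts along null-homotopic cofibrations, one gets
\[
N'\simeq N\vee (M'_{0}/\partial M),
\]
and the collapse of the wedge summand $M'_{0}/\partial M$ is a retraction $N'\to N$. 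Consequently the inclusion $N\hookrightarrow N'$ induces a split injection $\pi_{k}(N)\hookrightarrow \pi_{k}(N')$, and this group is nontrivial for some $2\leq k\leq n-1$ by assumption~(c).

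On the other hand, $\pi_{k}(N')$ can be computed from above via the universal cover $\tilde N'\to N'$. Since $\partial M$ is simply connected, its preimage in $\tilde M'\subset \tilde N'$ is a disjoint family $\{\widetilde{\partial M}_{\alpha}\}_{\alpha\in \pi_{1}(M')}$ of copies of $\partial M$, and a copy $B_{\alpha}$ of $B$ is glued onto each of them. Thus $\tilde N'$ is realized as the homotopy pushout
\[
\tilde M'\longleftarrow \bigsqcup_{\alpha}\widetilde{\partial M}_{\alpha}\longrightarrow \bigsqcup_{\alpha}B_{\alpha}.
\]
Replacing the contractible $\tilde M'$ by a single point and each contractible $B_{\alpha}$ by a point preserves the homotopy type of the pushout; collapsing each resulting cylinder $\widetilde{\partial M}_{\alpha}\times [0,1]$ at both ends then identifies $\tilde N'$ with
\[
\tilde N'\simeq \bigvee_{\alpha\in \pi_{1}(M')}\Sigma\,\partial M.
\]
Since $\partial M$ is $(n-2)$-connected, each $\Sigma\,\partial M$ is $(n-1)$-connected, and so is $\tilde N'$. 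Therefore $\pi_{k}(N')=\pi_{k}(\tilde N')=0$ for every $2\leq k\leq n-1$, contradicting the previous step.

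The hardest point is the homotopy pushout computation: one has to verify carefully that replacing the contractible pieces $\tilde M'$ and $B_{\alpha}$ by points actually preserves the homotopy type of the pushout, and that the resulting, potentially infinite, wedge indexed by $\pi_{1}(M')$ is still $(n-1)$-connected. The wedge decomposition of $N'$ in the lower-bound step depends crucially on the null-homotopy of the attaching map, which in turn relies on the contractibility of the cap $B$ from assumption~(a).
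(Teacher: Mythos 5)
Your proof is correct, and it takes a genuinely different route from the paper's. The paper proceeds via homotopy excision: Lemma~\ref{lemma-homotopy} shows that the inclusion $M\hookrightarrow N$ is a $\pi_j$-isomorphism for $j\leq n-2$ and a $\pi_{n-1}$-surjection, so a nontrivial class in $\pi_k(N)$ is represented by a sphere mapped into a core $M_0=M\setminus\mathcal{W}$; it then builds an explicit map $\tilde F:M'\to N$ that is the identity on $M_0$, runs a contraction of $B$ on the collar $\mathcal{W}$, and is constant on $M'\setminus M$, after which the asphericity of $M'$ transports a nullhomotopy of that sphere into $N$ through $\tilde F$ and yields the contradiction. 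You instead form the auxiliary ``doubly capped'' space $N'=M'\cup_{\partial M}B$ and compute $\pi_k(N')$ in two competing ways: contractibility of $B$ makes the attaching map $\partial M\to N$ nullhomotopic, producing the wedge splitting $N'\simeq N\vee(M'_0/\partial M)$ and hence a split injection $\pi_k(N)\hookrightarrow\pi_k(N')$; while lifting to the Cartan--Hadamard cover of $M'$ exhibits the corresponding cover of $N'$ as a wedge $\bigvee\Sigma\,\partial M$ of suspensions, which is $(n-1)$-connected because $\partial M$ is $(n-2)$-connected. Both proofs rest on the same three ingredients --- contractibility of $B$, $(n-2)$-connectivity of $\partial M$, asphericity of $M'$ --- but deploy them differently: you trade the Blakers--Massey input of Lemma~\ref{lemma-homotopy} for the elementary wedge decomposition and the suspension-connectivity bound, which is arguably more structural. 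The minor price, which you correctly flag, is having to check that the wedge over the possibly infinite index set $\pi_1(M')$ is still $(n-1)$-connected; this is standard for CW complexes (reduced homology of a wedge is the direct sum, plus Hurewicz and van Kampen for simple connectivity) but does need to be said, whereas the paper's map $\tilde F$ sidesteps any discussion of the size of $\pi_1(M')$.
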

We shall use the following intuitive fact.
\begin{lemma}\label{lemma-homotopy}
 Let $N$ be a connected $m$-dimensional manifold given by the union of connected $m$-dimensional manifolds $M$ and $B$ with boundary $\pM = \partial B$. Assume that $B$ is $(n-1)$-connected and that $\pM = \partial B$ is $(n-2)$-connected, for some $n \geq 3$. Then, the inclusion $i: M \hookrightarrow N$ induces isomorphisms $i_{\sharp_{h}}: \pi_{h}(M) \to \pi_{h}(N)$, for $h =0 ,\cdots ,n-2$ and a surjective homomorphism $i_{\sharp_{n-1}} :  \pi_{n-1}(M) \to \pi_{n-1}(N)$. In particular, if $\pi_{n-1}(N) \not=0$ then $i_{\sharp_{n-1}}$ is a nontrivial homomorphism.
\end{lemma}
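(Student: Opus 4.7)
The plan is to derive the conclusion from the long exact sequence of homotopy groups for the pair $(N,M)$ by showing that $\pi_i(N,M) = 0$ for all $0 \le i \le n-1$. The key intermediate step is to identify the relative groups $\pi_*(N,M)$ with $\pi_*(B,\partial B)$ in this range via homotopy excision.

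First I would verify that the pair $(B,\partial B)$ is $(n-1)$-connected. Indeed, from the long exact sequence
\[
\cdots \to \pi_h(\partial B) \to \pi_h(B) \to \pi_h(B,\partial B) \to \pi_{h-1}(\partial B) \to \cdots,
\]
the hypothesis that $B$ is $(n-1)$-connected yields $\pi_h(B) = 0$ for $h \le n-1$, while the hypothesis that $\partial B$ is $(n-2)$-connected yields $\pi_{h-1}(\partial B) = 0$ for $h \le n-1$. Hence $\pi_h(B,\partial B) = 0$ for all $h \le n-1$. Note also that $\partial M = \partial B$ is connected and in fact simply connected, since $n \ge 3$.

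Next I would apply the Blakers--Massey homotopy excision theorem to the triad $(N; B, M)$ with $B \cap M = \partial B = \partial M$. Using collar neighborhoods of $\partial M$ inside $M$ and of $\partial B$ inside $B$, the triad is excisive and of the CW type required. The pair $(B,\partial B)$ is $(n-1)$-connected by the previous step, while $(M,\partial M)$ is at least $0$-connected since $M$ is connected and $\partial M \neq \emptyset$. Homotopy excision therefore provides that the inclusion of pairs $(B,\partial B) \hookrightarrow (N,M)$ induces an isomorphism $\pi_i(B,\partial B) \to \pi_i(N,M)$ for $i < n-1$ and a surjection for $i = n-1$. Combined with the vanishing of $\pi_i(B,\partial B)$ just established, this gives $\pi_i(N,M) = 0$ for every $0 \le i \le n-1$.

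Finally I would plug this vanishing into the long exact sequence of the pair $(N,M)$,
\[
\cdots \to \pi_{h+1}(N,M) \to \pi_h(M) \xrightarrow{i_{\sharp_h}} \pi_h(N) \to \pi_h(N,M) \to \cdots.
\]
For $0 \le h \le n-2$, both adjacent relative groups vanish, whence $i_{\sharp_h}$ is an isomorphism; for $h = n-1$, only the right-hand group vanishes, whence $i_{\sharp_{n-1}}$ is surjective. The concluding remark then follows at once, since a surjection onto a nontrivial group is nontrivial. The delicate point I expect to have to justify carefully is the application of the Blakers--Massey theorem when one of the connectivity parameters is only $0$: either one invokes a version of excision valid in that generality (which holds here because $\partial M$ is simply connected and the triad is excisive), or one circumvents the issue by a standard argument using a collar of $\partial M$ in $M$ to replace $(M,\partial M)$ by a homotopy equivalent pair with higher connectivity.
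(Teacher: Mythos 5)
Your proposal is correct and follows essentially the same path as the paper's proof: use the long exact sequence of $(B,\partial B)$ to establish $(n-1)$-connectedness of that pair, invoke Blakers--Massey homotopy excision (the paper cites Hatcher, Theorem 4.23, after arranging a CW structure via a triangulation adapted to $\partial M$) to conclude $(N,M)$ is $(n-1)$-connected, and finish with the long exact sequence of $(N,M)$. The technical point you flag about one connectivity parameter being only $0$ is handled in the paper exactly as you suggest, by working with the CW triad and noting that $(M,\partial M)$ is $0$-connected, which is all that Hatcher's statement requires.
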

\begin{proof}
Choose a triangulation of $\partial M = \partial B$ and extend it to a triangulation of $N$. In this way, we can consider $M,B$ as $CW$-subcomplexes of  the $CW$-complex $N$ and $\partial M = \partial B = M \cap B$ as a $CW$ subcomplex of both $M$ and $B$. Let us choose once and for all a point $x_{0} \in B \cap M$ and assume this is the base point in all the homotopical considerations that will follow.

From the long exact homotopy sequence of the pair $( B, \partial B)$:
\[
\to \pi_{k}(B) \to  \pi_{k}(B,\partial B) \to \pi_{k-1}(\partial B) \to \pi_{k-1} (B) \to
\]
since $B$ is $(n-1)$-connected we get
\[
\pi_{k}( B,\partial B) \simeq \pi_{k-1}(\partial B),\, k=0,\cdots,n-1
\]
and since $\partial B$ is $(n-2)$-connected, we deduce that $(B, \partial B)$ is $(n-1)$-connected. Obviously the pair $(M,\pM)$ is $0$-connected. It follows from the homotopy excision theorem, \cite[Theorem 4.23]{Hat-algtop}, that  the inclusion $j : (B, \partial B) \hookrightarrow (N,M)$ induces the isomorphisms
\[
0= \pi_{k}( B,\partial B ) \to \pi_{k}(N,M),\, k=0,\cdots,n-2
\]
and a surjective homomorphism
\[
0 =\pi_{n-1+0}( B, \partial B ) \to \pi_{n-1+0}(N,M),
\]
proving that $(N,M)$ is $(n-1)$-connected. Using this information into the long exact homotopy sequence of the pair $(N,M)$:
\[
\to  \pi_{k}(N,M) \to \pi_{k-1}(M) \to \pi_{k-1} (N) \to \pi_{k-1}(N,M) \to
\]
we conclude that the inclusion map $i : M \hookrightarrow N$ induces  the isomorphisms
\[
i_{\sharp_{h}}:\pi_{h}(M) \simeq \pi_{h}(N), \, h = 0,\cdots, n-2,
\]
and the surjective homomorphism $i_{\sharp_{n-1}}$. This completes the proof of the lemma.
\end{proof}
\begin{proof}
We take a closed collar neighborhood $\mathcal{W} \approx \partial M \times [-1,0]$ of $\partial M$ in $M$ and we set $M_{0} = M \setminus \mathcal{W}$. Then $B_{0} = N \setminus M_{0}$ is contractible because it is a deformation retract of $B$. It follows that there exists a homotopy $H : B_{0} \times [0,1] \to B_{0}$ between $H(\cdot,0) = \mathrm{id}_{B_{0}}$ and $H(\cdot,1) = \epsilon_{q}$, the constant map at $q \in B$. 

 Now, we define a continuous function $F : M=M_{0} \cup \mathcal{W} \to N$ by
 \[
 F(p) =
\begin{cases}
 \mathrm{id}(p) & p \in M_{0}\\
 H(x,t+1) & p=(x,t) \in \mathcal{W}.
\end{cases}
 \]
 Observe that
 \[
 F(p) \equiv q, \, \text{on } \partial M.
 \]
 Therefore, if $(M,g)$ admits a complete Riemannian extension $(M',g')$ satisfying $\sect_{M'} \leq 0$ we can extend $F$ to a continuous map $\tilde F : M' \to N$ by setting
 \[
 \tilde F(p) =
\begin{cases}
 F(p) & p \in M\\
 q & p \in M'\setminus M.
\end{cases}
 \]
 In particular, $\tilde F = \mathrm{id}$ on $M_{0}$. With this preparation, and according to Lemma \ref{lemma-homotopy}, we take a representative $\alpha : S^{k} \to M_{0}$ of a non-trivial class in $\pi_{k}(N)$. Since $M'$ is aspherical, $\alpha$ is homotopically trivial in $M'$. But then $\tilde F \circ \alpha = \alpha$ is homotopically trivial in $N$. Contradiction.
\end{proof}

\begin{example}
 {\rm
The assumptions of Theorem \ref{th-nonexistence-negative} are satisfied by any compact $m(\geq 3)$-dimensional manifold $M$ which is obtained by removing a smooth disk $D^{m}$ from a compact manifold $N$ satisfying $\pi_{k}(N) \not=0$ for some $2 \leq k \leq m-1$. For instance, let $N = \S^{m_{1}} \times \S^{m_{2}}$ with $m_{1},m_{2} \geq 1$, $m_{1}+m_{2}=m \geq 3$, and define the manifold with smooth boundary $M = N \setminus D^{m}$. Then, $\pM \approx \S^{m-1}$ is $(m-2)$-connected and $\pi_{j}(N) \simeq \pi_{j}(\S^{m_{1}}) \times \pi_{j} (S^{m_{2}}) \not=0$ for $2 \leq j=\max(m_{1},m_{2}) \leq m-1$. According to Lemma \ref{gromov}, we endow $M$ with a metric $g$ satisfying $\sect_{M}<0$. Therefore, Theorem \ref{th-nonexistence-negative} applies and gives that $(M,g)$ cannot be extended to a complete Riemannian manifold of non-positive sectional curvature.
 }
\end{example}

\part{Existence of complete extensions under curvature conditions}\label{part_existence}

\section{Extending complete manifolds with compact convex boundary} \label{section-existence-convex}

As alluded to in the previous parts of the paper, the presence of a convexity condition on the boundary implies a control on the topology and helps the existence of a complete Riemannian extension where a given curvature bound is preserved. We are going to illustrate this claim by constructing complete Riemannian extensions both under a lower Ricci or scalar curvature bound and with negative sectional curvature.\smallskip

Recall that, according to our convention, the boundary $\partial M \not=\emptyset$ of $(M,g)$ is \textsl{(strictly) convex} if, with respect to the outward pointing unit normal $\nu$ the second fundamental form at each point of $\partial M$ satisfies $\mathrm{II} \leq 0$ (resp. $<0$) in the sense of quadratic forms.

A first natural question to ask is whether an intrinsic convexity of a manifold $M$ with boundary, i.e. (strictly) convexity of its boundary, implies that the manifold $(M,g)$ at hand can be seen as a convex piece of one of its complete extensions $(M',g')$.
In order to answer  this question, let us collect here below some extrinsic notions of convexity. It is worthwhile to recall that several other sligthly different notions of convexity can be found in the literature.  
\begin{definition}
 The complete Riemannian manifold $(M,g)$ with boundary $\partial M \not=\emptyset$ is said to be:
\begin{itemize}
 \item \textsl{strongly convex} if, for every $p,q \in M$, any geodesic of $M$ connecting $p$ with $q$ is contained in $\inte M$ with the possible exception of the endpoints.
 \item \textsl{domain-strongly-convex} if there exists a complete Riemannian extension $(M',g')$ of $M$ such that $M$ is a strongly convex domain. This means that for every $p,q \in M$, any geodesic of $M'$ connecting $p$ with $q$ is contained in $\inte M$ with the possible exception of the endpoints.
\end{itemize}
 \end{definition}
We then have  the following implications.
\begin{lemma} \label{lem_convexity}
 Let $(M,g)$ be a complete Riemannian manifold with compact boundary $\partial M \not=\emptyset$. Then,
\begin{itemize}
 \item [(a)]If $\partial M$ is strictly convex, then $M$ is domain-strongly-convex;
 \item [(b)] If $M$ is domain-strongly-convex, then it is strongly convex.
 \item [(c)] If $M$ is strongly convex, then $\pM$ is convex.
\end{itemize}
\end{lemma}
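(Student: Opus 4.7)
\emph{Setup.} Throughout I work in Fermi coordinates $(x,t)$ around $\partial M$, with $t<0$ inside $M$ and $t>0$ outside, in which the metric takes the form $dt^2+g_t$; a standard Christoffel computation yields $\partial_t g_t=-2\,\II_t$, so strict convexity $\II<0$ of $\partial M$ is equivalent to the symmetric bilinear form $\partial_t g_t|_{t=0}$ being positive definite.

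\emph{Proof of (a).} I would build an extension $(M',g')=M\cup_{\partial M}(\partial M\times[0,\infty))$ by smoothly continuing $g_t$ to $t\ge 0$ while maintaining $\partial_t g_t>0$ for every $t\ge 0$ (for instance by interpolating into a warped product $dt^2+e^{2t}g_0$ for large $t$); compactness of $\partial M$ and this growth ensure that $(M',g')$ is complete. A coordinate calculation gives $\Hess t=\tfrac12\partial_t g_t$ on directions tangent to the parallel hypersurfaces and $\Hess t(\partial_t,\cdot)=0$, so $\Hess t\ge 0$ throughout the cylinder/Fermi region, with strict positivity on tangential directions. Given any $M'$-geodesic $\gamma:[0,1]\to M'$ joining points of $M$, set $\phi=t\circ\gamma$. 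On any maximal subinterval $(a,b)\subset[0,1]$ where $\phi>0$, the geodesic lies in the cylinder, so $\phi''\ge 0$; continuity forces $\phi(a)=\phi(b)=0$, and convexity then gives $\phi\le 0$ on $(a,b)$, a contradiction. Hence $\phi\le 0$, so $\gamma\subset M$. If moreover $\phi(s_0)=0$ for some $s_0\in(0,1)$, then $s_0$ is a local maximum of $\phi$, forcing $\gamma'(s_0)\in T\partial M$; but strict tangential positivity of $\Hess t$ then gives $\phi''(s_0)>0$, contradicting $s_0$ being a maximum. Thus $\gamma((0,1))\subset\inte M$.

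\emph{Proof of (b).} The idea is purely local minimization. Fix a geodesic $\gamma:[0,1]\to M$ and $t_0\in(0,1)$, and pick $\delta>0$ so small that $\gamma|_{[t_0-\delta,t_0+\delta]}$ is $d_M$-minimizing and its endpoints $p',q'$ lie inside a convex ball of $(M',g')$. By domain-strong-convexity, the unique short $M'$-geodesic $\sigma$ joining $p',q'$ lies in $\inte M$, so $d_{M'}(p',q')=L_M(\sigma)\le d_M(p',q')$; since $d_{M'}\le d_M$ always, equality holds and $\gamma|_{[t_0-\delta,t_0+\delta]}$ is also a minimizing $M'$-path between $p',q'$. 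Uniqueness of short $M'$-geodesics then forces $\gamma|_{[t_0-\delta,t_0+\delta]}=\sigma\subset\inte M$, and in particular $\gamma(t_0)\in\inte M$.

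\emph{Proof of (c) and main obstacle.} I argue by contradiction. If $\II(X_0,X_0)>0$ at some unit $X_0\in T_{p_0}\partial M$, continuity provides a neighborhood $U\subset\partial M$ of $p_0$ and an open cone around $X_0$ on which $\II$ is uniformly positive. Let $c$ be the $\partial M$-geodesic through $p_0$ with velocity $X_0$, and pick $p,q\in U$ close enough that $c|_{[p,q]}$ is $d_{\partial M}$-minimizing. For any $C^1$-curve $\tilde c(s)=(y(s),t(s))$ in $M$ near $c|_{[p,q]}$ joining $p$ to $q$ (with $t(s)\le 0$), the expansion $g_t=g_0-2t\,\II+O(t^2)$ and positivity of $\II(y',y')$ on the cone give $g_{t(s)}(y',y')\ge g_0(y',y')$ for small $|t|$, whence
\[
L_M(\tilde c)\ge\int\sqrt{g_0(y'(s),y'(s))}\,ds=L_{\partial M}(y)\ge L_{\partial M}(c|_{[p,q]})=L_M(c|_{[p,q]}).
\]
So $c|_{[p,q]}$ is a locally $d_M$-minimizing path entirely contained in $\partial M$, hence an $M$-geodesic not lying in $\inte M$, contradicting strong convexity. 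The delicate step is (a): smoothly matching the Fermi form of $(M,g)$ to the outward cylindrical metric without breaking $\partial_t g_t>0$ across $t=0$, and then controlling the Hessian of the distance-to-$\partial M$ function on the entire region a geodesic may visit once it exits $M$.
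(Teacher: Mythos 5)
Parts (a) and (b) are essentially the paper's own argument. In (a) you build an extension on which the signed distance $s$ to $\partial M$ is convex (and strictly so in the tangential directions), then run the convexity argument on $f=s\circ\gamma$; the paper does the same, only its outer model is $k^{-1/2}\sinh(\sqrt{k}s)\,g_{\partial M}$ with $k\gg 1$ rather than your $e^{2t}g_0$. The large parameter $k$ is precisely what guarantees that the partition-of-unity interpolation keeps $\Hess s>0$ (tangentially) across the transition region, so if you keep $e^{2t}g_0$ you should still introduce a scale parameter to control the gluing there; otherwise $\partial_t g_t>0$ may fail in the middle. In (b) your ``local minimization plus uniqueness of short $M'$-geodesics'' is the same comparison of $d_M$ and $d_{M'}$ that the paper carries out by contradiction.

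For (c) your route is genuinely different from the paper's, and as written it has a gap. The paper picks the geodesic $\gamma$ of the \emph{ambient} extension with $\gamma(0)=x$ and $\dot\gamma(0)=v$, where $\II(v,v)>0$, and looks at $f=s\circ\gamma$: then $f(0)=f'(0)=0$ and $f''(0)=\Hess s(v,v)=-\II(v,v)<0$, so $f<0$ on a punctured neighbourhood of $0$, and $\gamma$ (being an ambient geodesic that lies in $M$) is an $M$-geodesic touching $\partial M$ at an interior parameter -- contradiction with strong convexity, with no length comparison needed. You instead try to show the \emph{boundary} geodesic $c$ with $\dot c(0)=X_0$ is a local $M$-geodesic by comparing lengths in Fermi coordinates. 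The problematic step is $g_{t(s)}(y',y')\ge g_0(y',y')$: from $g_t=g_0-2t\,\II+O(t^2)$ with $t\le 0$, this needs $\II(y',y')\ge 0$, which you only know when $y'$ lies in the cone around $X_0$. But local $d_M$-minimization of $c$ requires comparison with \emph{all} Lipschitz competitors between nearby endpoints, not only $C^1$-near ones, and a general competitor's tangential projection need not stay in that cone (on $\partial M$ the form $\II$ also has strictly negative directions, on which the inequality reverses). The statement you are aiming at -- a boundary curve whose normal acceleration points outward is an $M$-geodesic -- is correct and is the obstacle-problem criterion of \cite{ABB-Illinois}, but your length comparison does not prove it; the paper's ambient-geodesic argument sidesteps the issue entirely and is the cleaner route here.
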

\begin{remark}{\rm
Some of the reverse implications fail. More precisely:
\begin{itemize}
\item [(i)] The converse of (a) is not true: $M=\{(x_1,\dots,x_n)\in\R^n: \ \sum x_i^4=1\}$ is domain-strogly-convex, but $\pM$ is not strictly convex.
\item [(ii)] The converse of (c) is not true: $M=[0,1]\times \mathbb T^{n-1}$ is not strongly convex, but $\pM$ is convex. 
\item [(iii)] We do not know if the converse of (b) holds or not.
\end{itemize}}
\end{remark}
\begin{proof}

(a)  By Corollary \ref{th_extension}, we can always consider a  complete Riemannian extension $(N,h)$ of $(M,g)$ so that $\Sigma_{0} = \partial M$ is a compact embedded hypersurface of $N$ with second fundamental form $\II$ with respect to the unit normal $\nu$. For $S$ small enough, the normal exponential map $\exp^{\perp}(s \nu(x))$ is a smooth diffeomorphism on $(-S,S) \times \Sigma_{0}$ and, within the normal tubular neighborhood $ \mathcal{U} = \exp_{\Sigma}^{\perp}((-S,S) \times \Sigma_{0})$, the $s$-coordinate represents the smooth, signed distance function from $\Sigma_{0}$. In particular, because of our choice of $\nu$, we have $s(p) < 0$ for every $p \in \inte M \cap \mathcal{U}$. Moreover, the second fundamental form of the parallel hypersurface
\[
\Sigma_{\bar s} = \{x \in \mathcal{U} : s(x) = \bar s\}
\]
is given by
 \[
\mathrm{II_{\Sigma_{\bar s}}}(X,X) = - \Hess(s) (X,X),
 \]
and, up to take a smaller $S$, $\mathrm{II_{\Sigma_{\bar s}}}$ is strictly negative except for the radial direction.

We are going to adapt the construction of \cite[Theorem 4.1]{PV-convex} so to obtain a complete Riemannian extension $(M',g')$ of $(M,g)$ such that the signed distance function $s : \pM \to \rr$ is smooth and strictly convex on $M' \setminus \inte M$ (except for the radial direction $\nabla s$).
To this end, note that on $(-S,S)\times\pM=\left(\exp_\Sigma^\perp\right)^{-1}(\mathcal U)$ the pulled-back metric writes 
\[
\left(\exp_\Sigma^\perp\right)^{\ast}h(s,x)=ds^2+h_s(x)
\] 
for some metric $h_s$ on $\pM$ which depends on $s$.
For $k>0$, consider the family of metrics $j^{(k)}$ on $\pM$ defined as
\[j^{(k)}(s,x):= k^{-1/2} \sinh(\sqrt{k}s)(g_{\partial M})(x),\]
where $g_\pM$ is the Riemannian metric induced by $g$ on $\pM$.
Consider a smooth partition of unity $\phi_j,\phi_h\in C^\infty((0,+\infty))$ such that
\[
0\leq\phi_h(t)\leq 1,\quad\phi_h|_{(0,S/4]}\equiv 1,\quad\phi_h|_{[S/2,\infty)}\equiv 0,\quad\phi_h' \leq 0
\]
and
\begin{equation*}
\phi_j(t)+\phi_h(t)=1,\quad\forall t\in(0,+\infty).
\end{equation*}
Let 
\[
\sigma(s,x):=\phi_h(s)h_s(x)+\phi_j(s)j^{(k)}(s,x),
\]
and define the metric $g'$ on
\[
M':= M \cup \left((0,+\infty)\times \pM \right)
\]
as
\[
g'=\begin{cases}g&\text{on }M,\\ds^2+\sigma(s,x)&\text{on }(-S,+\infty)\times \pM.\end{cases}
\] 
Note that $(M',g')$ is a well defined $n$-dimensional Riemannian manifold. Reasoning as in \cite{PV-convex}, and adapting Lemma 4.2 therein to this setting, we get that for $k$ large enough the signed distance function $s : (-S,+\infty)\times \pM \subset M' \to \rr$ is smooth and $\Hess(s)(X,X)>0$ whenever $X$ is not parallel to $\nabla s$, whereas $\Hess(s)(\nabla s,\nabla s)=0$.

Now, let $x,y \in M$ and let $\gamma : [0,1] \to M'$ be any constant speed geodesic of $M'$ connecting $x$ and $y$.
Consider the function $f:[0,1]\to\R$ given by 
\[
f(t) = s \circ \gamma(t).
\]
Observe that
\begin{itemize}
 \item [(i)] $f \in  C^{\infty}([0,1])$ is well defined;
 \item [(ii)] $f(0) \leq 0$, $f(1)\leq 0$;
 \item [(iii)] $f'(t) = g\big(\nabla s (\gamma(t)) , \dot{\gamma}(t) \big)$;
\item [(iv)] $f'' (t) = \Hess(s)(\dot{ \gamma}(t) ,\dot{\gamma}(t)) \geq 0$, and $f''(t)>0$ whenever 
$$|g(\dot\gamma(t),\nabla s(\gamma(t)))|<|\dot\gamma(t)||\nabla s(\gamma(t))|.$$
\end{itemize}
In particular $f(t)\leq 0$ for all $t\in[0,1]$, i.e. $\gamma:[0,1]\to M$ is also a geodesic of $M$. Suppose $f(t)\equiv 0$. In this case $g\big(\nabla s (\gamma(0)) , \dot{\gamma}(0) \big)=0$, which in turn implies $f'' (0)>0$, giving a contradiction. Then $f(t_0)<0$ for some $t_0\in (0,1)$. Since $f$ is convex and $f(0),f(1)\leq 0$, then $f(t)<0$ for all $t\in (0,1)$.\\

(b) Suppose by contradiction that there exists a geodesic $\gamma$ of $M$ touching the boundary in a non-extremal point. Namely, since geodesics are locally minimizing, we can suppose that there exists an $\e>0$ and a (unit speed) geodesic $\gamma:[-\e,\e]\to M$ such that $\gamma(0)\in\pM$ and $\gamma$ realizes the distance in $M$ between $\gamma(-\e)$ and $\gamma(\e)$.
Let $M'$ be the extension given by the domain-strong-convexity of $M$. Let $\tilde\gamma:[-\tilde \e, \tilde \e]\to M'$ be any (unit speed) geodesic of $M'$ such that
\begin{enumerate}
\item $\tilde\gamma(-\tilde \e)=\gamma(-\e)$ and $\tilde\gamma(\tilde \e)=\gamma(\e)$,
\item $\tilde\gamma$ realizes the distance in $M'$ between $\gamma(-\e)$ and $\gamma(\e)$.
\end{enumerate}
By assumption, $\tilde\gamma([-\tilde \e,\tilde \e])\subset M$ and $L(\tilde\gamma)\leq L(\gamma)$.
Now, if $L(\tilde\gamma)< L(\gamma)$, then $\gamma$ is not locally minimizing in $M$, contradicting our assumption. Then $L(\tilde\gamma)= L(\gamma)$, which means that $\gamma$ is also a geodesic of $M'$, thus contradicting the domain-strongly-convexity of $M$.\\

(c) Suppose by contradiction that for some point $x\in\pM$ and  vector $v \in T_x\pM$ we have $\II_{\pM}(v,v)> 0$. Let $\gamma:[-1,1]\to N$ be the constant speed geodesic of $N$ such that $ \gamma(0)=x$ and $\dot\gamma(0)=v$. Consider the function $f=s\circ \gamma$ which is well-defined and smooth on $[-\e,\e]$ for some $0<\e\leq 1$. Computing as in the previous implication,  we get that $f(0) = 0$ and $f'(0) = 0$. Moreover 
\[
f'' (0) = \Hess(s)(\dot{\gamma}(0) ,\dot{ \gamma}(0))= \Hess(s)(v ,v)< 0,
\]
so that $f'' (t)<0$ on $[-\e',\e']\setminus \{0\}$ for some $0<\e'\leq \e$.
Thus $\gamma:[-\e',\e']\to M$ is a geodesic of $N$, hence of $M$, with endpoints in $\inte M$ and touching $\pM$. Contradiction.
\end{proof}

\subsection{Existence of complete extensions with $\ric > 0$}\label{section_perelman}
Let $(M,g)$ be a complete Riemannian manifold with compact and strictly convex boundary $\partial M \not= \emptyset$. Assume that $\ric_{M} > 0$. Using Corollary \ref{corollary-localcurvature}, we construct a complete Riemannian extension $(M',g')$ of  $(M,g)$ such that $\ric_{M'} > 0$ and $\partial M'$ is still compact and strictly convex. Now, a result by Perelman, \cite{Per, Wan}, states that the $\Lip$-metric induced by $g'$ on the double $N = M' \cup_{\mathrm{id_{\partial M'}}} M'$ can be smoothen out in an arbitrarily small neighborhood of $\partial M' \subset N$ so to obtain a smooth metric $h$ satisfying $\ric_{h}>0$. Clearly, we can always assume that the perturbed neighborhood of $\partial M' \subset N$ is so small that $(N,h)$ is a Riemannian extension of $(M,g)$. Finally, since the boundary of $M'$ is compact,  $(N,h)$ is obviously complete. We have thus obtained the validity of the following result.
\begin{theoremA}\label{th-perelman}
 Let $(M,g)$ be a compact Riemannian manifold with strictly convex boundary $\partial M \not=\emptyset$ and satisfying $\ric_{M}>0$. Then, there exists a complete Riemannian extension $(N,h)$ of $(M,g)$ without boundary and satisfying $\ric_{N} >0$.
\end{theoremA}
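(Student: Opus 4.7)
The plan is to combine the local extension result already established with a doubling construction and Perelman's smoothing theorem for Ricci-positive metrics across convex boundaries.

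First, I would invoke Corollary \ref{corollary-localcurvature} to enlarge $(M,g)$ slightly past $\partial M$. Since $\ric_M > 0$ is an open condition and $\partial M$ is compact and strictly (second-fundamental-form) convex, this corollary produces a Riemannian extension $(M',g')$ of $(M,g)$ whose compact boundary $\partial M'$ is itself a strictly convex hypersurface in $M'$ and for which $\ric_{M'} > 0$ still holds. The point of this preliminary enlargement is that after the gluing and smoothing below, the perturbed region will sit inside $M' \setminus M$, so that $(M,g)$ itself will be preserved isometrically.

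Next, I would form the topological double $N := M' \cup_{\mathrm{id}_{\partial M'}} M'$. This carries a natural $C^{0}$ Riemannian metric which is Lipschitz but in general not $C^{1}$ across the gluing locus $\partial M' \subset N$. At this stage I would quote the result of Perelman (as formulated in \cite{Per, Wan}): because the two copies of $\partial M'$ are being identified and the boundary is strictly convex from each side with respect to the outward normal, one can smooth the glued metric in an arbitrarily small tubular neighborhood of $\partial M' \subset N$, producing a genuine smooth Riemannian metric $h$ on $N$ with $\ric_h > 0$ everywhere. By choosing the smoothing neighborhood to be contained in the ``new'' collar $M' \setminus M$ on each copy (which is possible since $\partial M \subset \mathrm{int}(M')$), the metric $h$ restricted to one selected copy of $M$ coincides with $g$, so $(N,h)$ is indeed a Riemannian extension of $(M,g)$.

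Finally, I would check geodesic completeness. Since $\partial M'$ is compact and $M'$ is already metrically complete (being a small enlargement of the compact $M$), each copy of $M'$ inside $N$ is itself metrically complete; hence their union $N$ is complete, and $N$ has no boundary by construction. This yields the desired $(N,h)$.

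The main obstacle is purely a black-box issue: the delicate step is Perelman's smoothing, which must preserve the strict lower Ricci bound across the gluing. Everything else — the initial local extension, the doubling, and the completeness verification — is soft, and the convexity hypothesis on $\partial M$ is precisely what makes Perelman's technique applicable.
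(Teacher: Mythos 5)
Your argument reproduces the paper's proof essentially verbatim: enlarge slightly via Corollary \ref{corollary-localcurvature} to get $(M',g')$ with $\ric_{M'}>0$ and compact strictly convex boundary, form the double $N = M' \cup_{\mathrm{id}_{\partial M'}} M'$, invoke Perelman's smoothing result to get $\ric_h>0$ while keeping the perturbed region inside $M'\setminus M$, and conclude completeness from compactness of $\partial M'$. This is precisely the route taken in the paper, so the proposal is correct.
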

\begin{remark}
 {\rm
A direct application of Theorem \ref{th-perelman} yields a Bonnet-Myers type result for complete manifolds with compact convex boundary and Ricci curvature $\ric \geq k >0$. Actually, using the second variation formula, it is proved in \cite {Li} and \cite{Ge} that a sharp diameter bound can be obtained when $\ric \geq k$, $k \in \rr$, under the assumption that $\partial M$ is compact and strictly mean convex. This is a very  interesting boundary effect.
 }
\end{remark}
\subsection{Existence of complete extensions with $\operatorname{Scal} > 0$}\label{section_GromovLawson}

Since the scalar curvature contains less information than the Ricci or the sectional curvatures, it is natural to expect that Riemannian extensions of manifolds with boundary preserving the positivity of the scalar curvature can be guaranteed under weaker assumptions. In particular, the full strict convexity of $\pM$ is not necessary, as shown by the following result, due to Gromov and Lawson; see  \cite[Theorem 5.7 and Remark 5.8]{GL}.
\begin{theoremA}
 Let $(M,g)$ be a compact Riemannian manifold with boundary satisfying $\operatorname{Scal}_{g}>0$. Suppose that $\pM$ is (strictly) mean-convex, i.e. its mean curvature satisfies $H:=\operatorname{tr}\mathrm{II}<0$ at each point of $\partial M$.
Then, there exists a complete Riemannian extension $(N,h)$ of $(M,g)$ without boundary and satisfying $\operatorname{Scal}_{g}>0$.
\end{theoremA}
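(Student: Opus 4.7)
The strategy is to first push the metric slightly past $\partial M$, then ``bend'' it in a collar to convert the strictly mean-convex boundary into a totally geodesic one while preserving $\operatorname{Scal}>0$, then double across the new boundary, and finally mollify the resulting $C^{1,1}$ metric. Since the output will be compact without boundary, geodesic completeness is automatic. Concretely, by Corollary \ref{corollary-localcurvature} applied to the open condition $\operatorname{Scal}>0$, we first extend $(M,g)$ to a Riemannian manifold $(\widehat M,\widehat g)$ with $M\subsetneq \widehat M$, $\operatorname{Scal}_{\widehat g}>0$ on a neighborhood of $M$, and with the new boundary $\partial \widehat M$ still strictly mean-convex. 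Within $\widehat M\setminus\inte M$ we work in Fermi coordinates $(x,t)\in\partial M\times[0,\delta]$, where $\widehat g=dt^{2}+g_{t}(x)$ and $H_{\widehat g}|_{t=0}<0$.

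The heart of the proof is the Gromov--Lawson bending: replace $\widehat g$ on $\partial M\times[0,\delta]$ by a metric of the form $g_{*}=ds^{2}+g_{\tau(s)}(x)$, where $\tau$ is a smooth profile parametrizing a curve in the $(s,t)$-plane that starts vertical and becomes horizontal. A direct computation using the Riccati equation for the shape operator of the level hypersurfaces $\{t=\tau(s)\}$ expresses $\operatorname{Scal}_{g_{*}}$ in terms of $\operatorname{Scal}_{\widehat g}|_{t=\tau(s)}$ plus correction terms involving $\tau'$, $\tau''$, and the mean curvature of the parallel slices. Strict mean-convexity at $t=0$ is precisely what permits the choice of a smooth $\tau\colon[0,s_{0}]\to[0,\delta']$ with $\tau(0)=0$, $\tau'(0)=1$, $\tau(s_{0})=\delta'$, $\tau'(s_{0})=0$, and $\tau''\leq 0$ small, so that the bent metric $g_{*}$ maintains $\operatorname{Scal}>0$ throughout and the outer boundary $\{s=s_{0}\}$ is totally geodesic. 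Gluing $g$ on $M$ to $g_{*}$ across $\partial M$ (the matching is smooth by construction since $\tau'(0)=1$ and $\tau(0)=0$) produces a compact Riemannian extension $(M',g')$ of $(M,g)$ with $\operatorname{Scal}_{g'}>0$ and totally geodesic boundary.

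Finally, form the double $N:=M'\cup_{\partial M'}M'$ equipped with the reflected metric $g_{N}$. Continuity across the doubling hypersurface $\Sigma$ is immediate, and the totally geodesic condition forces the tangential and normal first derivatives of $g_{N}$ to match across $\Sigma$, so $g_{N}$ is globally $C^{1,1}$. A Miao-type mollification, supported in an arbitrarily thin bicollar of $\Sigma\subset N$ disjoint from the embedded copy of $M$, smoothens $g_{N}$ to a genuine $C^{\infty}$ metric $h$ whose distributional scalar curvature converges to the original in a strong norm; since the unperturbed scalar curvature is bounded below by a positive constant away from $\Sigma$ and the distributional jump across $\Sigma$ vanishes (again by the totally geodesic condition, which forces the mean curvatures of the two sides to cancel), the mollification can be arranged to keep $\operatorname{Scal}_{h}>0$ pointwise. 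Then $(N,h)$ is compact, smooth, boundaryless (hence complete), contains $M$ isometrically, and has $\operatorname{Scal}_{h}>0$. The main obstacle is the bending step: explicitly engineering $\tau$ so that the interplay between the Gaussian curvature of the bending curve and the Riccati evolution of the slices delivers both totally geodesic outer boundary and strict positivity of $\operatorname{Scal}_{g_{*}}$; strict mean-convexity $H<0$ is precisely the hypothesis that makes this trade-off feasible.
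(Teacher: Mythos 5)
Your plan is essentially the Gromov--Lawson bending construction that the paper cites (Theorem 5.7 and Remark 5.8 of \cite{GL}): your bent metric $g_*=ds^2+g_{\tau(s)}(x)$ is exactly the induced metric on the boundary of a thin tube around $M\times\{0\}$ inside $\widehat M\times\mathbb R$, once the bending curve in the $(t,r)$-plane is parametrized by arclength with $\tau=\tau_1$; doubling across the totally geodesic slice $\{s=s_0\}$ amounts to traversing the remaining half-turn of the tube. So the route is the same as the one the paper points to.

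There is, however, a genuine error at the decisive step: the claim that strict mean-convexity permits a choice with $\tau''\leq 0$ \emph{small}. The opposite is true. Write the bending curve via its turning angle $\theta(s)$, so $\tau'=\cos\theta$, $\tau''=-\theta'\sin\theta$ and the geodesic curvature of the profile is $k=\theta'\ge0$. With $A_t$ the shape operator of the Fermi slices of $\widehat g$, $H_t=\operatorname{tr}A_t$, and the contracted Riccati equation $\partial_t H_t=-|A_t|^2-\operatorname{Ric}_{\widehat g}(\partial_t,\partial_t)$, a direct computation gives
\[
\operatorname{Scal}_{g_*}(x,s)-\operatorname{Scal}_{\widehat g}(x,\tau(s))
=\sin^2\theta\Big(H_{\tau(s)}^2-|A_{\tau(s)}|^2-2\operatorname{Ric}_{\widehat g}(\partial_t,\partial_t)\Big)
+2k\,\sin\theta\,H_{\tau(s)} .
\]
Strict mean-convexity (in the paper's convention $\operatorname{tr}\mathrm{II}<0$, equivalently $H_t>0$ with respect to the Fermi normal $\partial_t$) makes the last term positive, but the bracketed term has no definite sign in general. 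To make the correction nonnegative you therefore need the geodesic curvature $k$ of the bending profile to be \emph{large} (in the tube picture $k\approx1/\epsilon$ and one takes $\epsilon$ small). Moreover, $\tau'(0)=1$, $\tau'(s_0)=0$ and $\tau''$ uniformly small would force $s_0$, and hence $\delta'=\tau(s_0)=\int_0^{s_0}\cos\theta$, to be large, carrying you outside the Fermi collar on which $\operatorname{Scal}_{\widehat g}>0$ and $H_t>0$ are guaranteed. So strict mean-convexity is precisely the hypothesis that makes a \emph{rapid} bend innocuous. A smaller remark: the Miao-type mollification is avoidable by requiring $\tau(s)=s$ for $s$ near $0$ and all odd derivatives of $\tau$ to vanish at $s_0$; then both the gluing with $M$ at $\partial M$ and the reflection across $\Sigma$ are already $C^\infty$, which is what \cite{GL} arrange by choosing a smooth profile curve from the outset.
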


As above the extension is constructed on the (differential) double of $M$. The idea of the proof is to consider a tubular $\epsilon$-neighborood of $M$ in $M'\times (-1,1)$, where $M'$ is a local extension of $M$. Using the mean-convexity of the boundary, Gromov and Lawson proved that the (smoothed) surface of this $\epsilon$-neighborhood, endowed with its natural hypersurface metric, still has positive scalar curvature when $\epsilon$ is small enough.
 
\subsection{Existence of complete extensions with $\sect < C^{2}$}

In this section, using a suitable conformal deformation of a local extension, we prove that manifolds with a a compact convex boundary and a nonnegative upper bound of the sectional curvature admit extensions satisfying the same  curvature bound.

\begin{theoremA}\label{th_exis_sect}
Let $(M,g)$ be an $m$-dimensional complete Riemannian manifold with smooth compact boundary $\pM \not=\emptyset$. Assume that $\sect_g< C$ on $M$, for some constant $C>0$, and that $\partial M$ is strictly convex. Then $(M,g)$ has a complete Riemannian extension $(M',g')$ without boundary satisfying $\sect_{g'}<0$. 
\end{theoremA}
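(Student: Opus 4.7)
The plan is to combine a local curvature-preserving extension with a gluing to a warped-product cap of strongly negative sectional curvature, following the partition-of-unity strategy already used in the proof of Lemma \ref{lem_convexity}. The decisive geometric input is the strict convexity of $\partial M$.

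First, I would apply Corollary \ref{corollary-localcurvature} to produce a Riemannian extension $(\hat M, \hat g)$ of $(M,g)$ such that $\sect_{\hat g} < C$ on $\hat M$ and $\partial M$ is still a compact strictly convex two-sided hypersurface of $\hat M$. In a collar neighborhood $\partial M \times [0,\varepsilon)$ of $\partial M$ in $\hat M \setminus M$, Fermi coordinates give $\hat g = ds^2 + h_s$, with $h_0 = g_{\partial M}$ and $-2\,\mathrm{II} = \partial_s h_s|_{s=0}$ strictly positive definite on $T\partial M$ by strict convexity.

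Next, inside this collar I would interpolate via a partition of unity. Pick smooth cutoffs $\phi_h, \phi_j$ on $[0,+\infty)$ with $\phi_h + \phi_j \equiv 1$, $\phi_h \equiv 1$ on $[0,\varepsilon/4]$ and $\phi_h \equiv 0$ on $[\varepsilon/2, +\infty)$, and a warped-product factor $j^{(k)}(s,x) = f_k(s)^2 g_{\partial M}(x)$, with $f_k(s) = A_k \cosh(ks) + B_k \sinh(ks)$; the constants $A_k, B_k$ are tuned so that $f_k(0) = 1$ and the first $s$-derivative of $j^{(k)}$ at $s=0$ matches $\partial_s h_s|_{s=0}$ as closely as allowed (after rescaling $g_{\partial M}$, if $\partial_s h_s|_{s=0}$ fails to be proportional to $g_{\partial M}$). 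Set
\[
g' := \begin{cases} g & \text{on } M,\\[2pt] ds^2 + \phi_h(s)\,h_s + \phi_j(s)\,j^{(k)}(s,\cdot) & \text{on } \partial M \times [0, +\infty).\end{cases}
\]
For $k \gg 1$, the pure warped product $ds^2 + f_k(s)^2 g_{\partial M}$ has sectional curvatures uniformly close to $-k^2$, and the cap $\partial M \times [\varepsilon/2, +\infty)$ carrying this metric is geodesically complete. In the transition region $s \in [\varepsilon/4, \varepsilon/2]$, the $C^2$-closeness of $h_s$ to $j^{(k)}(s,\cdot)$, combined with the uniform bound $\sect_{\hat g} < C$, should yield a negative upper curvature bound throughout the gluing window.

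The main technical obstacle is precisely the sectional curvature estimate in the transition region: $\sect$ depends nonlinearly on $h_s$ and on its first two $s$-derivatives, so a careless convex combination could manufacture positive curvature. What makes the argument go through is the strict positivity of $\partial_s h_s|_{s=0}$, which is the open condition equivalent to strict convexity of $\partial M$; this is exactly what permits a clean first-order match between the two families at $s = 0$, keeping $\phi_h h_s + \phi_j j^{(k)}$ a small $C^2$-perturbation of the warped-product form throughout $[\varepsilon/4, \varepsilon/2]$. Once this is established, attaching the cap to $M$ through the interpolation window produces the desired complete Riemannian extension $(M', g')$ without boundary and with the sectional curvature upper bound asserted in the statement.
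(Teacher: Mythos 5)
Your approach diverges fundamentally from the paper's. The paper does not perform a two-piece gluing with a warped-product cap and a partition of unity. Instead it first uses (the proof of) Lemma~\ref{lem_convexity} to obtain a half-cylindrical local extension $(M',h)$, with $M'\setminus M \cong (0,s_*)\times\partial M$ and $h=ds^2+h_s$, on which the signed distance $s$ from $\partial M$ is smooth and has $\Hess\,s>0$ off the radial direction; it then applies a \emph{conformal deformation} $g'=e^{2\phi}h$ with $\phi=\varphi\circ s$, where $\varphi$ is convex, nondecreasing, and blows up at $s_*$ with $e^{\varphi}\notin L^1$. The conformal-change formula for the Riemann tensor produces additional terms $\mathcal{A}_h(X,Y)$ that are strictly negative for non-radial $2$-planes precisely because $\Hess\,\phi=\varphi''\,ds\otimes ds+\varphi'\,Dds\ge 0$ (and $>0$ in the relevant directions), and the divergence of $\int e^{\varphi}$ forces geodesic completeness. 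Strict convexity of $\partial M$ enters once, to guarantee the strict convexity of $s$ after the local extension; after that there is no matching problem to solve.

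Your interpolation step contains a genuine gap. You assert that $C^2$-closeness of $h_s$ to $j^{(k)}$, combined with $\sect_{\hat g}<C$, yields a negative curvature bound on $[\varepsilon/4,\varepsilon/2]$. But at the inner end of the window, where $\phi_h\approx 1$, the blended metric is $C^2$-close to $\hat g=ds^2+h_s$, so its sectional curvatures are close to those of $\hat g$, which can be anywhere up to $C>0$: there is no mechanism to push them below zero. Moreover, the claimed first-order match at $s=0$ already fails in general. With $j^{(k)}=f_k^2\,g_{\partial M}$ one has $\partial_s j^{(k)}|_{s=0}=2f_k(0)f_k'(0)\,g_{\partial M}$, a conformal multiple of $g_{\partial M}$, whereas $\partial_s h_s|_{s=0}=-2\,\mathrm{II}$ is only known to be positive definite — it need not be conformal to $g_{\partial M}$, and a scalar rescaling cannot fix a direction-dependent mismatch, so the parenthetical remark does not resolve this. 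Finally, even with a forced first-order match, $\partial_s^2 h_s|_{s=0}$ is dictated by the ambient curvature of $\hat g$ via the Riccati equation, while the warped product's second jet is fixed by its own ODE; the resulting second-order discrepancy enters the curvature of $ds^2+\phi_h h_s+\phi_j j^{(k)}$ nonlinearly, together with terms involving $\phi_h',\phi_h''$, and you give no argument that the total stays negative. The paper's conformal route avoids all three of these issues because it never needs the two metrics to agree: it produces negativity from the convexity of $\phi$, not from matching.
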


\begin{proof}
By (the proof of) Lemma \ref{lem_convexity}, there exists a Riemannian extension $(M',h)$ of $(M,g)$  satisfying the following conditions:
\begin{itemize}
\item [(a)] $M'\setminus M = (0,s_{\ast})\times \pM$ and the metric $h$ on $M'\setminus M$ writes 
\begin{equation}\label{eq_decomp_h}
h(s,x)=ds^2+h_s(x),\end{equation}
$h_s$ being a metric on $\pM$ which varies with $s$.
\item [(b)] The signed distance function $s:  M'\to\R$ from $\pM$ is smooth and strictly convex, except for the radial direction.
\item [(c)] $\sect_{M'} <0$
\end{itemize}
Let $\varphi: (-\infty,s_{\ast}) \to \rr_{\geq 0}$ be a smooth function such that 
\begin{itemize}
\item [(i)] $\varphi=0$ on $(-\infty,0]$;
 \item [(ii)] $\varphi'(t) \geq 0$, $\varphi''(t) \geq 0$;
 \item [(iii)] $\varphi(t)\to +\infty$ as $t\to s_{\ast}$ and $e^\varphi\not\in L^1([0,s_\ast))$.
\end{itemize}
For instance we could choose $\varphi=e^{\tan(\pi(s/s_\ast-1/2))}$ on $(0,s_\ast)$.\\
We define the smooth function $\phi:M'\to\R$ by
\[
\phi(x)=\begin{cases} 0,&\text{if }x\in M,\\ \varphi(s(x)),&\text{if }x\in M'\setminus M.\end{cases}
\]
and we consider the corresponding conformally deformed metric on $M'$:
\[
g'=e^{2\phi}h.
\]
We claim that $(M',g')$ is complete. For integers $n\geq 2$, set $t_n=s_\ast(1-\tfrac1n)$ and consider the smooth, relatively compact exhaustion $\{M'_{n}\}$ of $M'$ given by
\[
M'_{n} = \{x \in M':s(x) < t_{n}\}.
\]
By \eqref{eq_decomp_h},
\[
\mathrm{dist}_{h}(\partial M'_{n} , \partial M'_{n+1}) = t_{n+1} - t_{n},
\]
so that
\[
\mathrm{dist}_{g'}(\partial M'_{n} , \partial M'_{n+1}) > e^{\varphi(t_{n})}(t_{n+1}-t_n) \geq \frac 13e^{\varphi(t_{n})}(t_{n}-t_{n-1})>\frac 13\int_{t_{n-1}}^{t_n}e^{\varphi(t)}dt.
\]
We conclude that, for any divergent path $\gamma:[0,1) \to M'$,
\[
\ell_{g'}(\gamma) \geq \sum \mathrm{dist}_{g'}(\partial M'_{n} , \partial M'_{n+1}) = +\infty,
\]
so that $(M',g')$ is complete (see also Theorem \ref{th_HR}).

Now, we recall the transformation law of the Riemann tensor under conformal changes of the metric, \cite{Be}:
\begin{align*}
R_{g'}(X,Y)Z&=R_h(X,Y)Z+h(X,Z)\nabla_Y\nabla\phi-h(Y,Z)\nabla_X\nabla\phi\\
&+h(Z,\nabla_X\nabla\phi)Y-h(Z,\nabla_Y\nabla\phi)X-(Z\phi)[(X\phi)Y-(Y\phi)X]\\ &+[(X\phi)h(Y,Z)-(Y\phi)h(X,Z)]\nabla\phi \\
&+ h(\nabla\phi,\nabla\phi)(h(X,Z)Y-h(Y,Z)X),
\end{align*}
which implies
\begin{equation}\label{sect-conf}
\sect_{g'}(X\wedge Y) =  e^{-2\phi} \sect_{h}(X \wedge Y) + e^{-2\phi}|X\wedge Y|_{h}^{-2} \mathcal{A}_{h}(X,Y),
\end{equation}
where
\begin{align*} 
\mathcal{A}_{h}(X,Y) &= 2h(X,Y)\Hess \phi(X,Y)\\
&- |Y|^2\Hess\phi(X,X)-|X|^2\Hess\phi(Y,Y)\\
&+|(X\phi)Y-(Y\phi)X|^2-\left(|X|^2|Y|^2-g(X,Y)^2\right)|\nabla\phi|^2.
\end{align*}
Here, $\nabla,\Hess$ and $|\cdot|$ are computed with respect to $h$. Take any point $x\in M'\setminus M$, consider the vector field $W_1=\nabla s$ at $x$, and chose a local frame $\{W_k\}_{k=1}^m$ such that at $x$
\[h(W_i,W_j)=\delta_{ij}\]
Note that $\nabla\phi(x)=\varphi'(s(x))\nabla s(x)$, so that $W_k\phi=0$ for all $k\geq 2$. Moreover
\[
\Hess \phi = \varphi''(s)ds\otimes ds + \varphi'(s)Dd s \geq 0
\]
in the sense of quadratic forms. Accordingly we compute, at $x$,
\begin{align*}
\mathcal{A}_{h}(W_{j},W_{k})=&
2h(W_j,W_k)\Hess \phi(W_j,W_k)\\
-& |W_k|^2\Hess\phi(W_j,W_j)-|W_j|^2\Hess\phi(W_k,W_k)\\
+&|(W_j\phi)W_k-(W_k\phi)W_j|^2-\left(|W_j|^2|W_k|^2-g(W_j,W_k)^2\right)|\nabla\phi|^2\\
<&- \Hess\phi(W_j,W_j)-\Hess\phi(W_k,W_k)\leq 0,
\end{align*}
if $2\leq j < k \leq m$.
Similarly,
\begin{align*}
\mathcal{A}_{h}(\nabla s,W_{k}) =&
2h(\nabla s,W_k)\Hess \phi(\nabla s,W_k)\\
-& |W_k|^2\Hess\phi(\nabla s,\nabla s)-|\nabla s|^2\Hess\phi(W_k,W_k)\\
+&|((\nabla s)\phi)W_k-(W_k\phi)\nabla s|^2-\left(|\nabla s|^2|W_k|^2-g(\nabla s,W_k)^2\right)|\nabla\phi|^2\\
<&h(\nabla s,\nabla\phi)^2 - |\nabla\phi|^2=0
\end{align*}
for all $k=2,\dots,m$. Using these inequalities into \eqref{sect-conf} completes the proof.
\end{proof}

\begin{remark}
 {\rm
 Observe that, actually,
 \[
 \limsup_{x \to \infty\\ \text{ in }M' \setminus M } \sect_{g'} \leq 0.
 \]
Observe also that $M' \setminus M$ has the diffeomorphic type of the cylinder $\partial M' \times(0,+\infty)$. This latter condition, as well as the convexity of the boundary, is intrinsically needed for the construction to work. One may wonder if both the convexity of the boundary and the constraint on the topology of the extended part in Theorem \ref{th_exis_sect} are really needed in order to get a complete extension with $\sect < C$, $C>0$. Some indications are contained in the next section.
 }
\end{remark}

\section{Existence of complete extensions with decaying $\sect$ and general topology of the extended part}
A well known existence result by R. Greene, \cite{Gre}, shows that on every noncompact differentiable manifold, regardless of its topological type, there exists a complete Riemannian metric whose sectional curvature and its derivatives decays to zero at infinity. We are going to use this result to show that a complete manifold with compact boundary can be extended by adding a manifold with (possibly) general topology along which the curvature tensor (and its derivatives) decay to $0$. To begin with, we record the original statement by Greene.

\begin{theorem}[Theorem 1 in \cite{Gre}]\label{Green}
If $\{K_i\}_{i=1}^\infty$ is a  relatively compact exhaustion of a noncompact manifold $N$, $\{s_i\}_{i=1}^\infty$ is a sequence of nonnegative integers and  $\{\e_i\}_{i=1}^\infty$ a sequence of positive numbers, then there is a complete Riemannian metric $h$ on $N$ whose Riemann tensor $R_h$ satisfies
\[\|D^sR_h\|_h(p)\leq \e_i\]
for all nonnegative integers $s\leq s_i$ and for all $p\in \bar K_i\setminus K_{i-1}$, $i> 1$.
\end{theorem}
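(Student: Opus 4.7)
The plan is to construct $h$ as a conformal deformation $h = e^{2f} g_0$ of an arbitrary complete background metric $g_0$ on $N$, where $f \colon N \to [0,+\infty)$ blows up rapidly enough along the exhaustion. A complete $g_0$ exists on any paracompact smooth manifold, and completeness of $h$ will then come for free: since $f \geq 0$, one has $h \geq g_0$ as bilinear forms, hence $d_h \geq d_{g_0}$, so $g_0$-completeness transfers to $h$.

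To build $f$, fix cutoffs $\chi_i \in C_c^\infty(N)$ with $\chi_i \equiv 1$ on a neighborhood of $\bar K_i \setminus K_{i-1}$ and $\supp\chi_i \subset K_{i+1} \setminus \bar K_{i-2}$, and set
\[
 f = \sum_{i=1}^\infty F_i\, \chi_i,
\]
for positive constants $F_i$ tending to $+\infty$ that we are going to choose. Local finiteness of $\{\supp\chi_i\}$ makes $f$ smooth, and on each annulus $\bar K_i \setminus K_{i-1}$ one has $f \geq F_i$. Moreover, all $g_0$-covariant derivatives $\nabla^k f$ on that annulus are bounded by constants that depend polynomially on $F_{i-1}, F_i, F_{i+1}$ and on the (fixed) geometry of the $\chi_j$'s relative to $g_0$.

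Using the conformal transformation laws for the Riemann tensor and its iterated covariant derivatives, a straightforward induction based on $\tilde\nabla = \nabla^{g_0} + A(\nabla f)$ (with $A$ the standard tensorial correction depending linearly on $\nabla f$) together with the Leibniz rule yields
\[
 \| D^s R_h \|_h(p) \;\leq\; e^{-(2+s)\, f(p)}\; Q_s\!\left(\|R_{g_0}\|_{g_0}(p),\ldots,\|\nabla^s R_{g_0}\|_{g_0}(p),\; |\nabla f|_{g_0}(p),\ldots,|\nabla^{s+2} f|_{g_0}(p)\right),
\]
for some universal polynomial $Q_s$. The exponential factor comes from the fact that $D^s R_h$ is a $(0,4{+}s)$-tensor so its $h$-norm picks up $e^{-(4+s)f}$, while $R_h$ itself, computed from $R_{g_0}$ via the conformal formula, carries an $e^{2f}$-prefactor. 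Restricting to $p \in \bar K_i \setminus K_{i-1}$ and inserting the bounds above, this becomes, for every $s \leq s_i$,
\[
 \|D^s R_h\|_h(p) \;\leq\; e^{-2 F_i}\; \widetilde Q_{s,i}(F_{i-1}, F_i, F_{i+1}),
\]
with $\widetilde Q_{s,i}$ polynomial in its arguments (the coefficients depending only on $s_i$ and on the $C^{s_i+2}$-norms of $R_{g_0}$ and of the $\chi_j$'s on the compact piece $\bar K_{i+1}\setminus K_{i-2}$).

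To conclude, choose the $F_i$ inductively so that $\widetilde Q_{s,i}(F_{i-1}, F_i, F_{i+1}) \leq \varepsilon_i\, e^{2 F_i}$ for every $s \leq s_i$. This is possible since exponential beats polynomial: fix $F_1$ arbitrarily large and, given $F_1,\ldots,F_i$, pick $F_{i+1}$ so large that the $i$-th annular constraint (in which $F_{i+1}$ enters only polynomially on the left and does not appear on the right) is satisfied and, simultaneously, enough slack remains for the $(i{+}1)$-st step to be feasible in turn. The main technical obstacle lies precisely in this coupled inductive bookkeeping and in writing down the exact form of $Q_s$; both are routine but tedious, and explain why the argument is carried out in a single conformal step rather than by iterated local modifications.
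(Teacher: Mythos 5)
You take a genuinely different route from Greene's. Greene first uses a partition of unity to make the background metric a product metric $ds^2 + g_{\partial K_i}$ on bicollar neighborhoods of each $\partial K_i$, then multiplies the metric by a large constant $\lambda_i^2$ on each annulus $\bar K_i \setminus K_{i-1}$ (scaling kills curvature), and finally smooths the transitions radially near each $\partial K_i$, exploiting the product structure to keep the interpolation error controlled. The crucial feature of that construction is that the stretching constants can be chosen \emph{greedily}: the smoothing step produces bounded curvature contributions regardless of the neighboring constants, so there is no forward dependence in the induction. Your conformal deformation $h = e^{2f}g_0$ is more global: the bound you write down correctly has the form $\|D^s R_h\|_h \leq e^{-(2+s)f}\, Q_s$, and the completeness argument ($h \geq g_0$ since $f \geq 0$) is fine.

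However, the inductive step as you state it contains a genuine error, and it is exactly at the crux of the argument. In the constraint $\widetilde Q_{s,i}(F_{i-1},F_i,F_{i+1}) \leq \varepsilon_i e^{2F_i}$, the parameter $F_{i+1}$ appears only on the left and only helps you \emph{violate} the inequality: making $F_{i+1}$ larger makes the $i$-th constraint \emph{harder}, not easier. So ``pick $F_{i+1}$ so large that the $i$-th annular constraint is satisfied'' is backwards. What actually happens is that constraint $i$ forces $F_{i+1} \leq G_i(F_i)$ for an explicit (roughly exponential) $G_i$ depending on $\varepsilon_i$, $s_i$, and the $C^{s_i+2}$-data of $g_0$ and the $\chi_j$'s on the $i$-th compact piece, while the need to satisfy constraint $i+1$ in turn forces $F_{i+1}$ to be \emph{bounded below} (roughly by $\log(1/\varepsilon_{i+1})$ plus a term depending on $F_{i+2}$). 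You therefore have to show that the forward-coupled two-sided system $L_{i+1} \leq F_{i+1} \leq G_i(F_i)$ can be solved for a sequence $F_i \to \infty$; this is not automatic just because ``exponential beats polynomial'', since both $\varepsilon_i \to 0$ and $s_i \to \infty$ are allowed and enter the lower bounds $L_{i+1}$ in an uncontrolled way. Greene's local construction sidesteps this precisely because his stretching constants have no such upper constraint. To rescue your scheme you need to actually exhibit a consistent sequence (for instance, by first extracting from the data a ``minimal admissible level'' $L_i$, then verifying that the iterated upper bounds $G_1 \circ \cdots \circ G_{i-1}(F_1)$ eventually dominate the $L_i$ for $F_1$ large, using that each $G_j^{-1}$ is logarithmic) — and you should be prepared for the possibility that the fixed cutoffs $\chi_i$ have to be chosen depending on $\{\varepsilon_i\}$ and $\{s_i\}$, rather than once and for all at the start, to make the constants $C_i$ cooperate.
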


To prove this result, without loss of generality, it is assumed that each $\partial K_{i}$ is a smooth, compact hypersurfaces so that it has a bicollar neighborhood $\mathcal{W}_{i} \subset N$ of the form  $\partial K_i \times (-1/4,1/4)$. The manifold $N$ is endowed with a Riemannian metric $h_0$ which is the product metric on  $\partial K_i \times [-1/8,1/8] \subset \mathcal{W}_{i}$. This can be done using a partition of unity argument that leave unchanged the original metric on any given domain $\Omega \Subset K_{1}$ but can introduce a large amount of curvature on $K_{1} \setminus \Omega$ and $K_{i} \setminus K_{i-1}$, $i \geq 2$. Then, it is performed a stretching of $h_0$ on each $\bar K_i \setminus K_{i-1}$ with a constant large enough depending on $i$ so to obtain a Riemannian metric $h_1$ with small curvature away from $\partial K_i$.  Finally, in product coordinates of $\mathcal{W}_{i}$, $h_1$ is deformed along the ``radial direction'' in a neighborhood of $\partial K_i$. All of these deformations are performed by keeping the distance between $\partial K_{i+2}$ and $\partial K_{i}$ at most $1$, for every $i$. Whence the completeness of the final metric follows trivially.

We point out that, once the metric on the domain $\Omega$ is prescribed, the above construction can be carry out without changes in the weaker assumption that  $K_{1} \setminus \Omega$  and $\bar K_{i} \setminus K_{i-1}$ are compact, $i \geq 2$.\smallskip

In view of the above arguments, joint with the fact that a Riemannian metric $g$ on a smooth domain $\Omega \subset N$ can be always extended slighlty past  $\partial \Omega$ in $N$, the validity of following extension result is now clear.
\begin{theoremA}
Let $(M,g)$ be an $n$-dimensional complete Riemannian manifold with smooth compact boundary $\pM\not= \emptyset$. Let $Q$ be any smooth manifold with boundary $\partial Q \not=\emptyset$ diffeomorphic to $\partial M$ via a selected map $\eta : \partial M \to \partial Q$ and define the smooth manifold $M' = M \cup_{\eta} Q$.
Then, there exists a complete Riemannian metric $g'$ on $M'$ such that $(M',g')$ is a Riemannian extension of $(M,g)$ and, for every $s\in\N$,
\[
\lim_{x \to \infty \text{ in } M'\setminus M}\|D^sR_{g'}\|_{g'}(p)\to 0.
\]

\end{theoremA}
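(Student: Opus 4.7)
The plan is to combine a relative form of Greene's Theorem \ref{Green} with the elementary observation, recalled by the authors, that a Riemannian metric can always be extended smoothly past a smooth boundary. Since $\pM$ is compact, the topological gluing $M' = M \cup_\eta Q$ carries a smooth structure in which $\pM$ is a smoothly embedded hypersurface (this is just the collar-based construction used at the beginning of the proof of Theorem \ref{theorem-extension}). I would then invoke Step 1 in the proof of Lemma \ref{lemma-extension-1} across $\pM$ to produce a smooth open domain $\Omega \subset M'$ containing $M$ together with a thin external collar $\pM \times [0,\delta) \subset M'\setminus M$, equipped with a smooth metric $g_\Omega$ satisfying $g_\Omega|_M = g$.

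Next, I would choose an exhaustion $\{K_i\}_{i\geq 1}$ of $M'$ by open sets with smooth compact hypersurface boundaries $\partial K_i$, with $\bar\Omega \subset \inte K_1$ and such that $\bar K_1 \setminus \Omega$ and all the annuli $\bar K_i \setminus K_{i-1}$ ($i \geq 2$) are compact; this is possible thanks to the compactness of $\pM$. Fixing sequences $s_i \nearrow \infty$ and $\e_i \searrow 0$, I would then carry out the three deformation steps of Greene's proof -- a product-type background metric in a collar of each $\partial K_i$, stretching of each annulus by a sufficiently large constant, and radial smoothing across each $\partial K_i$ -- while keeping $g_\Omega$ unchanged on $\Omega$. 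This produces a smooth metric $g'$ on $M'$ with $g'|_\Omega = g_\Omega$ (in particular $g'|_M = g$) and
\[
\|D^s R_{g'}\|_{g'}(p) \leq \e_i \qquad \text{for every } p \in \bar K_i \setminus K_{i-1} \text{ and every } s \leq s_i.
\]
Completeness of $(M',g')$ then follows from Theorem \ref{th_HR}: the stretching step can be tuned so that the $g'$-distance between $\partial K_i$ and $\partial K_{i+2}$ is uniformly bounded below by a positive constant, forcing every divergent path in $M' \setminus \Omega$ to have infinite $g'$-length, while on $\Omega \supset M$ the metric $g'$ restricts to the already-complete $g$. For each fixed $s \in \N$, since $s_i \to \infty$, the bound above gives $\|D^s R_{g'}\|_{g'} \leq \e_i$ on $\bar K_i \setminus K_{i-1}$ for all sufficiently large $i$, which yields the required decay at infinity in $M' \setminus M$.

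The main obstacle is exactly the relative form of Greene's construction: one must verify that his three-stage deformation can be organised so as to leave a prescribed metric on the seed region $\Omega$ untouched. The authors have flagged this as the only nontrivial point and have explained that it works as soon as $\bar K_1 \setminus \Omega$ and the higher annuli are compact, which is automatic here because $\pM$ is compact; from there Greene's original argument carries over verbatim, with $g_\Omega$ on $\Omega$ playing the role of his arbitrary background metric on $K_1$.
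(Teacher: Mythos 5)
Your proof is correct and follows essentially the same route as the paper: glue along a collar, extend $g$ slightly past the compact $\partial M$ to a metric on an open seed domain $\Omega$, and then run Greene's three-stage construction relative to $\Omega$, using the observation that only compactness of $\bar K_1\setminus\Omega$ and of the annuli $\bar K_i\setminus K_{i-1}$ is actually needed. Your phrasing of the completeness step (annular distances bounded \emph{below}, so a divergent path crossing infinitely many $\partial K_i$ has infinite length) is in fact the accurate version of what the paper states somewhat loosely as keeping $\mathrm{dist}(\partial K_i,\partial K_{i+2})$ ``at most $1$.''
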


\section{Existence of complete extensions with $\ric < C$ or $\scal<C$}
Lohkamp proved in \cite{Loh-Annals2,Lohkamp-JDG} that every differentiable manifold with empty boundary admits, for any given $C\in\R$, a Riemannian metric $g$ with $\ric_g<C$.
In view of this remarkable result, one expects that  a lower Ricci curvature bound is less restrictive
than other curvature conditions even in the presence of a nontrivial boundary. It turns out that the method used by Lohkamp goes through local deformations of the metric, and can thus be adapted to lower the Ricci curvature of a given Riemannian metric outside a fixed domain.
This idea, together with Corollary \ref{th_extension}, permits in fact to prove the following

\begin{theoremA}\label{th_Lohkamp}
Let $(M,g)$ be an $n$-dimensional complete Riemannian manifold with smooth nonempty boundary $\pM$. Suppose that $\ric_g< C$, for some real constant $C$. Then $M$ admits an extension $(M',g')$ satisfying $\ric_g'<C$. 
\end{theoremA}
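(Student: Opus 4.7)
The plan is to combine the general extension Corollary \ref{th_extension} with a localized form of Lohkamp's construction of metrics with negative Ricci curvature. First, I would apply Corollary \ref{th_extension} to obtain a geodesically complete Riemannian extension $(N,g_N)$ of $(M,g)$ with $\partial N=\emptyset$. Since $g_N|_M=g$ and the hypothesis $\ric_g<C$ is a strict inequality, continuity of the Ricci tensor yields an open neighborhood $V\supset M$ in $N$ on which $\ric_{g_N}<C$. The task is thereby reduced to perturbing $g_N$ on $N\setminus V$ so that the resulting metric $g'$ extends $g$ on $M$, remains complete, and satisfies $\ric_{g'}<C$ everywhere.

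The core input is that Lohkamp's procedure in \cite{Loh-Annals2, Lohkamp-JDG} is genuinely local: he produces metrics with negative Ricci via a countable sequence of elementary $C^0$-small warps, each supported in a small coordinate ball and each compatible with the metric being pre-fixed outside that ball. I would therefore formulate and establish the following relative version: if $(\tilde N,\tilde g)$ is a smooth manifold without boundary and $W\subset \tilde N$ is open with $\ric_{\tilde g}<C$ on $W$, then for any open $W'$ with $\overline{W'}\subset W$ there exists a metric $\hat g$ on $\tilde N$ with $\hat g=\tilde g$ on $W'$ and $\ric_{\hat g}<C$ everywhere. The proof exhausts $\tilde N\setminus W'$ by nested compact shells and iteratively applies Lohkamp's elementary warps within each shell, carefully arranging their supports to avoid both $W'$ and the already-treated shells. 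Applied to $(N,g_N)$ with $W$ the neighborhood of $M$ from the previous paragraph and $W'$ a slight shrinking still containing $M$, this produces the desired $g'$.

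The step I expect to be the main obstacle is the preservation of geodesic completeness. While each individual Lohkamp warp is $C^0$-small, countably many warps on the non-compact region $N\setminus V$ could in principle conspire to create finite-length divergent paths. To rule this out I would borrow the calibration idea used in the proof of Lemma \ref{lemma-extension-3}: pick the exhausting shells $\{S_j\}$ so that their $g_N$-widths are bounded below, apply a preliminary conformal rescaling in the spirit of Lemma \ref{lem_metric} to ensure each shell has $g'$-width at least $1$, and then perform Lohkamp's warps inside each shell with supports so small that the previous lower bound on widths is essentially preserved. Divergent paths then have infinite $g'$-length, and Theorem \ref{th_HR} delivers geodesic completeness of $(M',g')$.

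A subordinate technical point worth highlighting is that the preliminary conformal prefactor must not by itself push the Ricci curvature above $C$ in the shells; this can be handled by taking the factor piecewise constant on each shell, so that it affects the Ricci tensor only inside thin transition annuli, where Lohkamp's subsequent warps can be recalibrated to compensate. With these ingredients in place, one obtains a complete Riemannian extension $(M',g')=(N,g')$ of $(M,g)$ satisfying $\ric_{g'}<C$, completing the argument.
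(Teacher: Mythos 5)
Your overall strategy --- extending $M$ past $\partial M$ via Corollary~\ref{th_extension} and then lowering the Ricci curvature on the added piece through localized Lohkamp-type deformations --- is exactly the route the paper takes. However, you misdiagnose the completeness issue and, as a result, introduce substantial machinery that is not needed. In Lohkamp's construction, followed in the paper via \cite[Lemma~2.2]{Lohkamp-JDG}, the metric is modified by successive conformal factors $e^{2F_i}$ with $F_i \geq 0$, each supported in a ball of a locally finite cover. The cumulative conformal factor is therefore pointwise $\geq 1$, so the deformed metric dominates the complete extension $g_N$ everywhere, and every divergent path has $g'$-length at least its $g_N$-length, which is already infinite. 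Completeness is thus automatic, and your preliminary conformal rescaling, shell-width calibration in the style of Lemma~\ref{lem_metric}, and recalibration inside transition annuli are all superfluous. The calibration step you sketch is, moreover, delicate in its own right: under a constant conformal scaling $g\mapsto c^2 g$ the Ricci tensor is unchanged while $Cg$ becomes $Cc^2 g$, so for $C<0$ and $c>1$ the target inequality on the shell interior becomes strictly stronger, and the transition annuli would need their own quantitative bookkeeping. None of this survives in the actual proof once you observe that the Lohkamp warps are expansive; with that observation, your argument collapses to the paper's.
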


\begin{proof}
In case $M$ is compact, the result follows directly from \cite[Theorem E ]{Loh-Annals2}, while for $M$ non compact we follow \cite[Proposition 2.1]{Lohkamp-JDG}.

Let $(M',h)$ be any complete extension of the noncompact manifold with boundary $(M,g)$, whose existence is guaranteed by Corollary \ref{th_extension}. Since $\ric_{g}< C$ on $M$, then there exists an open collar neighborhood $M\subset\subset  \mathcal{W}\subset M'$ such that $\ric_{h}< C$ in $\mathcal{W}$. For the easiness of notation, throughout this proof for $r> 0$ we set $B_r=B_r^{\R^n}(0)$. Using the paracompactness of $M'$, we find a countable family of diffeomorphisms $f_i: B_6\to M'\setminus M$ such that $\{f_i(B_6)\}_{i=1}^{\infty}$ is locally finite, $\{f_i(B_4)\}_{i=1}^{\infty}$ covers $M'\setminus \mathcal{W}$ and $f_i(B_2)\subset f_{i+1}(B_4\setminus B_3)$. In particular $M'\setminus \mathcal{W}\subset \cup_{i=1}^\infty f_{i}(B_4\setminus B_2)$. 

Let $\chi\in C^\infty(\R, [0, 1])$ be a cut-off function such that $\chi|_{(-\infty,1]}\equiv 0$ and $\chi|_{[2,\infty)}\equiv 1$, and set
$$\begin{cases}F_i(z)=s_i\exp\left(-\frac{d_i}{5-\|f_i^{-1}(z)\|\chi(\|f_i^{-1}(z)\|)}\right),&\text{if}\ \|f_i^{-1}(z)\|< 5\\0&\text{otherwise},
\end{cases}$$
with positive constants $d_i,s_i$ to be chosen later. Define inductively both the metrics
$$
\begin{cases}
h^{0}=h\\
h^{(i+1)}=e^{2F_i}h^{(i)}&i>0.
\end{cases}
$$
and the constants $s_i$ and $d_i$ as follows. Suppose $s_j,d_j$ are given for $j=1,\dots,i$. In particular the metric $h^{(i)}$ is given. By \cite[ Lemma 2.2 ]{Lohkamp-JDG} there exists a $d_{i+1}$ large enough such that
$$
\frac{\ric_{h^{(i+1)}}(X,X)}{h^{(i+1)}(X,X)}-\frac{\ric_{h^{(i)}}(X,X)}{h^{(i)}(X,X)}<\begin{cases}0&on\ f_{i+1}(B_5\setminus B_4)\\-s_{i+1}e^{-d_{i+1}}& on\  f_{i+1}(B_4\setminus B_2)\end{cases}$$
holds for all $s_{i+1}>0$ and for all nonnull vector $X$ with base point in $f_{i+1}(B_5\setminus B_2)$. So we can chose $s_{i+1}$ large enough to make
$$\ric_{h^{(i+1)}}(X,X)<Ch^{(i+1)}(X,X)$$
for all $X \in T_{p'}M' \setminus\{0\}$ with $p' \in f_{i+1}(B_4\setminus B_2)$, hence $p' \in \cup_{j=1}^{i+1}f_{j}(B_4\setminus B_2)$.
Thus the limit metric $h^{(\infty)}$ satisfies the following properties:
\begin{itemize}
\item [-] it is well-defined, since the induction process is locally finite;
\item [-] it has $\ric_{h^{(\infty)}}<C$ by construction;
\item [-] it is complete because it is obtained by a conformal deformation of a complete metric with a conformal factor greater than $1$.
\end{itemize}
This completes the construction of the desired complete Riemannian extension.
\end{proof}

\begin{remark}
{\rm
Theorem E in \cite{Loh-Annals2} can be applied directly also in the non-compact case to get a negatively Ricci curved metric, which however could be incomplete.
}
\end{remark}

\begin{remark}
{\rm
Using Theorem \ref{theorem-extension} instead of Corollary \ref{th_extension} in the proof of Theorem \ref{th_Lohkamp}, one get that for  any smooth $m$-dimensional differentiable manifold $Q$ whose nonempty boundary $\partial Q$ is diffeomorphic to $\pM$, there exists a complete Riemannian extension $(M',g')$ of $(M,g)$ such that $\ric_{g'}<C$ and
$M'\setminus M$ is diffeomorphic to the interior of $Q$.}
\end{remark}

The technique used in the above proof permits to  locally lower the Ricci curvature, hence the scalar curvature, of a given manifold. Accordingly, as a bypass of the proof we get also existence of Riemannian extension preserving an upper scalar curvature bound. Note that a-priori the following result is not a direct consequence of Theorem \ref{th_Lohkamp}.

\begin{theoremA}\label{th_Lohkamp_scal}
Let $(M,g)$ be an $n$-dimensional complete Riemannian manifold with smooth nonempty boundary $\pM$. Suppose that $\scal_g< C$, for some real constant $C$. Then $M$ admits an extension $(M',g')$ satisfying $\scal_g'<C$. 
\end{theoremA}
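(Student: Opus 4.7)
The proof should follow the template of Theorem \ref{th_Lohkamp} nearly verbatim, replacing Lohkamp's local Ricci decrement by its scalar curvature analog. I would proceed as follows.

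First, apply Corollary \ref{th_extension} to obtain a complete Riemannian extension $(M',h)$ of $(M,g)$. By continuity of $\scal$ in the metric and since $\scal_{g}<C$ on $M$, there is an open collar neighborhood $M\subset\subset \mathcal{W}\subset M'$ on which $\scal_{h}<C$. Using paracompactness of $M'\setminus M$, I would select the same locally finite family of coordinate charts $f_i\colon B_6\to M'\setminus M$ used in the proof of Theorem \ref{th_Lohkamp}, so that $\{f_i(B_4)\}_{i\geq 1}$ covers $M'\setminus\mathcal{W}$ with the nesting $f_i(B_2)\subset f_{i+1}(B_4\setminus B_3)$, and introduce the same bump functions $F_i$ supported in $f_i(B_5)$ with parameters $s_i,d_i>0$ to be chosen inductively. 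The iterated conformal metrics $h^{(0)}=h$, $h^{(i+1)}=e^{2F_{i+1}}h^{(i)}$ are then defined in the obvious way.

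The key ingredient is a scalar curvature analog of Lohkamp's Lemma 2.2 from \cite{Lohkamp-JDG}. From the standard conformal transformation formula
\begin{equation*}
\scal_{e^{2F}h^{(i)}}=e^{-2F}\!\left(\scal_{h^{(i)}}-2(n-1)\Delta_{h^{(i)}}F-(n-1)(n-2)|\nabla F|^2_{h^{(i)}}\right),
\end{equation*}
I observe that the gradient term is already non-positive and hence helpful for our sign. On the active annulus $f_{i+1}(B_4\setminus B_2)$ the Laplacian $\Delta_{h^{(i)}}F_{i+1}$ can be rendered arbitrarily large positive by first choosing $d_{i+1}$ large enough and then $s_{i+1}$ large, precisely as in Lohkamp's construction; together with the uniform bound $e^{-2F_{i+1}}\leq1$ this forces $\scal_{h^{(i+1)}}<C$ there. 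On the buffer annulus $f_{i+1}(B_5\setminus B_4)$, the same estimates that Lohkamp uses yield a strictly non-positive change of scalar curvature, so any inequality $\scal_{h^{(j)}}<C$ gained in a previous step is preserved. Thus the induction can be carried out exactly as in the proof of Theorem \ref{th_Lohkamp}, and the limit $h^{(\infty)}$ is smooth (by local finiteness of the cover), satisfies $\scal_{h^{(\infty)}}<C$ on $M'$, and is complete since the cumulative conformal factor is everywhere $\geq 1$, so $h^{(\infty)}\geq h$.

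I expect the only real obstacle to be the quantitative verification of the local scalar curvature lemma in the buffer annulus $f_{i+1}(B_5\setminus B_4)$, where one has to rule out sign reversals of $\Delta_{h^{(i)}}F_{i+1}$ upsetting the bound. A clean shortcut that should work is to invoke Lohkamp's Ricci lemma pointwise: if the ratios $\ric_{h^{(i+1)}}(X,X)/h^{(i+1)}(X,X)$ all decrease by at least $s_{i+1}e^{-d_{i+1}}$ relative to the previous metric, then summing over an $h^{(i+1)}$-orthonormal frame delivers the corresponding pointwise decrease of $\scal$, up to an $n$-dependent constant that can be absorbed by enlarging $s_{i+1}$. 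This bypasses any direct analysis of the scalar curvature transformation formula and yields the desired complete Riemannian extension $(M',g')$ with $\scal_{g'}<C$.
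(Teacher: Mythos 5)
Your proposal is correct and takes essentially the same approach as the paper, which proves this result as a direct byproduct of the proof of Theorem~\ref{th_Lohkamp} by noting that the Lohkamp deformation locally lowers scalar curvature along with Ricci. Your ``clean shortcut''---summing the pointwise normalized Ricci decreases of $s_{i+1}e^{-d_{i+1}}$ over an orthonormal frame to obtain a scalar curvature decrease of $n\,s_{i+1}e^{-d_{i+1}}$, which can then be made as large as needed by enlarging $s_{i+1}$---is precisely the argument the paper is alluding to when it remarks that the technique ``permits to locally lower the Ricci curvature, hence the scalar curvature,'' while acknowledging that the statement is not an a-priori consequence of $\ric_{g'}<C$ itself.
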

\medskip

\textbf{Acknowledgement.}
\textit{The authors were partially supported by the} Gruppo Nazionale per l'Analisi Matematica, la Probabilit\`a e le loro Applicazioni (GNAMPA)\textit{. The research of the second author has been conducted as part of the project }Labex MME-DII (ANR11-LBX-0023-01).


\end{document}